\font\black=cmbx10 \font\sblack=cmbx7 \font\ssblack=cmbx5 \font\blackital=cmmib10  \skewchar\blackital='177
\font\sblackital=cmmib7 \skewchar\sblackital='177 \font\ssblackital=cmmib5 \skewchar\ssblackital='177
\font\sanss=cmss11 \font\ssanss=cmss8 %scaled 900
\font\sssanss=cmss8 scaled 600 \font\blackboard=msbm10 \font\sblackboard=msbm7 \font\ssblackboard=msbm5
\font\caligr=eusm10 \font\scaligr=eusm7 \font\sscaligr=eusm5  \font\fraktur=eufm10
\font\sfraktur=eufm7 \font\ssfraktur=eufm5
\font\bsymb=cmsy10 scaled\magstep2
\def\all#1{\setbox0=\hbox{\lower1.5pt\hbox{\bsymb
			\char"38}}\setbox1=\hbox{$_{#1}$} \box0\lower2pt\box1\;}
\def\exi#1{\setbox0=\hbox{\lower1.5pt\hbox{\bsymb \char"39}}
	\setbox1=\hbox{$_{#1}$} \box0\lower2pt\box1\;}
\def\tx#1{{\fam0\relax#1}}
\def\sss#1{{\fam\ssfam\relax#1}}
\def\hpb#1{\setbox0=\hbox{${#1}$}
	\copy0 \kern-\wd0 \kern.2pt \box0}
\def\vpb#1{\setbox0=\hbox{${#1}$}
	\copy0 \kern-\wd0 \raise.08pt \box0}
\def\pmb#1{\setbox0\hbox{${#1}$} \copy0 \kern-\wd0 \kern.2pt \box0}
\def\pmbb#1{\setbox0\hbox{${#1}$} \copy0 \kern-\wd0
	\kern.2pt \copy0 \kern-\wd0 \kern.2pt \box0}
\def\pmbbb#1{\setbox0\hbox{${#1}$} \copy0 \kern-\wd0
	\kern.2pt \copy0 \kern-\wd0 \kern.2pt
	\copy0 \kern-\wd0 \kern.2pt \box0}
\def\pmxb#1{\setbox0\hbox{${#1}$} \copy0 \kern-\wd0
	\kern.2pt \copy0 \kern-\wd0 \kern.2pt
	\copy0 \kern-\wd0 \kern.2pt \copy0 \kern-\wd0 \kern.2pt \box0}
\def\pmxbb#1{\setbox0\hbox{${#1}$} \copy0 \kern-\wd0 \kern.2pt
	\copy0 \kern-\wd0 \kern.2pt
	\copy0 \kern-\wd0 \kern.2pt \copy0 \kern-\wd0 \kern.2pt
	\copy0 \kern-\wd0 \kern.2pt \box0}
\mathchardef\za="710B  %\alpha
\mathchardef\zb="710C  %\beta
\mathchardef\zg="710D  %\gamma
\mathchardef\zd="710E  %\delta
\mathchardef\zve="710F %\epsilon
\mathchardef\zz="7110  %\zeta
\mathchardef\zh="7111  %\eta
\mathchardef\zvy="7112 %\theta
\mathchardef\zi="7113  %\iota
\mathchardef\zk="7114  %\kappa
\mathchardef\zl="7115  %\lambda
\mathchardef\zm="7116  %\mu
\mathchardef\zn="7117  %\nu
\mathchardef\zx="7118  %\xi
\mathchardef\zp="7119  %\pi
\mathchardef\zr="711A  %\rho
\mathchardef\zs="711B  %\sigma
\mathchardef\zt="711C  %\tau
\mathchardef\zu="711D  %\upsilon
\mathchardef\zvf="711E %\phi
\mathchardef\zq="711F  %\chi
\mathchardef\zc="7120  %\psi
\mathchardef\zw="7121  %\omega
\mathchardef\ze="7122  %\varepsilon
\mathchardef\zy="7123  %\vartheta
\mathchardef\zf="7124  %\varomega
\mathchardef\zvr="7125 %\varrho
\mathchardef\zvs="7126 %\varsigma
\mathchardef\zf="7127  %\varphi
\mathchardef\zG="7000  %\Gamma
\mathchardef\zD="7001  %\Delta
\mathchardef\zY="7002  %\Theta
\mathchardef\zL="7003  %\Lambda
\mathchardef\zX="7004  %\Xi
\mathchardef\zP="7005  %\Pi
\mathchardef\zS="7006  %\Sigma
\mathchardef\zU="7007  %\Upsilon
\mathchardef\zF="7008  %\Phi
\mathchardef\zW="700A  %\Omega
\newcommand{\be}{\begin{equation}}
\newcommand{\ee}{\end{equation}}
\newcommand{\ra}{\rightarrow}
\newcommand{\bea}{\begin{eqnarray}}
\newcommand{\eea}{\end{eqnarray}}
\newcommand{\beas}{\begin{eqnarray*}}
	\newcommand{\eeas}{\end{eqnarray*}}
\def\*{{\textstyle *}}
\newcommand{\R}{{\mathbb R}}
\newcommand{\T}{{\mathsf T}}
\newcommand{\we}{\wedge}
\newcommand{\pa}{\partial}
\newcommand{\ti}{\times}
\def\ran{\rangle}
\def\Sec{\operatorname{Sec}}
\def\la{\langle}
\def\ran{\rangle}
\def\sT{{\sss T}}
\def\xd{\tx{d}}
\def\xi{\tx{i}}
\def\rel{{-\!\!\!-\!\!\rhd}}
\def\Graph{\operatorname{graph}}
\def\pr{\operatorname{pr}}
\def\Id{\operatorname{Id}}
\def\graph{\operatorname{graph}}
\def\rel{{-\!\!\!-\!\!\rhd}}
\newcommand{\bra}[1]{\ensuremath{\langle #1 |}}
\newcommand{\nm}[1]{\ensuremath{\Vert #1 \Vert}}
\newtheorem{theorem}{Theorem}[section]
\newtheorem{proposition}[theorem]{Proposition}
\newtheorem{lemma}[theorem]{Lemma}
\newtheorem{conjecture}[theorem]{Conjecture}
\theoremstyle{definition}
\newtheorem{example}[theorem]{Example}
\newtheorem{definition}[theorem]{Definition}
\newtheorem{remark}[theorem]{Remark}
\newtheorem{question}[theorem]{Question}%[section]
\newcommand{\Gr}{\textnormal{Gr}}
\def\cF{\mathcal{F}}
\newcommand{\lcf}{\lbrack\! \lbrack}
\newcommand{\rcf}{\rbrack\! \rbrack}
\newcommand{\lvec}[1]{\overleftarrow{#1}}
\newcommand{\rvec}[1]{\overrightarrow{#1}}
\newcommand{\Oc}{{\mathbb O}}
\begin{document}
	\title{\bf Nonassociative analogs  \\ of Lie groupoids \thanks{Research  founded by the  Polish National Science Centre grant
			under the contract number 2016/22/M/ST1/00542.
	}}
	\date{}
	\author{\\ Janusz Grabowski\\ Zohreh Ravanpak
		\\ \\
		{\it Institute of Mathematics}\\
		{\it Polish Academy of Sciences}
	}
	%\author{Janusz Grabowski}
	\maketitle
	
	\begin{abstract} We introduce nonassociative geometric objects generalising naturally Lie groupoids and called (smooth) \emph{quasiloopoids} and \emph{loopoids}.  We prove that the tangent bundles of smooth loopoids are canonically smooth loopoids again (it is nontrivial in the case of loopoids). We show also that this is not true if the cotangent bundles are concerned. After providing a few natural constructions, we show how the Lie-like functor associates with loopoids skew-algebroids and almost Lie algebroids and how discrete mechanics on Lie groupoids can be reformulated in the nonassociative case.
	\end{abstract}
	\date{}
	%\maketitle
	
	\section{Introduction}
	The r\^ole of the theory of Lie groups and Lie algebras in mathematics and theoretical physics cannot be overestimated.
	It is a fundamental tool for describing symmetries, starting from smooth geometry through the theory of smooth equations, and ending in field and quantum theories.
	
	The classical Lie theory has been extended to \emph{Lie groupoids} and \emph{Lie algebroids} (for the survey and literature see the monograph \cite{Ma} or \cite{Mei}) which turned out to be even more useful concepts if the geometry is concerned.
	Especially, symplectic groupoids as `non-commutative phase spaces', integrating Poisson structures and introduced by Weinstein, Karasev and Zakrzewski \cite{We,Zak}, give rise to numerous applications and fundamental geometric problems. This generalized Lie theory combined with symplectic geometry is one of the two main streams in geometry.
	
	The recent development has shown the importance of nonassociative algebraic
	structures, such as \emph{quasigroups} and \emph{loops}. For instance, it is possible to say that
	nonassociativity is the algebraic equivalent of the smooth geometric concept of curvature. The nonassociative generalizations of Lie groups are smooth loops. It is clear what a smooth loop is, on a manifold $G$ we have a smooth multiplication (no associativity is assumed) with unit $e\in G$ such that the left and right translations are diffeomorphisms. In \cite{GR} we developed discrete mechanics on smooth loops, but there is still a need to consider nonassociative objects closer to these of Lie groupoids; we will call them smooth \emph{(quasi)loopoids}. So, \emph{loopoids}, or their weaker version \emph{quasiloopoids}, will be related to Lie groupoids as smooth loops are related to Lie groups.
	
 We show that a Lie-like functor can be defined for quasiloopoids and loopoids and it will lead to new applications. The theory of loopoids has to be developed from scratch. The rough idea of a loopoid is clear, however the precise definition has to be an object of a deeper study. The definition should be based on important examples and determined by conceptual completeness. We shall use methods of differential geometry and algebra. Applications will be related to Lagrangian discrete mechanics (geometric integrators) and information geometry (statistical manifolds).
	
Note that here by a \emph{groupoid} we understand a \emph{Brandt groupoid}, i.e. a small category in which every morphism is an isomorphism, and not as an object which is called \emph{magma} in algebra.

 In the case of genuine groupoids, since the multiplication is only partially defined, the situation is more complicated. So the axioms of a loopoid must be essentially reformulated.
	
		A convenient way of thinking about groupoids is by using group axioms with the difference that all maps are replaced by relations, like it has been done by Zakrzewski \cite{Zak}. In particular, the unity is a relation $\ze:\{ e\}\rel\ G$\,, associating to a point $e$ a subset $M=\ze(e)\subset G$\,, the set of units. Shortly,
a smooth loopoid is a smooth manifold $G$ with surjective submersions $\za,\zb:G\to M$\,, on the set of units $M$\,, and a partially defined smooth multiplication $$m:G\ti G\supset G^{(2)}\ni (g,h)\mapsto gh\in G$$ where $$G^{(2)}=\{(g,h)\in G\ti G\,|\zb(g)=\za(h)\}\,,$$  such that the composition satisfies
$$\za(g)g=g\,,\ h\zb(h)=h\,,\ \za(gh)=\za(g)\,,\ \text{and}\ \zb(gh)=\zb(h)$$
(this reduces to the existence of an identity element for smooth loops). Note that no associativity or invertibility is assumed. Moreover, the canonical translations
$$l_g:\cF^\za(\zb(g))\to\cF^\za(\za(g))\,, x\mapsto gx\,,\quad\text{and}\quad r_h:\cF^\zb(\za(h))\to\cF^\zb(\zb(h))\,,
x\mapsto xh\,,$$
where $\cF^\za(x)=\za^{-1}(x)$ and $\cF^\zb(x)=\zb^{-1}(x)$ denote the corresponding $\za$ and $\zb$-fibers, are supposed to be diffeomorphisms. Again, for smooth loops, the latter conditions just mean that the left and the right translations are bijections. In this sense, a smooth loopoid over one point is a smooth loop.

We prove that for a smooth loopoid $\za,\zb:G\to M$ the tangent bundle $\sT\za,\sT\zb:\sT G\to\sT M$ is canonically a smooth loopoid with the multiplication $\sT m$\,. Contrary to the case of Lie groupoids this is a non-trivial theorem (we do not have inversion). Also the cotangent bundle to a Lie groupoid is canonically a Lie groupoid. We will show that this cannot be extend to the case smooth loopoids.
	
One of the aims of this paper is to extend the Lie functor to categories of nonassociative objects, like quasiloopoids and loopoids. Infinitesimal counterparts of Lie groupoids are Lie algebroids and the corresponding Lie theory is well established (cf. \cite{Ma}). As the infinitesimal version of associativity is the Jacobi identity, the infinitesimal counterpart of loopoids will not satisfy the latter.

Note that in the literature there are already natural various generalizations of Lie algebroids, e.g. \emph{skew algebroids}, \emph{almost Lie algebroids}, or \emph{Dirac algebroids} \cite{GG,GG1,GJ,GU1,GU2}, where no Jacobi identity is assumed. For instance, the skew algebroid formalism is very useful in describing the geometry of nonholonomic systems \cite{GLMM}.
	
As the Jacobi identity is an infinitesimal version of associativity, the `integrated version' of these generalizations must be understood as `nonassociative Lie groupoids`. The development of the corresponding theory is still a challenging task. This is a natural evolution of mathematical studies that not only we go deeper in our understanding of the known structures, but also we start to penetrate new lands of more general structures, hoping to extend the old results and discover new mathematical phenomena. Note that the nonassociativity is actually a form of noncommutativity: the left translations do not commute with the right translations.	

We want to stress that our motivation comes from discrete mechanics, where Lie groupoids have been recently used for a geometric formulation of the Lagrangian formalism \cite{FZ,IMMM,IMMP,MMM,MMS,MW,Sim,Stern,weinstein}. We expect that a natural application of smooth loopoids will be that in discrete Lagrangian mechanics. Another application can be found in information geometry. We have shown in \cite{GGKM} that a natural framework for information geometry is provided by Lie groupoids and Lie algebroids. We believe this can be, at least partially, generalized to a theory on smooth quasiloopoids and loopoids.

	A nonassociative algebra, e.g. the theory of algebraic loops, is already quite well developed.
		On the other way, many authors limit their investigation to loops that satisfy various other structural conditions. Common examples of such notions are the \emph{left-} and \emph{right-Bol loops}, the \emph{Moufang loops} (which are both left-Bol loops and right-Bol loops simultaneously), the \emph{generalized Bol loop}, etc. Especially the theory of analytic Moufang loops and Mal'cev algebras \cite{mal} is quite close to the Lie theory with respect to a generalized Lie functor \cite{kuz,nagy}. For general analytical loops the tangent algebra has a structure of an \emph{Akivis algebra} \cite{HS}.
	
	The problem is that the axioms for smooth Moufang loops are quite restrictive and these algebraic concepts do not fit very well to the needs of Lie groupoid geometry. One wants to have an object similar to a Lie groupoid, so with a product which is only partially defined and nonassociative.

	The question is, how far we can go with known constructions from the Lie theory in the nonassociative case. Investigating it will show what aspects are specific for the structures which are associative (or satisfying the Jacobi identity) and what we can get by assuming weaker axioms.

In the case of smooth loops, the theory resembles to some extent the theory of Lie groups and is pretty well-developed (see e.g. the book \cite{nagy1}). Of course, nonassociative  generalizations of Lie groupoids
provide much more problems. Some of them we discuss in this paper.
	
	Contrary to the case of smooth loops (e.g. \cite{FN,mal,kuz,Ne,NM, Sab2}), the literature in this subject, oriented on `nonassociative Lie groupoids' is not very extensive. Besides some aspects contained in Sabinin's monograph \cite{Sab}, we can indicate our short introductory note \cite{Gr} and the recent paper \cite{Alonso} (which appeared after the first version of our paper was prepared). In the latter paper, the notion of a \emph{quasigroupoid} is introduced. There are obvious common points with our concept of a loopoid (e.g. the source and target maps), however, the authors consider only purely algebraic cases (no differential geometry) with the emphasis on the cases when the sets of units are finite.

Note that the term \emph{loopoid} has already appeared in a paper by Kinyon \cite{Kin} in a similar context. The motivating example, however, built as an object `integrating' the Courant bracket on $\T M\oplus_M\T^*M$\,, uses the group of diffeomorphisms of the manifold $M$ as integrating the Lie algebra of vector fields on $M$, not the pair groupoid $M\ti M$ as `integrating' the Lie algebroid $\sT M$\,.

	\section{Quasigroups and loops}
	
	The remarkable development of smooth quasigroups and loops theory since the pioneering
	works of Mal'cev in 1955 (see \cite{mal}) was presented by Lev V. Sabinin in \cite{Sab}, where the large bibliography on the subject
	is given. We refer also to the books \cite{Bel,Pfl} and the survey articles \cite{Sab1,Smi} if terms and
	concepts from nonassociative algebra are concerned.
	
	Let us recall that a \emph{quasigroup} is an algebraic structure $<G,\cdot>$ with a binary operation (written usually as juxtaposition, $a\cdot b=ab$) such
	that
	$r_g: x \mapsto xg$ (the \emph{right translation}) and
	$l_g: x \mapsto gx$ (the \emph{left translation})
	are permutations of $G$, equivalently, in which the equations
	$ya = b$ and $ax = b$ are solvable uniquely for $x$ and $y$ respectively. A \emph{loop} is a quasigroup with a two-sided identity element,
	$e$, $e x=x e=x$. A loop $<G,\cdot , e>$ with identity $e$ is called an \emph{inverse
		loop} (or \emph{I.P. loop}) if to each element $a$ in G there corresponds an element $a^{-1}$ in $G$ such that
$$a^{-1}(a b) = (b a) a^{-1} =b\,,$$ for all $b\in G$\,.
	It can be then easily shown that in an inverse loop $<G, \cdot, {}^{-1}, e>$ we have,
	for all $a, b \in G$\,,
	$$aa^{-1} = a^{-1}a = e\,,\quad (a^{-1})^{-1} = a\,,\quad \text{and}\quad  (ab)^{-1} = b^{-1} a^{-1}\,.$$
	Indeed, the first two identities are trivial and
	$$[x^{-1}(xy)=y]\,\Rightarrow [x^{-1}=y(xy)^{-1}]\, \Rightarrow [y^{-1}x^{-1}=(xy)^{-1}]\,.$$
	Mimicking the Lie theory, one can define for any smooth loop, a `Lie functor' associating  with the loop a \emph{skew algebra}, i.e. a real vector space with a bilinear skew operation \cite{GR,Sab,Sab1}.
	
	\begin{example}\label{e1} The octonions $\mathbb{O}$ are the noncommutative non-associative algebra which is one of the four division algebras that exist over the real numbers. Every octonion can be expressed in terms of a natural basis $\{e_0,e_1,\cdot,\cdot,\cdot,e_7\}$ where $e_0=1$ represents the identity element and the imaginary octonion units $e_i$, $\{i=1,...,7\}$ satisfy the multiplication rule  $e_i e_j=-\zd_{i}^{j}+f_{ijk}e_k$\,, where $\zd_{i}^{j}$ is the Kronecker's delta and $f_{ijk}$'s are completely anti-symmetric structure constants which read as
			\[
			f_{123} =  f_{147} =  f_{165} =  f_{246 }=  f_{257} =  f_{354} =  f_{367} = 1\,.
			\]
			The multiplication is subject to the relations
			\[
			\forall i\ne 0\quad [e_i^2=-1]\,, \qquad
			e_ie_j=-e_je_i\,,\quad \mbox{for} \quad i\ne j\ne 0\,.
			\]
			and the following multiplication table.
		
		\begin{center}
			\begin{tabular}{| l || l | l | l | l | l | l | l | l | l | p{15mm} }
				
				\hline\hline
				\vspace{-1mm}

				%%%%%%%%%%%%%%%%%%%%%%%%%%%%%%
				{\scriptsize $e_ie_j$}&{\scriptsize $~e_0$}
				& {\scriptsize $~e_1$ }& {\scriptsize $~e_2$ }& {\scriptsize $~e_3$ }& {\scriptsize $~e_4$ }& {\scriptsize $~e_5$ }& {\scriptsize $~e_6$ }& {\scriptsize $~e_7$ }  \smallskip\\
				\hline\hline
				
				%%%%%%%%%%%%%%%%%%%%%%%%%%%%%%%%%%%%%%%%%%%%%%%
				{\scriptsize $~e_0$ }&{\scriptsize $~e_0$}
				& {\scriptsize $~e_1$ }& {\scriptsize $~e_2$ }& {\scriptsize $~e_3$ }& {\scriptsize $~e_4$ }& {\scriptsize $~e_5$ }& {\scriptsize $~e_6$ }& {\scriptsize $~e_7$ } \\
				\hline
				%%%%%%%%%%%%%%%%%%%%%%%%%%%%%%%%%%%%%%%%%%%%%%%
				{\scriptsize $~e_1$ }&{\scriptsize $~e_1$}
				& {\scriptsize $-e_0$ }& {\scriptsize $~e_3$ }& {\scriptsize $-e_2$ }& {\scriptsize $~e_5$ }& {\scriptsize $-e_4$ }& {\scriptsize $-e_7$ }& {\scriptsize $~e_6$ } \\
				\hline
				%%%%%%%%%%%%%%%%%%%%%%%%%%%%%%%%%%%%%%%%%%%%%%%
				{\scriptsize $~e_2$}&{\scriptsize $~e_2$}
				& {\scriptsize $-e_3$ }& {\scriptsize $-e_0$ }& {\scriptsize $~e_1$ }& {\scriptsize $~e_6$ }& {\scriptsize $~e_7$ }& {\scriptsize $-e_4$ }& {\scriptsize $-e_5$ }  \\
				\hline
				%%%%%%%%%%%%%%%%%%%%%%%%%%%%%%%%%%%%%%%%%%%%%%%
				{\scriptsize $~e_3$}&{\scriptsize $~e_3$}
				& {\scriptsize $~e_2$ }& {\scriptsize $-e_1$ }& {\scriptsize $-e_0$ }& {\scriptsize $~e_7$ }& {\scriptsize $-e_6$ }& {\scriptsize $~e_5$ }& {\scriptsize $-e_4$ }  \\
				\hline
				%%%%%%%%%%%%%%%%%%%%%%%%%%%%%%%%%%%%%%%%%%%%%%%
				{\scriptsize $~e_4$ }&{\scriptsize $~e_4$}
				& {\scriptsize $-e_5$ }& {\scriptsize $-e_6$ }& {\scriptsize $-e_7$ }& {\scriptsize $-e_0$ }& {\scriptsize $~e_1$ }& {\scriptsize $~e_2$ }& {\scriptsize $~e_3$ }  \\
				\hline
				%%%%%%%%%%%%%%%%%%%%%%%%%%%%%%%%%%%%%%%%%%%%%%%
				{\scriptsize$~e_5$ }&{\scriptsize $e_5$}
				& {\scriptsize $~e_4$ }& {\scriptsize $-e_7$ }& {\scriptsize $~e_6$ }& {\scriptsize $-e_1$ }& {\scriptsize $-e_0$ }& {\scriptsize $-e_3$ }& {\scriptsize $~e_2$ }  \\
				\hline
				%%%%%%%%%%%%%%%%%%%%%%%%%%%%%%%%%%%%%%%%%%%%%%%
				{\scriptsize$~e_6$ }&{\scriptsize $~e_6$}
				& {\scriptsize $~e_7$ }& {\scriptsize $~e_4$ }& {\scriptsize $-e_5$ }& {\scriptsize $-e_2$ }& {\scriptsize $~e_3$ }& {\scriptsize $-e_0$ }& {\scriptsize $-e_1$ }  \\
				\hline
				%%%%%%%%%%%%%%%%%%%%%%%%%%%%%%%%%%%%%%%%%%%%%%%
				{\scriptsize$~e_7$ }&{\scriptsize $~e_7$}
				& {\scriptsize $-e_6$ }& {\scriptsize $~e_5$ }& {\scriptsize $~e_4$ }& {\scriptsize $-e_3$ }& {\scriptsize $-e_2$ }& {\scriptsize $~e_1$ }& {\scriptsize $-e_0$ }  \\
				\hline
				%%%%%%%%%%%%%%%%%%%%%%%%%%%%%%%%%%%%%%%%%%%%%%%
				
			\end{tabular}
		\end{center}
		
\noindent The associator $[g,h,k] =(gh)k - g(hk)$ of three octonions does not vanish in general but octonions satisfy a weak form of associativity known as alternativity, namely $[g,h,g]=0$.
		The octonions are a generalization of the complex numbers, with seven imaginary units, so octonionic conjugation is given by reversing the sign of the imaginary basis units. Conjugation is an involution of $\mathbb{O}$  satisfying $(gh)^* = h^* g^*$. The inner product on $\mathbb{O}$ is inherited from $\mathbb R^8$ and can be rewritten
		$$
		\left\langle g,h\right\rangle = \frac{(gh^* + hg^*)}{2} = \frac{(h^*g + g^*h)}{2}\in\R\,,
		$$
		and the norm of an octonion is just $\|g \| ^2 = gg^*$ which satisfies the defining property of a normed division algebra, namely $\|gh\| = \| g\| \| h \|$\,.
		The scalar product is invariant with respect to the multiplication: $\langle ag,ah\rangle=\langle g,h\rangle$ for $a\ne 0$\,.
		
\noindent Every nonzero octonion  $g\in \mathbb{O}$ has an inverse $g^{-1}=\frac{g^*}{\| g \| ^2}$, such that
	\be\label{inverse}
		gg^{-1}=g^{-1}g=1\,,	
		\ee
		which makes the set of invertible octonions into an inverse loop with respect to the octonion multiplication.
		We remark that the inverse in this loop is a genuine one, i.e. it satisfies the \emph{Inverse Property}:
	\[
		g(g^{-1}h)=g^{-1}(gh)=h\,, \quad \forall g,h\in \mathbb{O}\,,
		\]
		which is stronger than the standard property (\ref{inverse}) for non-associative algebra. 	
		Actually, the set $\Oc^\ti$ of invertible octonions is a smooth Moufang loop under octonion multiplication.

	\end{example}
	
	\begin{example}\label{ex1}	
		The set of all automorphisms of the algebra $\mathbb{O}$\,, that is the set of invertible linear transformations $A\in Aut (\mathbb{O})$\,, forms a Lie group called $G_2$ which is the smallest of the exceptional Lie groups. We will show that the semidirect product $\mathbb{O}^\ti \ltimes G_2 $ is an inverse loop under the multiplication
		\[
		(g,A)\bullet(h,B)=(g A(h), A\circ B)\,,
		\]
		with identity $(1,\Id)$ and inverse $(g,A)^{-1}=(A^{-1}(g^{-1}),A^{-1})$\,. What needs to be checked is the following inverse property,
		\[
		\begin{array}{rcl}
		(g,A)^{-1}\bullet \left( (g,A)\bullet (h,B)\right) &=&(A^{-1}(g^{-1}),A^{-1})\bullet (g\cdot A(h), A\circ B)\\[3pt]
		&=& (A^{-1}(g^{-1})\cdot A^{-1}(g\cdot A(h)),B)=(h,B)\,.
		\end{array}
		\]
		Here we use the fact that $A^{-1}(g^{-1}\cdot g)=A^{-1}(g^{-1})\cdot A^{-1}(g)=1$\,. Similarly,
		\[
		\left( (g,A)\bullet (h,B)\right)\bullet (h,B)^{-1}=(g,A)\,.
		\]
		Of course, because $\Oc^\ti$ is not associative the above smooth loop is not a Lie group.
		
	\end{example}
	Some information about smooth loops and octonion bundles can be found in \cite{Gri,Gri1}.
	
	\medskip	
	Loops that have only one-sided inverse are called \emph{left inverse loops} (resp. \emph{right inverse loops}). Left inverse loops appear naturally as algebraic structures
	on \emph{transversals} or \emph{sections} of a subgroup in a group. In this case the homogeneous structures are equipped with a binary operation. This observation, going
	back to R. Baer \cite{Bae} (cf. also \cite{Fo,KW}), lies at the heart of much current research on loops, also in smooth geometry and analysis.
	
	\begin{example}\label{tr} Let $G$ be a group with the unit $e$\,, $H$ be a subgroup, and $S\subset G$ be a left transversal to $H$ in $G$\,, i.e., $S$ contains exactly one point from each coset $gH$ in $G/H$. This means that any element $g\in G$ has a unique decomposition $g=sh$\,, where $s\in S$ and $h\in H$\,. This produces an identification $G=S\times H$ of sets. Let $p_S:G\to S$ be the projection on $S$ determined by this identification. If we assume that $e\in S$\,, then $S$ with the multiplication $$s\circ s'=p_S(ss')$$ and $e$ as the unit is a left inverse loop.
		
		Indeed, as $e\circ s=s\circ e=p_S(s)=s$\,, $e$ is the unit for this multiplication. For $a,b\in S$, there is $h\in H$ such that $p_S(a^{-1}b)=a^{-1}bh$\,. Hence,
		$$p_S(ap_S(a^{-1}b))=p_S(aa^{-1}bh)=p_S(bh)=b\,,$$
		that shows that $p_S(a^{-1}b)$ is a solution of the equation $a\circ x=b$\,. If $c,c'$ are two such solutions, then $p_S(ac)=p_S(ac')$\,, so there is $h\in H$ such that $ac=ac'h$\,, so $c=c'h$ and $c=c'$\,, since $S$ is transversal to $H$.
	\end{example}

	We will be interested with smooth (or analytic) structures on (inverse) loops. In this case $G$ is a smooth (analytic) manifold in which the multiplication (inverse) is a smooth (analytic) map.
	
	Exactly like it is done for Lie groups  we can define the corresponding local objects, \emph{smooth local loops}, in an obvious way.
	\begin{example}\label{e11}
		Let $G$ be a local smooth loop defined in a neighbourhood of $0$ in $\R^n$ with the product
		$$x\bullet y=(x+y+\frac{1}{2}[x,y])\,,$$
		where $[x,y]$ is a skew bilinear operation on $\R^n$.
		The unit is $e=0$ and the translations are local diffeomorphisms:
	$$l_x(y)=x+y+\frac{1}{2}[x,y]\,, \quad r_y(x)=x+y+\frac{1}{2}[x,y]\,.$$
		Every element $x$ possesses $-x$ such that $x\bullet(-x)=0$ but in general there is no inverse and no Inverse Property because
		$$(-x)\bullet (x\bullet y)=y-\frac{1}{4}[x,[x,y]]\,.$$
	If $[x,[x,y]]=0\,,$ then we deal with a nilpotent Lie group.
		
	\end{example}
	
	\section{Quasiloopoids and loopoids}

	\begin{question}
		We will look for the `right' concept of a smooth loopoid $G$ such that:
		\begin{itemize}
			\item it is in two ways a fibration over a submanifold  $M$ of `units", $\za,\zb:G\ra M$ which determine the set of composable elements;
			\item the translations are sufficiently regular;
			\item a smooth loopoid over one point is a smooth loop;
		\item it is as close as possible to the concept of a (Lie) groupoid;
			\item it has sufficiently rich variety of natural examples.
		\end{itemize}
	\end{question}
	
\noindent Our concept of a structure satisfying the first three requirements is the following.
We will consider weaker structures which we call (smooth) \emph{quasiloopoids} and the corresponding linear objects: \emph{skew algebroids}, with different versions of invertibility property. The first generalization of the concept of a Lie groupoid to nonassociative case is a \emph{smooth quasiloopoid}.
	\begin{definition} A \emph{smooth quasiloopoid} is a manifold $G$ equipped with surjective	
		submersions (target and source maps)  $\za,\zb:G\to M$ of $G$ onto the submanifold of units $M\subset G$ such that the product $m(g,h)=gh$ is defined if and only if $\zb(g)=\za(h)$\,. Moreover the multiplication $m:G^{(2)}\to G$\,, where $$G^{(2)}=\{(g,h)\in G\ti G\,|\, \zb(g)=\za(h)\}$$ is a smooth closed embedded submanifold of $G\ti G$\,. We also assume that $g\zb(g)=g$ and $\za(h)h=h$, i.e., elements of $M$ are units, and that the left and right translations maps
$$ l_g:\cF^\za(\zb(g))\to G\,,\ l_g(h)=gh\,, \quad\text{and}\quad r_h:\cF^\zb(\za(h))\to G\,,\ r_h(g)=gh\,,
$$
are injective immersions. By $\cF^\za(a)=\{ h\in G\,| \, \za(h)=a\}$ and $\cF^\zb(a)=\{ g\in G\,| \, \zb(g)=a\}$ we denote the target and source fibers ($\za$- and $\zb$-fibers), respectively. Sometimes we denote the obvious embedding $M\subset G$ as $\ze:M\to G$\,.
	\end{definition}
	
Now, let us assume that a quasiloopoid $G$ over $M$ with a partial multiplication $m$ and projections $\za,\zb:G\to M$\,, satisfies a very weak associativity condition, hereafter called \emph{unities associativity}:
	\be\label{ua} (xy)z=x(yz)\ \text{if one of}\ x,y,z\ \text{is a unit}\  (\text{i.e., belongs to}\ M)\,.
	\ee
	The above condition has to be understood as follows: if one side of equation (\ref{ua}) makes sense, the other makes sense and we have equality.
	The following proposition shows that the condition of unities associativity for a quasiloopoid over $M$ is rather strong and implies that the \emph{anchor map} $(\za,\zb):G\to M\ti M$ has nice properties, similar to these for Lie groupoids.
	\begin{proposition}
A quasiloopoid $G$ over $M$ satisfies the unities associativity condition if and only if		
		$$
		(\za,\zb):G\to M\ti M
		$$
		is a quasiloopoid morphism into the pair groupoid $M\ti M$, i.e.
		$$
		\za(gh)=\za(g)\ \text{and}\ \zb(gh)=\zb(h)\,.
		$$
The unities associativity assumption implies that the left and right translation maps can be viewed as injective immersions
\be\label{aga}{l}_{g}:\cF ^{\alpha}(\beta (g))\rightarrow \cF ^{\alpha }(\alpha  (g))\,, \quad
r_{h}:\cF ^{\beta }(\alpha(h))\rightarrow \cF ^{\beta }(\beta(h))\,.
\ee
	\end{proposition}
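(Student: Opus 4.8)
The plan is to treat the biconditional as an equivalence between unities associativity and the pair of identities $\za(gh)=\za(g)$, $\zb(gh)=\zb(h)$; indeed, being a morphism into the pair groupoid $M\ti M$ (with $(a,b)(b,c)=(a,c)$) means precisely that the target of a product is the target of the first factor and the source is the source of the second, which is exactly these two equations. Before either direction I would record a preliminary normalization: $\za|_M=\zb|_M=\Id_M$. This is forced by the unit axioms themselves, since the axiom $\za(h)h=h$ presupposes that $\za(h)\cdot h$ is a legitimate product, i.e. $\zb(\za(h))=\za(h)$; as $\za$ is surjective onto $M$ this gives $\zb(u)=u$ for all $u\in M$, and symmetrically $g\zb(g)=g$ gives $\za|_M=\Id_M$. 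These identities are what let me evaluate source and target on units throughout.

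For the forward direction I would exploit that unities associativity is not merely an equality but also transfers definedness from one bracketing to the other. Writing $gh=(\za(g)\,g)\,h$ (valid because $\za(g)g=g$), the left bracketing is defined since it equals the defined product $gh$; because $\za(g)\in M$, unities associativity then guarantees that $\za(g)(gh)$ is defined and equal to $gh$. But $\za(g)(gh)$ can be defined only if $\zb(\za(g))=\za(gh)$, and by the preliminary step $\zb(\za(g))=\za(g)$, whence $\za(gh)=\za(g)$. The identity $\zb(gh)=\zb(h)$ follows in the mirror-image way from $gh=g\,(h\,\zb(h))=(gh)\,\zb(h)$, whose right-hand side is defined only if $\zb(gh)=\za(\zb(h))=\zb(h)$.

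For the reverse direction, assuming the two morphism identities, I would verify $(xy)z=x(yz)$ by cases according to which of $x,y,z$ lies in $M$. In each case both bracketings collapse to a single product via the unit axioms: if $x=u\in M$ then $uy=y$ (when $u=\za(y)$) and $u(yz)=yz$ (using $\za(yz)=\za(y)$ to see $u=\za(yz)$), so both sides equal $yz$; the cases $z\in M$ and $y\in M$ are handled the same way, using $\zb(xy)=\zb(y)$ and the two unit axioms. I expect the only delicate point, and the real obstacle of the whole proof, to be the bookkeeping of \emph{definedness}: one must check that in each case the composability constraints governing the two bracketings reduce to the \emph{same} pair of equations (e.g. $u=\za(y)$ and $\zb(y)=\za(z)$ in the first case), and this is exactly where the morphism identities are indispensable, since they compute $\za,\zb$ of the inner products.

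Finally, the statement about translations is an immediate corollary. For $h\in\cF^\za(\zb(g))$ the product $gh$ is defined and $\za(gh)=\za(g)$, so $l_g(h)=gh\in\cF^\za(\za(g))$; dually $r_h(g)=gh\in\cF^\zb(\zb(h))$ since $\zb(gh)=\zb(h)$. Because $\za$ and $\zb$ are submersions, these fibers are embedded submanifolds of $G$, so the maps $l_g$ and $r_h$, already known to be injective immersions into $G$ and now seen to take values in the indicated fibers, corestrict to injective immersions into $\cF^\za(\za(g))$ and $\cF^\zb(\zb(h))$ respectively.
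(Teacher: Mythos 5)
Your proof is correct and follows essentially the same route as the paper's: the forward direction applies unities associativity to $(\za(g)g)h=\za(g)(gh)$ and extracts $\za(gh)=\za(g)$ from the composability constraint of the right-hand product (mirror argument for $\zb$), while the converse is the same case analysis on which factor is a unit, with the morphism identities used to match the definedness conditions of the two bracketings. Your explicit preliminary normalization $\za|_M=\zb|_M=\Id_M$ is only implicit in the paper (used there as $e=\zb(e)$), but that is a presentational refinement rather than a different argument.
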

	\begin{proof}	
	
	According to (\ref{ua}), for  $(g,h)\in G^{(2)}$ we have
		$$\za(gh)(gh)=gh=(\za(g)g)h=\za(g)(gh)\,,$$
		so $\za(gh)=\za(g)$. Analogously we can prove $\zb(gh)=\zb(h)$\,.
	
\medskip\noindent
		Conversely, let $e\in M$ be such that $e(gh)$ makes sense. Then,
		$$e=\zb(e)=\za(gh)=\za(g)\,.$$
		Hence
		\be\label{a}e(gh)=\za(gh)(gh)=gh=(\za(g)g)h=(eg)h\,.
		\ee
		If this is $(eg)h$ that makes sense, then
		$$e=\zb(e)=\za(g)=\za(gh)$$ and we have (\ref{a}) again.
		Similarly we prove $(ge)h=g(eh)$ and $(gh)e=g(he)$\,.

\medskip\noindent As $\za(gh)=\za(g)$ and $\zb(gh)=\zb(h)$\,, it is easy to see that the images of $l_g$ and $r_h$ lie in $\cF ^{\alpha }(\alpha  (g))$ and $\cF ^{\beta }(\zb(h))$\,, respectively.
	
\end{proof}
	\begin{definition}
		A quasiloopoid satisfying the unities associativity assumption  and such that the maps (\ref{aga}) are diffeomorphisms will be called a \emph{loopoid}. A (quasi)loopoid $G$ over the manifold $M$ will be denoted by
	$G \rightrightarrows M$\,.
	\end{definition}
	\begin{remark}
		In a loop, the multiplication is globally defined, so the unity associativity is always satisfied
		by properties of the unity element. In this sense, smooth loops are smooth loopoids over one point. It is not true for quasigroups, and they are not quasiloopoids over a point. In fact the conditions $g\zb(g)=g$ and $\za(h)h=h$ require the existence of the two sided identity.
	\end{remark}
\begin{remark}
			Smooth Moufang loops, considered by Mal’cev \cite{mal}, are particular cases of loops which satisfy any of the three following equivalent conditions
		\[
			((ax)a)y = a(x(ay))\,,\quad	((xa)y)a = x(a(ya))\,, \quad(ax)(ya)= (a(xy))a\,.
			\]
			They can be considered as loopoids over a single point, because the above multiplications are clearly well defined when the source and target maps are the same (projections onto the unit).
			
Note that when we speak about a general loopoid, the above multiplications are generally not well defined, so loopoids do not seem to be proper generalizations of Moufang loops except for smooth Moufang loops themselves.
	\end{remark}
	The definition of a morphisms $\Phi:G_1\to G_2$ between two (quasi)loopoids is completely clear: we impose
	that $\Phi(xy)=\Phi(x)\Phi(y)$. In particular, both sides have the meaning simultaneously. It follows that $\Phi$ induces a map of the sets of units $\phi:M_1\to M_2$ and intertwines the target and source maps. In this way we obtain two categories for which, as we will see, one can construct a `Lie functor' to the categories of skew-algebroids and almost-Lie algebroids.  If $\Phi$ is inclusion of the submanifold, we call $G_1$ a \emph{(quasi)subloopoid} of $G_2$.
	%\end{definition}
	
	The elements of $G ^{(2)}$ are referred to as \emph{composable} (or \emph{admissible}) pairs. Note that the full information about the loopoid is contained in the
	\emph{multiplication relation} which is a subset $G^{(3)}\subset G\ti G\ti G$,
	\be\label{mr}
	G^{(3)}=\left\{(x,y,z)\in G\ti G\ti G\, |\ (x,y)\in G^{(2)}\ \text{and}\ z=xy\right\}\,.
	\ee Of course, for loops the multiplication is globally defined and $\za,\zb:G\to\{e\}$\,, so that $G^{(2)}=G^2=G\ti G$\,.
	On a (quasi)loopoid we can consider inverse properties.
	
	\begin{definition} A smooth \emph{inverse} in a quasiloopoid $G \rightrightarrows M$ is an
		inversion mapping (diffeomorphism) $\iota :G \rightarrow G$\,,
		satisfying the following properties (where we write  $g^{-1}$ for $\iota (g)$)\,:
		$$
		g^{-1}(gh) =h\quad \text{and}\quad (hg)g^{-1}=h\,.$$
		We say also the $G$ has the \emph{Inverse Property} (I.P.).
	\end{definition}
	\begin{theorem}
		A quasiloopoid with the \emph{Inverse Property} is a loopoid.
	\end{theorem}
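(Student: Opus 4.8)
The plan is to check the two properties that distinguish a loopoid from a general quasiloopoid: that $G$ satisfies the unities associativity condition (\ref{ua}), and that the translation maps (\ref{aga}) are diffeomorphisms. I would not verify (\ref{ua}) directly; instead I would establish the anchor-morphism identities $\za(gh)=\za(g)$ and $\zb(gh)=\zb(h)$ and then invoke the preceding Proposition, which shows these are equivalent to (\ref{ua}) and already delivers the translations as injective immersions of the form (\ref{aga}). For the diffeomorphism property I would produce explicit smooth inverses, $l_{g^{-1}}$ for $l_g$ and $r_{h^{-1}}$ for $r_h$; the Inverse Property is precisely the statement that these compose to the identity.

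The first step is to read off how the inversion $\iota$ interacts with the source and target. The equation $g^{-1}(gh)=h$ presupposes that $g^{-1}$ and $gh$ are composable, i.e. $\zb(g^{-1})=\za(gh)$, for every $h\in\cF^\za(\zb(g))$; hence $\za(gh)$ is independent of such $h$ and equals $\zb(g^{-1})$. Evaluating at the unit $h=\zb(g)$, which lies in $\cF^\za(\zb(g))$ because $\za$ restricts to the identity on the units $M$, and using $g\,\zb(g)=g$, gives $\za(g)=\zb(g^{-1})$, whence $\za(gh)=\za(g)$ for every composable $h$. The symmetric computation applied to $(hg)g^{-1}=h$ yields $\za(g^{-1})=\zb(g)$ and $\zb(gh)=\zb(h)$. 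With the two anchor-morphism identities in hand, the preceding Proposition guarantees that $G$ satisfies unities associativity and that the translations are the injective immersions (\ref{aga}).

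It then remains to upgrade these injective immersions to diffeomorphisms. The relations $\za(g^{-1})=\zb(g)$ and $\zb(g^{-1})=\za(g)$ show that $l_{g^{-1}}$ maps $\cF^\za(\za(g))\to\cF^\za(\zb(g))$, exactly reversing the domain and codomain of $l_g$; it is smooth because $m$ and $\iota$ are smooth and the $\za$-fibers are embedded submanifolds. Now $l_{g^{-1}}\circ l_g=\Id$ on $\cF^\za(\zb(g))$ by the Inverse Property, while the same identity applied to $g^{-1}$ in place of $g$ gives $l_{(g^{-1})^{-1}}\circ l_{g^{-1}}=\Id$ on $\cF^\za(\za(g))$. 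Thus $l_{g^{-1}}$ has a right inverse $l_g$ and a left inverse $l_{(g^{-1})^{-1}}$, so it is a bijection with $l_g=l_{(g^{-1})^{-1}}=l_{g^{-1}}^{-1}$; being a smooth bijection whose inverse $l_{g^{-1}}$ is also smooth, $l_g$ is a diffeomorphism. The identical argument based on $(hg)g^{-1}=h$ makes $r_h$ a diffeomorphism with inverse $r_{h^{-1}}$. Hence the maps (\ref{aga}) are diffeomorphisms and $G$ is a loopoid.

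The step I expect to demand the most care is the first one: extracting the source/target behaviour of $\iota$, and with it the anchor-morphism property, purely from the composability implicit in the Inverse Property, and then matching the fibres so that $l_{g^{-1}}$ is genuinely a two-sided inverse of $l_g$. In the global loop setting the cancellation $g(g^{-1}k)=k$ follows at once from bijectivity of a single translation, but here one cannot assume bijectivity in advance; instead one argues with the left/right inverse pair above, carefully tracking which fibre each translation acts on.
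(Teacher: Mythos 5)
Your proof is correct and follows essentially the same route as the paper's: you extract $\za(gh)=\za(g)$ and $\zb(gh)=\zb(h)$ from the composability built into the Inverse Property (via $\zb(g^{-1})=\za(g)$, obtained by taking $h=\zb(g)$), then let the inverses $l_{g^{-1}}$ and $r_{h^{-1}}$ turn the translations into diffeomorphisms between the appropriate $\za$- and $\zb$-fibers. The paper's proof is simply a terser version of the same argument; your left/right-inverse bookkeeping (which avoids relying on the not-yet-established identity $(g^{-1})^{-1}=g$) just makes explicit what the paper leaves implicit.
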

	\begin{proof}
		The existence of inverses implies $\za(gh)=\za(g)$ and $\zb(gh)=\zb(h)$\,. Indeed,
		$g^{-1}(gh)$ is composable, hence $\za(g)=\zb(g^{-1})=\za(gh)$ and similarly for $\beta$.
		The inverses assure that translations are diffeomorphisms between the corresponding $\za$ and $\zb$-fibers.
		
	\end{proof}
\noindent We will call this kind of loopoids \emph{inverse loopoids} or \emph{I.P. loopoids}. Like for the loops one can show that for I.P loopoids we have the identities
$$gg^{-1} = \za(g)\,,\quad g^{-1}g = \zb(g)\,,\quad (g^{-1})^{-1} = g\,,\quad \text{and}\quad  (gh)^{-1} = h^{-1} g^{-1}\,.$$
We can also consider only left/right inverses and speak about left or right inverse quasiloopoids. For instance, left/right transversals are left/right inverse quasiloopoids. The variant that we choose will depend on particular examples.
	
	\begin{example}\label{exa}
		Let $G'$ be a smooth loop with the unit $e$ and let $N$ be a manifold. On $G=G'\ti N\ti N$ we have an obvious structure of a smooth loopoid as a product structure of the loop $G'$ and the pair groupoid $N\ti N$ over $M= \{(e,s,s)\,|\, s\in N\}\subset G$\,. The anchor map is
		$$\za(x,s,t)=(e,s,s)\,,\quad \zb(x,s,t)=(e,t,t)$$
		and the partial multiplication reads
		$$(x,s,t)\bullet(y,t,r)=(xy,s,r)\,.$$
		If $G'$ is an inverse loop, then $G$ is an I.P. loopoid with the inverse $\zi(x,s,t)=(x^{-1},t,s)$\,. In this example $G'$ can be identified with the \emph{isotropy loop} $G_u$ at each $u\in M$\,.
	\end{example}

	\begin{remark}\label{prolong}
	Let $\pi: P \to M$ be a fibration, that is a surjective submersion. The prolongation of a smooth quasiloopoid (loopoid) $G\rightrightarrows M$ over $\pi$\,, defined as the set
		\[
		{\mathcal P}^{\pi}G = P \mbox{$\;$}_\pi \kern-3pt\times_\alpha G
		\mbox{$\;$}_\beta \kern-3pt\times_\pi P = \{ (p, g, p') \in P
		\times G \times P / \pi(p) = \alpha(g), \; \; \beta(g) = \pi(p')
		\}\,.
		\]
		is a smooth loopoid over $P$ with structural maps given by
	\[
\za^{\pi}(p,g,p')=p\,, \quad 	\zb^{\pi}(p,g,p')=p'\,,\quad \ze^{\pi}(p)=(p,\ze(\pi(p)),p)\,,
		\]
		and the multiplication $(p,g,p')\bullet(p',h,p'')=(p,gh,p'')$ where $(g,h)\in G^{(2)}$. If $G$ is an inverse loopoid, then $\mathcal P ^{\pi}G$ is an inverse loopoid with the inversion map $\zi(p,g,p')=(p',g^{-1},p)$\,. Therefore, we can produce a loopoid out of a quasiloopid by the prolongation of that over a fibration.
	\end{remark}
\begin{example}\label{almost}
		A more complicated example we can construct as follows.
		Consider the pair groupoid $\mathcal{G}=\R^2\ti\R^2$
		with the standard target and source maps
		$\za(u,v)=u$\,, $\zb(u,v)=v$
		and composition $(u,v)(v,z)=(u,z)$\,, $u,v,z\in\R^2$.
		For a diffeomorphism $\zf:\R\to\R$ being an odd function, $\zf(-x)=-\zf(x)$\,, define a submanifold
		\be\label{GG}G=\left\{\left((a_1,b_1),(a_2,b_2)\right)\in\mathcal{G}: a_1-a_2=\zf(b_1-b_2)\right\}\,.
		\ee
		It is a quasiloopoid, with the target and source maps inherited from $\mathcal{G}$,
		and the partial multiplication
		$$\left((a_1,b_1),(a_2,b_2)\right)\bullet\left((a_2,b_2),(a_3,b_3)\right)=
		\left((a_1,b_1),(a_1+\zf(b_3-b_1),b_3)\right)\,.
		$$
		Indeed, for fixed $\left((a_1,b_1),(a_2,b_2)\right)\in\R^2\ti\R^2$ the left unit is $(a_1,b_1)$ and the right one is $(a_2,b_2)$\,.
		For $G$ we have $\za(gh)=\za(g)$\,, but generally, for non-linear $\zf$\,, we have $\zb(gh)\ne\zb(h)$\,, so $G$ is not an inverse loopoid. However, it is interesting that $G$ has a left inverse $$\zi_l((a_1,b_1),(a_2,b_2))=((a_2,b_2),(a_1,b_1))\,.$$
Indeed,
		\beas&&\left((a_2,b_2),(a_1,b_1)\right)\bullet\left(\left((a_1,b_1),(a_2,b_2)\right)
		\bullet\left((a_2,b_2),(a_3,b_3)\right)\right)\\
		&=&\left((a_2,b_2),(a_1,b_1)\right)\bullet\left((a_1,b_1),(a_1+\zf(b_3-b_1),b_3)\right)\\
		&=&\left((a_2,b_2),(a_2+\zf(b_3-b_2),b_3)\right)
		=\left((a_2,b_2),(a_3,b_3)\right)\,.
		\eeas
		The last equality follows from the fact that $\left((a_2,b_2),(a_3,b_3)\right)\in G$\,, so by (\ref{GG})
		$$a_2-a_3=\phi(b_2-b_3)=-\phi(b_3-b_2)\,.$$
		All this implies that $G$ is a left inverse quasiloopoid.
		\end{example}
	
	\begin{example}
			 If the anchor map of a loopoid is diagonal (trivial), i.e., $\alpha=\beta$, then the loopoid is actually a bundle of loops. This bundle may be locally trivial as a fiber bundle and not as a loop
			 bundle,  since a local trivialization (in the general sense of fiber bundles) is not necessarily a local trivialization in the sense of loop bundles.  The loops for different fibers could be even non-isomorphic as loops. Conversely, any bundle of loops, i.e., a surjective submersion $\tau: G \rightarrow M$ whose each fiber carries a loop structure, is canonically a loopoid over $M$. In this case $M$ is embedded in $G$ as the section of unit elements in each loop fiber, $\alpha=\beta=\tau$, and the partial multiplication $k=g\cdot h$ is defined if and only if $\tau (k)=\tau (g)=\tau (h)$ and is $k=gh$ -- the loop multiplication in each fiber \cite{Gr}.
		 \end{example}

\subsection{Bisections and isotropy loops}
	The concept of global bisections for loopoids can be defined as for Lie groupoids. A submanifold $\mathcal B$ of loopoid $G\rightrightarrows M$ is called a \emph{bisection} of $G$ if the restricted maps of the source and the target maps $\za|_{\mathcal B}$ and $\zb|_{\mathcal B}$ are both diffeomorphisms. It means that for any bisection $\mathcal B$\,, there are corresponding $\za$-section $\tau:=(\za|_{\mathcal B})^{-1}:M\to \mathcal B$ and $\zb$-section $\sigma:=(\zb|_{\mathcal B})^{-1}:M\to \mathcal B$ such that $\zb \circ \tau$ and $\za \circ \sigma$ are diffeomorphism maps on $M$ to itself, and $\za \circ \sigma=(\zb \circ \tau)^{-1}$. For each bisection $\mathcal B$ we can define a left translation $l_{\mathcal B}:G\to G$ and a right translation $r_{\mathcal B}:G\to G$ on the loopoid, given by
	\[
	l_{\mathcal B}(g)=\sigma(\za(g))g\,, \quad r_{\mathcal B}(g)=g\tau(\zb (g))\;.
	\]
	Since the multiplication of two bisections $\mathcal B_1\mathcal B_2=l_{\mathcal B_1}\mathcal B_2=r_{\mathcal B_2}\mathcal B_1$ is again a bisection, the bisections of an inverse loopoid $G$ form an inverse loop with the obvious unit, $\mathcal B(m)=\ze(m)$\,, and inverse $\mathcal B^{-1}(m)=\mathcal B(m)^{-1}$\,.
	
\medskip\noindent In the quasiloopoid context, since the left and the right translations are not in general diffeomorphisms, therefore generally there is no global bisection. However, we can define local bisections on quasiloopoids as follow.
	A submanifold $\mathcal B$ of a quasiloopoid $G$ is a \emph{local bisection} if the restricted maps $\za|_{\mathcal B}$ and $\zb|_{\mathcal B}$ are local diffeomorphisms onto open sets $U,V\in M$\,. Consequently, there exist a local $\za$-section $\tau:U\to \mathcal B$ and a local $\zb$-section $\sigma:V\to \mathcal B$ such that $\zb\circ \tau:U\to V$  and $\za\circ \sigma:V\to U$ are diffeomorphisms. Using local bisection $\mathcal B$, we can define locally a left translation $l_{\mathcal B}:\za^{-1}(V)\to \za^{-1}(U)$ and a right translation $r_{\mathcal B}:\zb^{-1}(U)\to \za^{-1}(V)$\,, given by
	\[
	l_{\mathcal B}(g)=\sigma(\za(g))g\,, \quad r_{\mathcal B}(g)=g\tau(\zb (g))\;.
	\]	
	
\noindent Note that there is an equivalent definition of a bisection (or local bisection) \cite{Ma}, which takes a bisection (or local bisection) to be the pair of maps $\tau$ and $\sigma$, rather than submanifold $\mathcal B$. We will use both equivalent definitions.

\noindent The following Lemma can be borrowed from the theory of Lie groupoids (see \cite[Proposition 3.7]{Mei})
\begin{lemma} For any inverse loopoid $G\rightrightarrows M$ and any $m\in M$, the restriction of $\za$ to
the source fiber $\zb^{-1}(m)$ has constant rank.
\end{lemma}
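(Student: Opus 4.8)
The plan is to argue by homogeneity: I will show that any two points of the source fibre $\zb^{-1}(m)$ are carried to one another by a local diffeomorphism of $\zb^{-1}(m)$ that intertwines $\za$ with a local diffeomorphism of $M$. Granting this, differentiating the intertwining relation and using that both connecting maps have invertible differentials shows immediately that $\za|_{\zb^{-1}(m)}$ has the same rank at the two points, and since the points are arbitrary the rank is constant. The right self-maps of the fibre are the left translations by local bisections: by the Proposition a left translation preserves $\zb$, hence maps source fibres to source fibres, while it pushes $\za$ forward by a diffeomorphism of $M$, so these are exactly the intertwining fibre automorphisms we want.

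Concretely, fix $g_1,g_2\in\zb^{-1}(m)$ and set $a_i=\za(g_i)$. First I produce an element carrying $g_1$ to $g_2$ by putting $k=g_2 g_1^{-1}$. From the I.P.\ identities $gg^{-1}=\za(g)$ and $g^{-1}g=\zb(g)$ one gets $\za(g_1^{-1})=m=\zb(g_2)$, so $k$ is defined, with $\za(k)=a_2$ and $\zb(k)=a_1=\za(g_1)$; hence $kg_1$ is defined too, and the identity $(hg^{-1})g=h$ (obtained from $(hg)g^{-1}=h$ by replacing $g$ with $g^{-1}$ and using $(g^{-1})^{-1}=g$) yields $kg_1=(g_2 g_1^{-1})g_1=g_2$. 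Next I choose a local bisection $\mathcal B$ through $k$. Such a bisection exists because $\ker\rmd_k\za$ and $\ker\rmd_k\zb$ are subspaces of $T_kG$ of the same dimension $\dim G-\dim M$ and therefore admit a common complement $W$; any submanifold $\mathcal B\ni k$ with $T_k\mathcal B=W$ has $\za|_{\mathcal B}$ and $\zb|_{\mathcal B}$ local diffeomorphisms onto open sets $U,V\subset M$ after shrinking, by openness of the transversality condition. With $\sigma=(\zb|_{\mathcal B})^{-1}$ the associated left translation $l_{\mathcal B}(g)=\sigma(\za(g))g$ satisfies $\sigma(a_1)=k$, so that $l_{\mathcal B}(g_1)=g_2$.

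It then remains to run the intertwining argument. Since $\zb\circ l_{\mathcal B}=\zb$, the map $l_{\mathcal B}$ preserves $\zb^{-1}(m)$, and being a local diffeomorphism of $G$ on $\za^{-1}(V)$ (its inverse is the left translation by the inverse local bisection, available as $G$ is an inverse loopoid) it restricts to a local diffeomorphism of $\zb^{-1}(m)$ sending $g_1$ to $g_2$. Moreover $\za\circ l_{\mathcal B}=\psi\circ\za$ with $\psi:=\za\circ\sigma=(\za|_{\mathcal B})\circ(\zb|_{\mathcal B})^{-1}:V\to U$ a diffeomorphism. Restricting to $\zb^{-1}(m)$ and differentiating at $g_1$ gives $\rmd_{g_2}(\za|_{\zb^{-1}(m)})\circ\rmd_{g_1}(l_{\mathcal B}|_{\zb^{-1}(m)})=\rmd_{a_1}\psi\circ\rmd_{g_1}(\za|_{\zb^{-1}(m)})$, and since the outer factors are isomorphisms the rank of $\za|_{\zb^{-1}(m)}$ agrees at $g_1$ and $g_2$. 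I expect the main obstacle to be the two geometric facts about bisections rather than the algebra: the existence of a local bisection through the arbitrary point $k$ (resting on the equal-dimension kernels having a common complement) and the verification that $l_{\mathcal B}$ is genuinely a local diffeomorphism restricting well to the source fibre. Both are borrowed from Lie groupoid theory, but one must check that only the Proposition's consequences $\za(gh)=\za(g)$, $\zb(gh)=\zb(h)$ and the Inverse Property are used, with no associativity; the step $kg_1=g_2$ is precisely where the I.P.\ hypothesis enters, which is why the lemma is stated for inverse loopoids.
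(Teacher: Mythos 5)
Your proposal is correct and follows essentially the same route as the paper: both carry one point to the other via the left translation associated with a local bisection through $g_2g_1^{-1}$ (the paper's $\Psi_\zs$ is exactly your $l_{\mathcal B}$, and $\Psi_\zs(g)=g'$ is your I.P.\ computation $(g_2g_1^{-1})g_1=g_2$), check that this translation preserves $\zb$-fibers while intertwining $\za$ with the base diffeomorphism $\za\circ\sigma$, and conclude equality of ranks from the resulting commutative square. The only difference is that you supply two details the paper leaves implicit --- the existence of a local bisection through an arbitrary point (common complement of the two kernels) and the verification that $l_{\mathcal B}$ is a local diffeomorphism via the inverse bisection, using the Inverse Property --- which fills in, rather than alters, the argument.
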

\begin{proof}
To show that the ranks of
$$\za|_{\zb^{-1}(m)}:\zb^{-1}(m)\to M$$
coincide at given points $g,g'\in \zb^{-1}(m)$, let $\zs$ be a local bisection containing $g'g^{-1}$ and let $U=\zb(\zs)$\,, $V=\za(\zs)$\,. We have an obvious diffeomorphism $\Phi_\zs:U\to V$ and
$$\Psi_\zs:\za^{-1}(U)\to\za^{-1}(V)\,,\quad h\mapsto uh\,,$$
where $u$ is the unique element in $\zs$ such that $\zb(u)=\za(h)\;.$ It is easy to see that $\Psi_\zs(g)=g'$ and $\zb\circ\Psi_\zs=\zb$\,, so $\Psi_\zs$ restricts to a diffeomorphism of each $\zb$-fiber. In the commutative diagram
$$\xymatrix{
\za^{-1}(U)\cap\zb^{-1}(m) \ar[d]^{\za|_{\zb^{-1}(m)}}\ar[rr]^{\Psi_\zs|_{\zb^{-1}(m)}} && \za^{-1}(V)\cap\zb^{-1}(m)\ar[d]^{{\za|_{\zb^{-1}(m)}}} \\
U\ar[rr]^{\Phi_\zs} && V }
$$
the horizontal maps are diffeomorphisms and the upper map takes $g$ to $g'$. Hence the ranks of the vertical map at $g$ and $g'$ coincide.
\end{proof}
	\begin{theorem}  For every smooth inverse quasiloopoid $G\rightrightarrows M$ the intersection $G_m$ of  $\cF^\za(m)$ and $\cF^\zb(m)$ is an inverse loop (\emph{isotropy loop}) for all $m\in M\;.$
	\end{theorem}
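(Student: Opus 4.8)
The plan is to equip $G_m=\cF^\za(m)\cap\cF^\zb(m)$ with the operations inherited by restriction from $G$ and to check that they satisfy the axioms of a (smooth) inverse loop with the point $m\in M\subset G$ as unit. I would begin by recording the algebraic facts already established for a quasiloopoid with the Inverse Property: it is a loopoid satisfying $\za(gh)=\za(g)$ and $\zb(gh)=\zb(h)$, and it obeys $gg^{-1}=\za(g)$ and $g^{-1}g=\zb(g)$. From these one reads off $\za(g^{-1})=g^{-1}(g^{-1})^{-1}=g^{-1}g=\zb(g)$ and, symmetrically, $\zb(g^{-1})=\za(g)$.

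First I would verify closure and the unit. For $g,h\in G_m$ one has $\zb(g)=m=\za(h)$, so $(g,h)\in G^{(2)}$ and $gh$ is defined, with $\za(gh)=\za(g)=m$ and $\zb(gh)=\zb(h)=m$; hence $gh\in G_m$ and the multiplication is a total operation on $G_m$. The identities above give $\za(g^{-1})=\zb(g)=m$ and $\zb(g^{-1})=\za(g)=m$, so $G_m$ is stable under inversion. Since $\za$ is surjective, $m=\za(g)=gg^{-1}$ for some $g$, whence $\za(m)=\za(gg^{-1})=\za(g)=m$ and $\zb(m)=\zb(gg^{-1})=\zb(g^{-1})=\za(g)=m$; thus $m\in G_m$, and $gm=g\zb(g)=g$, $mg=\za(g)g=g$ show that $m$ is a two-sided unit.

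Next I would obtain the loop and inverse-loop structure directly from the Inverse Property. Given $a,b\in G_m$, the identity $g^{-1}(gh)=h$ with $g=a$ forces any solution of $ax=b$ to equal $a^{-1}b$, and one checks $a^{-1}b\in G_m$ and $a(a^{-1}b)=b$; symmetrically $ba^{-1}$ is the unique solution of $ya=b$. Therefore the left and right translations of $G_m$ are bijections and $(G_m,\cdot,m)$ is a loop. Since the defining identities $g^{-1}(gh)=h$ and $(hg)g^{-1}=h$ hold for all composable pairs of $G$, in particular for all pairs drawn from $G_m$, the restriction of $\iota$ exhibits $G_m$ as an inverse loop.

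The step I expect to be the real obstacle is smoothness, that is, showing that $G_m$ is an embedded submanifold on which these operations are smooth; this is precisely the purpose of the preceding Lemma. Because $\za|_{\zb^{-1}(m)}$ has constant rank, the constant-rank theorem identifies its level set $G_m=(\za|_{\zb^{-1}(m)})^{-1}(m)$ with an embedded submanifold of the source fibre $\zb^{-1}(m)$. The restrictions of the (smooth) multiplication and of $\iota$ then land in $G_m$ and are smooth, so $G_m$ is a smooth inverse loop. Note that, unlike the purely algebraic part, this final step does not follow from submersivity of $\za,\zb$ alone but genuinely relies on the constant-rank statement of the Lemma.
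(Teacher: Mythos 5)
Your proof is correct and takes essentially the same approach as the paper: the algebraic part (closure of $G_m$ under the inherited multiplication and inversion, the unit $m$, and unique solvability from the Inverse Property) is exactly what the paper dismisses as ``easy to see'', and the smoothness of $G_m$ is obtained, just as in the paper, from the constant-rank Lemma applied to $\za|_{\zb^{-1}(m)}$. Your only additions are welcome details the paper leaves implicit, such as deriving $\za(g^{-1})=\zb(g)$, $\zb(g^{-1})=\za(g)$ and verifying $\za(m)=\zb(m)=m$ from the I.P.\ identities.
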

	\begin{proof} It is easy to see that $G_m$ is closed with respect to the multiplication and inverse inherited from $G$, so it is an inverse loop. It is a submanifold of $G$ according to the above Lemma, so it is smooth.
		%Indeed, if $G$ is left inverse and $g,h\in G_m$, then the right multiplication by $h^{-1}g$ is a diffeomorphism of $G_m$, so it induces an isomorphism of $\T_hG_m$ and $\T_gG_m$.
	\end{proof}

	\section{Skew algebroids}
	
	Let $\zt:E\to M$ be a vector bundle of rank $n$ over an
	$m$-dimensional manifold $M$ and let $\zp:E^\*\ra M$ be its dual.
	Recall that the Grassmann algebra $\Gr(E)=\oplus_{i=0}^\infty\Sec(\we^i
	E)$
	of multisections of $E$ is a
	graded commutative associative algebra with respect to the wedge
	product.
	We use affine coordinates
	$(x^a,\zx_i)$ on $E^\*$ and the dual coordinates $(x^a,y^i)$ on
	$E$\,, associated with dual local bases, $(e_i)$ and $(e^i)$\,, of
	sections of $E$ and $E^\*$\,, respectively.
	
	\begin{definition}  A \emph{skew algebroid} structure on $E$ is given by a linear bivector
		field $\zP$ on $E^\*$ (linear skew \emph{Leibniz structure}). In local coordinates,
		$$ \Pi =\frac{1}{2}c^k_{ij}(x)\zx_k
		\partial _{\zx_i}\we \partial _{\zx_j} + \zr^b_i(x) \partial _{\zx_i}
		\wedge \partial _{x^b}\,,
		$$ where $c^k_{ij}(x)=-c^k_{ji}(x)\;.$ If \ $\Pi$ is a Poisson tensor, we speak about a \emph{Lie algebroid}.
	\end{definition}
	
	As the bivector field $\Pi$ defines a bilinear bracket
	$\{\cdot,\cdot\}^\Pi$ on the algebra $C^\infty(E^\*)$ of smooth
	functions on $E^\*$ by
	$\{\zvf,\psi\}^{\zP}=\langle\zP,\xd\zvf\we\xd\psi\rangle$\,, where
	$\langle\cdot,\cdot\rangle$ stands for the contraction.
	
	\begin{theorem}\label{ty}
		A skew algebroid structure $(E,\Pi)$ can be equivalently defined
		as a skew-symmetric $\R$-bilinear bracket $[\cdot ,\cdot]^\Pi $
		on the module $\Sec(E)$ of sections of $E$, together with a vector
		bundle morphisms\ $\zr=\zr^\Pi \colon E\rightarrow T M$ (\emph{the
			anchor}), such that
		$$ [X,fY]^\Pi =\zr^\Pi(X)(f)Y +f [X,Y]^\Pi\,,
		$$ for all $f \in C^\infty (M)$\,, $X,Y\in \Sec(E)$\,.
	\end{theorem}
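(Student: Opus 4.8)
The plan is to exploit the standard dictionary between the fibrewise-linear structure of $E^*$ and tensorial objects on $E$, carrying over the equivalence from the linear-Poisson/Lie-algebroid setting but discarding everything that refers to the Jacobi identity. Write $\pi:E^*\to M$ for the projection. To each section $X\in\Sec(E)$ I associate its \emph{linear function} $\ell_X\in C^\infty(E^*)$, $\ell_X(\xi)=\langle\xi,X(\pi(\xi))\rangle$, and to each $f\in C^\infty(M)$ the \emph{basic function} $\pi^*f$. In the coordinates $(x^a,\xi_i)$ one has $\ell_{e_i}=\xi_i$ and $\ell_{fX}=(\pi^*f)\,\ell_X$, while the differentials $\{\xd\ell_X\}\cup\{\xd(\pi^*f)\}$ span $\T^*E^*$ at every point. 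Since a bivector field is exactly a skew-symmetric biderivation of $C^\infty(E^*)$ (a bracket obeying the Leibniz rule in each slot and vanishing on constants), it is completely determined by its values on the differentials of these generating functions.

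First I would treat the direction $\Pi\mapsto([\cdot,\cdot]^\Pi,\zr^\Pi)$. Contracting the coordinate expression for $\Pi$ with $\xd\ell_X\we\xd\ell_Y$, with $\xd\ell_X\we\xd(\pi^*f)$, and with $\xd(\pi^*f)\we\xd(\pi^*g)$ gives
$$\{\ell_X,\ell_Y\}^\Pi=\ell_{[X,Y]^\Pi},\qquad \{\ell_X,\pi^*f\}^\Pi=\pi^*\!\big(\zr^\Pi(X)f\big),\qquad \{\pi^*f,\pi^*g\}^\Pi=0,$$
where the first two identities \emph{define} $[X,Y]^\Pi$ and $\zr^\Pi(X)$; the point is that linearity of $\Pi$ forces the first bracket to be again linear (hence of the form $\ell_{(\cdot)}$) and the second to be basic (hence a pullback). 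Skew-symmetry of $[\cdot,\cdot]^\Pi$ is inherited from antisymmetry of $\Pi$, and $\zr^\Pi(X)$ is a derivation of $C^\infty(M)$, i.e. a vector field, because $\{\ell_X,\cdot\}^\Pi$ is; the identity $\{\ell_{fX},\pi^*g\}^\Pi=(\pi^*f)\{\ell_X,\pi^*g\}^\Pi$ shows $\zr^\Pi(fX)=f\,\zr^\Pi(X)$, so $\zr^\Pi$ is a genuine vector-bundle morphism $E\to\T M$. The Leibniz rule of the theorem then follows by applying the biderivation property to $\ell_{fY}=(\pi^*f)\ell_Y$:
$$\{\ell_X,(\pi^*f)\ell_Y\}^\Pi=(\pi^*f)\{\ell_X,\ell_Y\}^\Pi+\{\ell_X,\pi^*f\}^\Pi\,\ell_Y=\pi^*f\,\ell_{[X,Y]^\Pi}+\pi^*(\zr^\Pi(X)f)\,\ell_Y,$$
which is exactly $\ell_{\,\zr^\Pi(X)(f)Y+f[X,Y]^\Pi}$.

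Conversely, given a skew-symmetric $\R$-bilinear bracket $[\cdot,\cdot]$ and a bundle map $\zr:E\to\T M$, I would reconstruct $\Pi$ by prescribing the three bracket values above and extending to $C^\infty(E^*)$ as a biderivation. The main obstacle is exactly here: one must check that this prescription is consistent, i.e. compatible with the only relations among the generators, namely $\ell_{X+Y}=\ell_X+\ell_Y$ and $\ell_{fX}=(\pi^*f)\ell_X$. Testing the latter inside a bracket, $\{\ell_{fX},\ell_Y\}$ must equal $\{(\pi^*f)\ell_X,\ell_Y\}$; expanding the right-hand side by the Leibniz rule and using skew-symmetry produces the requirement $[fX,Y]=f[X,Y]-\zr(Y)(f)X$, which is equivalent to the Leibniz identity of the theorem. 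Hence the Leibniz hypothesis is \emph{precisely} the well-definedness condition, and once it holds the resulting biderivation is genuinely first order in each argument, so it is the contraction with a (necessarily linear) bivector field $\Pi$; in the frame $(e_i)$ this $\Pi$ takes the normal form of the definition, with $c^k_{ij}$ and $\zr^b_i$ read off from $[e_i,e_j]=c^k_{ij}e_k$ and $\zr(e_i)=\zr^b_i\,\partial_{x^b}$. The two constructions are mutually inverse by inspection, since each passes through the same structure functions $(c^k_{ij},\zr^b_i)$; everything except the consistency check above is the standard linear-object dictionary together with routine contraction of the coordinate expression.
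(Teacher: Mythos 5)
Your proof is correct and takes exactly the route the paper intends: the paper actually states Theorem \ref{ty} without proof, recording only the correspondence formulas $\zi([X,Y]^\Pi)=\{\zi(X),\zi(Y)\}^{\zP}$ and $\zp^{*}(\zr^\Pi(X)(f))=\{\zi(X),\zp^{*}f\}^{\zP}$, which are precisely your defining identities with $\zi(X)$ in place of $\ell_X$. Your write-up supplies the standard verifications the paper leaves implicit and correctly isolates the one nontrivial point, namely that in the converse direction the Leibniz rule (combined with skew-symmetry, and with $C^\infty(M)$-linearity of $\zr$ for the anchor slot) is exactly the consistency condition making the prescribed biderivation well defined on the relation $\ell_{fX}=(\zp^{*}f)\,\ell_X$.
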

	
	The bracket $[\cdot,\cdot]^\Pi$ and the anchor $\zr^\Pi$ are related
	to the bracket $\{\cdot,\cdot\}^{\zP}$ according to the formulae:
	\beas
	\zi([X,Y]^\Pi)&= \{\zi(X), \zi(Y)\}^{\zP}\,,  \\
	\zp^\*(\zr^\Pi(X)(f))  &= \{\zi(X), \zp^\*f\}^{\zP}\,,
	\eeas
	where we denoted with $\zi(X)$ the linear function on $E^\*$
	associated with the section $X$ of $E$, i.e., $\zi(X)(e^\ast_p)=\la
	X(p),e^\ast_p\ran$ for each $e^\ast_p\in E^\ast_p$\,.
	
\noindent The Lie algebroid bracket satisfies the Jacobi identity
	$$
	[[X,Y]^\Pi,Z]^\Pi=[X,[Y,Z]^\Pi]^\Pi-[Y,[X,Z]^\Pi]^\Pi\,.
	$$
	It follows that the anchor map is a Lie algebra homomorphism
	\be\label{AH}
	\zr^\Pi([X,Y]^\Pi)=[\zr(X),\zr(Y)]\,,
	\ee
	where the second bracket is the bracket of vector fields. If we assume (\ref{AH}) without the
	Jacobi identity, then we deal with an \emph{almost Lie algebroid}.
\begin{remark}
	The tangent algebra of a smooth loop is	a skew algebra, i.e., a real
	vector space equipped with a bilinear skew operation. Actually, skew algebras and skew algebroids over a point are the same things.		
		\end{remark}
\begin{remark}
		The tangent algebra of a smooth Moufang loop is a  Mal'cev algebra which is a skew algebra satisfying \cite{mal}
		\[
		\left[ [X,Y],[X,Z]\right] =\left[ \left[ [X,Y],Z\right] ,X\right] +\left[ \left[ [Y,Z],X\right] ,X\right]+\left[ \left[ [Z,X],X\right] ,Y\right]\,,
		\]
	for every $X,Y,Z$. One can see that Mal'cev algebras are skew algebroids over a point satisfying the above identity.
\end{remark}

	\begin{example}
		Any skew algebra bracket $[\cdot,\cdot]_0$ on $\R^n$ gives rise to an almost Lie algebroid
		$E=\T M\ti\R^n\to M$ with the bracket
		$$[(X,v),(Y,w)]=\left([X,Y],X(w)-Y(v)+[v,w]_0\right)$$
		and the anchor $\zr(X,v)=X$\,.
		Here, $X,Y$ are vector fields on $M$ and $v,w$ are functions on $M$ with values in $\R^n$.
	\end{example}

We can reduce the prolongation of a Lie algebroid over a smooth map to a skew algebroid over a smooth map (\cite{HiMa}, \cite{LeMa}). In the following we will consider the process for the prolongation of a skew algebroid over a submersion. As we expect from the Remark \ref{prolong}, we will see that the prolongation of a skew algebroid over a fibration is an almost Lie algebroid.
\begin{remark}\label{pro}
The prolongation of the skew algebroid $(E, \lcf\cdot,\cdot\rcf,\zr)$ over the submersion $\pi:P\to M$ which is defined by
\[
\mathcal P^\pi E=\{(X,V)\in E\times \sT P|\quad \zr(X)=\sT \pi(V)\}\,,
\]
where $\sT\pi:\sT P \to \sT M$ is the tangent map to $\pi$\,, is equipped with an almost Lie algebroid structure.
\end{remark}
\begin{proof}
The space $\mathcal P^\pi(E)$ is a vector bundle over $P$ with vector bundle projection
\[
\tau ^{\pi}:\mathcal P^\pi E \to P\,,\quad \tau ^{\pi}(X,V)\to \tau_{P}(V)\,,
\]
where $\tau_{P}:\sT P\to P$ is the canonical projection. In fact, for $p\in P$ every fiber
\[
(\tau ^{\pi})^{(-1)}_{p}=(\mathcal P^{\pi}E)_p=\{(X, V)\in E_{\pi(p)}\times \sT _p P|\quad \zr(X)=(\sT _p\pi)(V) \}
\]
where $E_{\pi(p)}$ is the fiber of $E$ over the point $\pi(p)\in M$, is a finite-dimensional real vector subspace of $E_{\pi(p)}\times \sT _p P$ because
 $$
 dim(\mathcal P^{\pi} E)_p= rank (E)+dim(P)-dim\left( \zr(E_{\pi(p)})+(\sT_p \pi)(\sT_p P)\right),
 $$
and since $\pi$ is a submersion, we have
\[
 \zr(E_{\pi(p)})+(\sT_p \pi)(\sT_p P)=\sT _{\pi(p)}M\,,\quad \mbox{for all} \quad p\in P\,,
\]
which implies that there exists $c\in \mathbb N$ such that $dim\left( \zr(E_{\pi(p)})+(\sT_p \pi)(\sT_p P)\right)=c$ and therefore $\mathcal P^{\pi} E$ has constant rank.

To know how the skew algebroid structure on $\mathcal P^\pi E$  can be characterized, we need to describe the sections of the vector bundle $\tau ^{\pi}:\mathcal P^\pi E \to P$ and for that we use the pullback vector bundle $E$ over $\pi$. The pullback of the $E$ over $\pi$
\[
\pi^*E=\{(p,X)\in P\times E|\quad \pi(p)=\tau(X)\}\,,
\]
is a vector bundle over $P$ with vector bundle projection $$pr_1:\pi^*E\to P\,,\quad pr_1(p,X)=p\,,$$
and any section $X$ of $E$ over $M$ induces a section of  $\pi^*E$ over $P$, simply by defining
\begin{equation}\label{sigma}
\sigma=f_i(X_i\circ \pi)\,, \quad f_i\in C^{\infty}(P)\,,\quad X_i\in \Sec E\,.
\end{equation}
We have to go one step further and take the pullback of
$$\xymatrix{
\mathcal P^{\pi}E \ar[d]^{\zr ^{\pi}}\ar[rr]^{\zc^*} && \pi^*E \ar[d]^{\pi^*{\zr}} \\
	\sT P\ar[rr]^{d\pi} && \pi^*(\sT M)}
$$
in the category of vector bundles over $P$. Note that we denote the map $\mathcal P^{\pi}E \to E$ by $\zc$.
Then, sections of the vector bundle $\tau ^{\pi}:\mathcal P^{\pi}E\to P$ are of the form $X'\oplus \sigma$, where $X'\in \Sec(\sT P)$ and $\sigma\in \Sec(\pi^*E)$ and
\[
d\pi(X')=\pi^*\zr(\sigma)\,,
\]
and so from (\ref{sigma}), for $X^{\wedge}\in \Sec(\mathcal P^{\pi}E)$ we have $X^{\wedge}(p)=(\sigma(p),X'(p))$ with $$(\sT_p\pi)(X'(p))=f_i(p)\zr(X_i)(\pi(p))\,.$$

The vector bundle $\tau^{\pi}:\mathcal P^{\pi}E\to P$ is equipped with a skew algebroid structure with the anchor map $\zr^{\pi}(X^{\wedge})=X'$\,. The bracket of two sections $(f_i(X_i\circ \pi),X')$ and $(g_j(Y_j\circ \pi),Y')$ of $\tau^{\pi}:\mathcal P^{\pi}E\to P$ with $f_i,g_j\in C^{\infty}(P)$\,, $X_i,Y_j\in Sec(E)$ and $X',Y'\in Sec(\sT P)$\,, is
\[
\begin{array}{rcl}
\lcf (f_i(X_i\circ \pi),X'), (g_j(Y_j\circ \pi),Y') \rcf ^{\pi}&=&((f_ig_j\lcf X_i,Y_j\rcf \circ \pi)+X'(g_j)(Y_j \circ \pi)\\[4pt]
&&-Y'(f_i)(X_j \circ \pi),[X',Y'])\,.
\end{array}
\]
One may prove that the bracket $\lcf \cdot, \cdot \rcf^{\pi}$ on $Sec(\mathcal P^{\pi}E)$ satisfies Jacobi identity if and only if the bracket $\lcf \cdot, \cdot \rcf$ on $Sec(E)$ satisfies Jacobi identity. We see that
\[
\zr^{\pi}\lcf X^{\wedge},Y^{\wedge}\rcf^{\pi} =[\zr^{\pi}(X^{\wedge}),\zr^{\pi}(Y^{\wedge})]\,,\quad\mbox{for all}\quad  X^{\wedge},Y^{\wedge} \in Sec(\mathcal P^{\pi}E)\,,
\]
and does not depends on the identity $\zr\lcf X,Y\rcf =[\zr(X),\zr(Y)]$ of sections of $E$. Therefore, the prolongation of a skew algebroid over a submersion is equiped with an almost Lie algebroid structure.

\end{proof}
\begin{definition}
		A subbundle $B\to N$ of a skew algebroid $E\to M$ with anchor $\zr:E\to\T M$ is called a \emph{skew subalgebroid} if
		$\zr(B)\subset \T N$ and
		$$\Sec(E,B)=\{ X\in\Sec(E)\,|\,X_{|N}\in\Sec(B)\}$$ is closed with respect to the bracket on $\Sec(E)$\,.
\end{definition}
\begin{example} If $N$ is a submanifold of $M$, then $\T N$ is canonically a Lie subalgebroid of $\T M$.
\end{example}
	Exactly like for Lie algebroids we have the following.
\begin{proposition} If $B\to N$ is a skew subalgebroid of $E\to M$\,, then it inherits a unique skew algebroid structure such that the natural map $\Sec(E,B)\to\Sec(B)$ preserves the bracket. If $E$ is almost Lie, then $B$ is almost Lie.
\end{proposition}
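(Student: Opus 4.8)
The plan is to establish the two claims of the proposition in turn: first that a skew subalgebroid $B\to N$ inherits a well-defined skew algebroid structure for which $\Sec(E,B)\to\Sec(B)$ preserves the bracket, and then that this induced structure is almost Lie whenever $E$ is. I would model the argument on the standard construction for Lie subalgebroids, since the definition of skew subalgebroid is designed precisely so that the restriction behaves well.

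First I would check that the bracket on $\Sec(E,B)$ descends to $\Sec(B)$. The key point is that the map $\Sec(E,B)\to\Sec(B)$, $X\mapsto X_{|N}$, is surjective (every section of $B$ extends to a section of $E$ restricting into $B$ along $N$), so a bracket on $\Sec(B)$ is forced by the requirement that restriction preserves it: one sets $[X_{|N},Y_{|N}]^B \de \big([X,Y]^\Pi\big)_{|N}$ for $X,Y\in\Sec(E,B)$. The hypothesis that $\Sec(E,B)$ is closed under the bracket of $E$ guarantees $[X,Y]^\Pi\in\Sec(E,B)$, so the right-hand side indeed lies in $\Sec(B)$. The main obstacle — and the heart of the argument — is \emph{well-definedness}: I must show the value $\big([X,Y]^\Pi\big)_{|N}$ depends only on $X_{|N}$ and $Y_{|N}$, not on the chosen extensions. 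This is where the Leibniz rule from Theorem \ref{ty} does the work. If $X,X'\in\Sec(E,B)$ agree on $N$, their difference can be written (locally, using a partition of unity) as $\sum_a f_a Z_a$ with $Z_a\in\Sec(E,B)$ and functions $f_a\in C^\infty(M)$ vanishing on $N$; applying $[\,\cdot\,,Y]^\Pi$ and using $[f_aZ_a,Y]^\Pi = -\,\zr^\Pi(Y)(f_a)Z_a + f_a[Z_a,Y]^\Pi$ (the Leibniz rule in the second slot, obtained from the first via skew-symmetry), I see that each term restricts to zero on $N$: the second summand vanishes because $f_a|_N=0$, and the first because $\zr^\Pi(Y)(f_a)$ vanishes on $N$ — here one uses $\zr(B)\subset\T N$, so that the vector $\zr^\Pi(Y)(p)$ is tangent to $N$ at $p\in N$ and hence annihilates a function vanishing along $N$. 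The same computation in the first slot handles changing the extension of $X$.

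Having fixed the bracket, I would define the anchor $\zr^B\colon B\to\T N$ as the restriction $\zr^\Pi|_B$, which indeed lands in $\T N$ by the assumption $\zr(B)\subset\T N$. Then I would verify the skew algebroid axioms of Theorem \ref{ty} for $(B,[\cdot,\cdot]^B,\zr^B)$: skew-symmetry and $\R$-bilinearity are inherited immediately from $E$, and the Leibniz identity $[X_{|N},fY_{|N}]^B = \zr^B(X_{|N})(f)\,Y_{|N} + f\,[X_{|N},Y_{|N}]^B$ for $f\in C^\infty(N)$ follows by extending $f$ arbitrarily to $\widetilde f\in C^\infty(M)$, applying the Leibniz rule for $E$ to $[X,\widetilde f Y]^\Pi$, and restricting to $N$; the restricted identity is independent of the extension $\widetilde f$ by the well-definedness already established. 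Uniqueness of the structure is automatic, since the bracket and anchor are forced by the condition that $\Sec(E,B)\to\Sec(B)$ preserves the bracket and commutes with the anchors.

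Finally, for the almost Lie claim, suppose $E$ is almost Lie, i.e. its anchor satisfies the compatibility $\zr^\Pi([X,Y]^\Pi)=[\zr^\Pi(X),\zr^\Pi(Y)]$ (equation (\ref{AH})). For sections of $B$ I restrict this identity to $N$: for $X,Y\in\Sec(E,B)$ one has $\zr^B([X,Y]^B) = \zr^\Pi([X,Y]^\Pi)_{|N} = [\zr^\Pi(X),\zr^\Pi(Y)]_{|N}$, and since both $\zr^\Pi(X)$ and $\zr^\Pi(Y)$ are tangent to $N$ along $N$ (because $\zr(B)\subset\T N$), they restrict to vector fields on $N$ whose Lie bracket is the restriction of the ambient bracket. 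Hence $\zr^B([X,Y]^B)=[\zr^B(X_{|N}),\zr^B(Y_{|N})]$, which is precisely the almost Lie condition for $B$. I expect the only genuinely delicate point in the whole proof to be the well-definedness argument, where the interplay between the Leibniz rule and the tangency condition $\zr(B)\subset\T N$ is essential; everything else is a routine transcription of the Lie subalgebroid case, made possible by the fact that neither the construction nor the almost Lie property invokes the Jacobi identity.
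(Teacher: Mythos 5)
Your proof is correct and follows exactly the standard Lie-subalgebroid argument that the paper itself invokes without writing out (``Exactly like for Lie algebroids we have the following''): surjectivity of the restriction $\Sec(E,B)\to\Sec(B)$, well-definedness of the induced bracket via the Leibniz rule combined with the tangency condition $\zr(B)\subset \T N$, and the almost Lie property by restricting the identity $\zr([X,Y])=[\zr(X),\zr(Y)]$ to $N$, with the correct observation that the Jacobi identity enters nowhere. One small repair: a section vanishing on $N$ cannot in general be decomposed as $\sum_a f_a Z_a$ with \emph{both} $f_a|_N=0$ and $Z_a\in\Sec(E,B)$ (when $\rk B<\rk E$ such products would vanish to second order along $N$ in the transverse directions; e.g.\ take $E$ the trivial line bundle over $\R$, $N=\{0\}$, $B=0$, and the section $x\mapsto x$), but your computation never uses $Z_a\in\Sec(E,B)$ --- arbitrary $Z_a\in\Sec(E)$ suffice, since only $f_a|_N=0$ and the tangency of $\zr^\Pi(Y)$ to $N$ along $N$ are needed, so simply drop that condition.
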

	Exactly like for Lie algebroids there is a canonical construction of the direct product $(E_1\ti E_2)\to (M_1\ti M_2)$ of skew (almost Lie) algebroids $E_i\to M_i$\,, $i=1,2$\,, which leads to the concept of a skew algebroid morphism.
\begin{definition}(cf. \cite{Gr0})
		If $E_i\to M_i$\,, $i=1,2$\,, are skew (almost Lie ) algebroids, then a \emph{skew (almost Lie) algebroid morphism} between them is a vector bundle morphism $\zvf:E_1\to E_2$ such that its graph $$\Graph(\zvf)\subset E_1\ti E_2$$ is a skew (almost Lie) subalgebroid of the direct product $E_1\ti E_2$\,.
\end{definition}	
	
\begin{example}
In the Remark \ref{pro}, the pair $(pr_1,\pi)$ is a morphism between the skew algebroids $(\mathcal P^{\pi} E,\lcf \cdot,\cdot\rcf^{\pi},\zr^{\pi})$ and $(E,\lcf \cdot,\cdot\rcf,\zr)$\,, where $\pr_1:\mathcal P ^{\pi} E \to E$ is the cononical projection on the first factor.
\end{example}

	\section{Generalized Lie functor}
	
	In the context of the Lie algebroid theory, the `nonassociative' objects have already been studied by us and other authors
		\cite{GU1,GU2,GG,GGU,GJ}.
		The main objects are \emph{skew algebroids}, i.e, vector bundles $E$ with a skew-symmetric bracket operation on sections which does not satisfy the Jacobi identity but admits an anchor map $a:E\to \T M$\,. If the anchor is a bracket
		homomorphism, we speak about \emph{almost Lie algebroids}. The homotopies of admissible paths in almost Lie algebroids are well defined \cite{GG,GJ}. The importance of such objects comes from the fact that they form a nice framework for generalizations of Lagrangian and Hamiltonian mechanics and nonholonomic systems \cite{GGU,GLMM}. We show that we can obtain skew and almost Lie algebroids as infinitesimal parts of smooth loopoids and that some of the standard geometric constructions for the tangent and cotangent groupoid for a given Lie groupoid can be extended to this category.
	
	The infinitesimal part of a Lie groupoid $G\rightrightarrows M$ is the normal bundle $AG=\zn(G,M)\to M$\,,
	$$\zn(G,M)=\T G_{|M}/\T M$$ equipped with the natural Lie algebroid bracket. The association $G\mapsto AG$ is a functor from the category of Lie groupoids to the category of Lie algebroids; we will refer to it as the \emph{Lie functor}. To any $X\in \Sec(AG)$ it corresponds the unique left-invariant vector field $\lvec{X}$ on $G$ such that, for each $m\in M$ the vector $\lvec{X}(m)$  represents $X_m$ in $\zn(G,M)$\,. Similarly we obtain the right-invariant vector fields $\rvec{X}$\,.
	
	Left invariant vector fields are closed with respect to the Lie bracket and define a Lie algebroid bracket $[\cdot,\cdot]_l$ on $AG$ such that
	$$ \lvec{[X,Y]_l}=[\lvec{X},\lvec{Y}]\,.
	$$
	Similarly, for the right-invariant vector fields
	$$ \rvec{[X,Y]_r}=[\rvec{X},\rvec{Y}]\,.
	$$
 In a Lie groupoid, left translations commute with right translations, i.e,
	$$ [\rvec{X},\lvec{Y}]=0\,.
	$$
	Moreover, $[X,Y]_l=-[X,Y]_r$\,.

\bigskip
We will try to follow these construction of the bracket on $\Sec(AG)$ for any (local) smooth quasiloopoid $G\rightrightarrows M$\,.

\begin{definition}	
	For $X\in\Sec(AG)$ we define its \emph{left prolongation}  (\emph{left fundamental vector field on $G$}) by
	
	$$\lvec{X}(g)=(\T_{\zb(g)}l_g)(\bar{X}^\za(\zb(g))\,.$$
\end{definition}	
	Here, $\bar{X}^\za(a)\in\T\cF^\za(a)$ represents $X(a)$ in the normal bundle $AG=\zn(G,M)$. This is a canonical isomorphism of the vector bundles $AG\to M$ and $\T\cF^\za_{|M}\to M$.
	A similar construction we do for the \emph{right prolongation} (\emph{right fundamental vector field on $G$})
	
	$$\rvec{X}(g)=(\T_{\za(g)}r_g)(\bar{X}^\zb(\za(g))\,,$$
	
	where $\bar{X}^\zb(a)\in\T\cF^\zb(a)$ represents $X(a)$ in the normal bundle $\zn(G,M)$\,.
	\begin{theorem}
		For any $X\in\Sec(AG)$ the vector fields $\lvec{X}$ and $\rvec{X}$ are smooth on $G$\,.
	\end{theorem}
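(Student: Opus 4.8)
The plan is to realise each fundamental vector field as a composition of smooth maps, the only nontrivial ingredient being the smoothness of the partial multiplication $m\colon G^{(2)}\to G$, which is built into the definition of a quasiloopoid. First I would record that $\bar{X}^\za$ is a \emph{smooth} section of the vertical subbundle $\ker(\T\za)_{|M}\to M$: indeed $\T_a\cF^\za(a)=\ker\T_a\za$ for $a\in M$, and the stated canonical vector bundle isomorphism $AG\xrightarrow{\sim}\T\cF^\za_{|M}$ carries the smooth section $X$ to the smooth section $\bar{X}^\za$. The same applies to $\bar{X}^\zb$, with $\zb$ in place of $\za$.

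For the left field, consider $\varsigma\colon G\to G\ti G$, $\varsigma(g)=(g,\zb(g))$. Since $\zb(g)\in M$ is a unit we have $\za(\zb(g))=\zb(g)$, so $\varsigma(g)\in G^{(2)}$ and $\varsigma$ is a smooth map into the embedded submanifold $G^{(2)}$. Next I would lift $\varsigma$ to $\hat\varsigma\colon G\to \T(G\ti G)$, $\hat\varsigma(g)=(0_g,\bar{X}^\za(\zb(g)))$, which is smooth as the pairing of the zero section with $\bar{X}^\za\circ\zb$. The key geometric observation is that $\hat\varsigma(g)$ is tangent to $G^{(2)}$: for every $h\in\cF^\za(\zb(g))$ one has $\za(h)=\zb(g)$, so the whole slice $\{g\}\ti\cF^\za(\zb(g))$ lies in $G^{(2)}$, and $(0_g,\bar{X}^\za(\zb(g)))$ is the velocity of a curve inside this slice. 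Thus $\hat\varsigma$ corestricts to a smooth map $G\to\T G^{(2)}$, and $\lvec{X}=\T m\circ\hat\varsigma$ is smooth. That this composite equals the definition follows because $m$ restricted to the slice $\{g\}\ti\cF^\za(\zb(g))$ is exactly $l_g$, whence $\T m(0_g,v)=(\T_{\zb(g)}l_g)(v)$ for every vertical $v$.

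The right field $\rvec{X}$ is handled by the mirror construction: replace $\varsigma$ by $g\mapsto(\za(g),g)$, which lands in $G^{(2)}$ because $\zb(\za(g))=\za(g)$, lift it to $(\bar{X}^\zb(\za(g)),0_g)$, observe that the slice $\cF^\zb(\za(g))\ti\{g\}$ lies in $G^{(2)}$ so that this vector is tangent to $G^{(2)}$, and compose with $\T m$, using that $m$ restricted to that slice equals $r_g$. I expect the only steps needing genuine care to be (i) the verification that the lifted vectors are tangent to $G^{(2)}$ --- cleanly obtained from the slice description rather than from a general transversality computation --- and (ii) the smoothness of the representatives $\bar{X}^\za,\bar{X}^\zb$, i.e.\ of the canonical bundle isomorphism. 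It is worth stressing that neither associativity nor any invertibility enters: the argument uses only the smoothness of $m$ and the fact that $M$ consists of units, so that $(g,\zb(g))$ and $(\za(g),g)$ are composable.
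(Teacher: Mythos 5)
Your proof is correct, and it takes a recognizably different route from the paper's, although both hinge on the same two facts: smoothness of $m$ on the embedded submanifold $G^{(2)}$, and tangency to $G^{(2)}$ of vectors of the form $(0_g,v)$ with $v$ vertical for $\za$ (resp.\ $(v,0_g)$ with $v$ vertical for $\zb$). The paper argues via test functions: it fixes a smooth $f$ on $G$, \emph{extends} $\bar{X}^\za$ to a globally defined smooth vector field $Y$ on $G$ tangent to the $\za$-fibers, checks that $(0,Y)$ is tangent to $G^{(2)}$ (using $\T\za(Y)=0=\T\zb(0)$, i.e.\ the fiber-product description of $\T G^{(2)}$), and concludes from the identity $\lvec{X}(f)(g)=(0,Y)(f\circ m)(g,\zb(g))$ that $\lvec{X}(f)$ is smooth, invoking the derivation criterion for smoothness of a vector field. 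You instead exhibit $\lvec{X}$ literally as a composition of smooth maps, $\lvec{X}=\T m\circ\hat\varsigma$ with $\hat\varsigma(g)=(0_g,\bar{X}^\za(\zb(g)))$, verifying tangency by the slice observation that $\{g\}\ti\cF^\za(\zb(g))\subset G^{(2)}$. This buys you two things: you never need the extension of $\bar{X}^\za$ to a global vertical field (a standard partition-of-unity fact, but asserted without proof in the paper), since pulling the section back along $\zb$ already gives a smooth map $G\to\T G$; and you get the useful identity $\T m(0_g,v)=(\T_{\zb(g)}l_g)(v)$ explicitly, which reconciles the composite with the definition of $\lvec{X}$. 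What the paper's version buys is brevity once the extension is granted, and a direct parallel with Kubarski's proof for Lie groupoids, which it cites. Two small points in your write-up are worth making fully precise: the corestriction of $\hat\varsigma$ to $\T G^{(2)}$ is smooth \emph{because} $G^{(2)}$ is assumed embedded (so $\T G^{(2)}$ is embedded in $\T(G\ti G)$), and the identity $\za(\zb(g))=\zb(g)$ follows formally from the axiom that $g\zb(g)$ is defined, rather than from an abstract appeal to $\zb(g)$ being a unit; neither affects correctness.
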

	\begin{proof}
		Let $f$ be a smooth function on $G$. We will show that $\lvec{X}(f)$ is smooth. The normal bundle $AG$ and the tangent bundle of $\za$-fibration along $M$ are canonically isomorphic and we can take a smooth vector field $Y$ on $G$ which  is tangent to $\za$-fibers and extends $\bar{X}^\za$ on $M$\,. The latter is a smooth section of the subbundle of tangent spaces to $\za$-fibration along $M$\,. We have
		$$\lvec{X}(f)(g)=(\sT_{\zb(g)}l_g)\left(\bar{X}^\za(\zb(g)\right)(f)=(\sT_{\zb(g)}l_g)\left(Y(\zb(g)\right)(f)=Y(f\circ l_g)(\zb(g))\,,$$
		where $f\circ l_g:\cF^\za(\zb(g))\to \R$\,.
		But
		$$Y(f\circ l_g)(\zb(g))=(0,Y)(f\circ m)(g,\zb(g))$$
		which clearly depends smoothly on $g$. Here $(0,Y)$ is a self-explaining notation for a vector field on $G\ti G$ and $m$ is the quasiloopoid multiplication. Note that the vector field $(0,Y)$ on $G\ti G$ is really tangent to $G^{(2)}$
		as $\sT\za (Y)=0=\sT\zb (0)$\,.  One can also follow the proof of Theorem 2.1 in \cite{Kub}.
	\end{proof}
 The fact that the left and right translations are immersions implies the following.
\begin{proposition}\label{preg}
For any smooth quasiloopoid $G$ the maps $\lvec{X}(\zb(g))\mapsto \lvec{X}(g)$ and $\rvec{X}(\za(h))\mapsto \rvec{X}(h)$ are linear isomorphisms of vector spaces. In other words, for any $g$, the vector spaces $\{\lvec{X}(g)\,|\, X\in A_{\zb(g)}G\}$ and $\{\rvec{X}(h)\,|\, X\in A_{\za(h)}G\}$ are canonically isomorphic with
the fibers $A_{\zb(g)}G$ and $A_{\za(h)}G$\,, respectively.
\end{proposition}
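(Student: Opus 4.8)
The plan is to prove Proposition \ref{preg} by identifying the two assignments $\lvec{X}(\zb(g))\mapsto\lvec{X}(g)$ and $\rvec{X}(\za(h))\mapsto\rvec{X}(h)$ as restrictions of the tangent maps of the translations, and then invoking the injective-immersion property that holds for any quasiloopoid. First I would set up the left case. By definition $\lvec{X}(g)=(\sT_{\zb(g)}l_g)(\bar{X}^\za(\zb(g)))$, and for $g$ a unit, $\lvec{X}(\zb(g))=\bar{X}^\za(\zb(g))$ because $l_{\zb(g)}$ is the identity near $\zb(g)$ along the $\za$-fibre (this uses $\zb(g)\,z=z$ for composable $z$). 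Hence the map $\lvec{X}(\zb(g))\mapsto\lvec{X}(g)$ is literally $v\mapsto(\sT_{\zb(g)}l_g)(v)$ restricted to the subspace $\{\bar{X}^\za(\zb(g))\,|\,X\in A_{\zb(g)}G\}\cong A_{\zb(g)}G$.

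Next I would argue linearity and injectivity. Linearity is immediate since $\sT_{\zb(g)}l_g$ is a linear map between tangent spaces and $X\mapsto\bar{X}^\za$ is the canonical linear (bundle) isomorphism $A_{\zb(g)}G\cong\sT\cF^\za_{|M}$ noted after the definition of the left prolongation. For injectivity, the quasiloopoid axioms guarantee that $l_g:\cF^\za(\zb(g))\to G$ is an injective immersion; therefore its differential $\sT_{\zb(g)}l_g$ is injective on the whole tangent space $\sT_{\zb(g)}\cF^\za(\zb(g))$, and in particular on the subspace image of $A_{\zb(g)}G$. Thus $\lvec{X}(\zb(g))\mapsto\lvec{X}(g)$ is an injective linear map. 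Since both source and target of this assignment are the finite-dimensional space of left fundamental vectors (of equal dimension $\rk AG$), injectivity forces bijectivity, giving the claimed linear isomorphism; the identification with the fibre $A_{\zb(g)}G$ then follows by composing with the canonical isomorphism $A_{\zb(g)}G\cong\{\bar{X}^\za(\zb(g))\}$. The right case is entirely symmetric, using that $r_h$ is an injective immersion and $\rvec{X}(\za(h))=\bar{X}^\zb(\za(h))$.

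The only genuine subtlety — which I expect to be the main (minor) obstacle — is verifying carefully that evaluating the left prolongation at the unit $\zb(g)$ really returns $\bar{X}^\za(\zb(g))$, i.e. that $l_{\zb(g)}$ acts as the identity with identity differential on the relevant $\za$-fibre. This rests on the unity property $g\zb(g)=g$ together with the structural identity $\za(\zb(g)\,z)=\za(z)$ so that $l_{\zb(g)}$ maps $\cF^\za(\zb(g))$ to itself fixing each point, whence its differential at $\zb(g)$ is the identity on $\sT_{\zb(g)}\cF^\za(\zb(g))$. Everything else is a formal consequence of linearity of tangent maps and the injective-immersion hypothesis, so no associativity or inversion is needed and the statement holds at the level of quasiloopoids.
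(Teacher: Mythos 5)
Your proof is correct and follows essentially the paper's own route: the paper justifies this proposition with the single remark that the left and right translations are injective immersions, and your write-up simply makes the implicit details explicit (that $\lvec{X}(\zb(g))=\bar{X}^\za(\zb(g))$ because $l_{\zb(g)}$ fixes $\cF^\za(\zb(g))$ pointwise, and that the assignment is then the restriction of the injective linear map $\sT_{\zb(g)}l_g$, hence a bijection onto its image). One trivial slip worth fixing: the left case rests on the left-unit axiom $\za(h)h=h$ (which gives $\zb(g)z=z$), not on $g\zb(g)=g$, and you need not invoke $\za(\zb(g)z)=\za(z)$ as a separate structural identity — it is not an axiom for general quasiloopoids, but here it holds trivially since $\zb(g)z=z$.
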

	In the case of quasiloopoids, the left (right) prolongations are no longer left (right) invariant and are not closed with respect to the Lie bracket of vector fields on $G$, as $G$ is not associative. Also the left and right prolongations do not commute.
	However, we still can define two brackets, $[\cdot,\cdot]_l$ and $[\cdot,\cdot]_r$\,, on sections of $AG$ by putting
	$$
	{[X,Y]}_l(a)=[\lvec{X},\lvec{Y}](a)\,,\quad {[X,Y]}_r(a)=[\rvec{X},\rvec{Y}](a)\,,\ a\in M\,.
	$$
	We will denote $AG$ equipped with these brackets by $A^lG$ and $A^rG$, respectively.
	The fundamental fact is the following.
	\begin{theorem}
		The above brackets are skew algebroid brackets on $AG$ with the anchors $\zr_l(X)=\T\zb(\bar X^\za)$ and $\zr_r(X)=\T\za(\bar X^\zb)$\,. We have additionally $\zr_r=-\zr_l$\,.
	\end{theorem}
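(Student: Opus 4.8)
The plan is to check the skew algebroid axioms through the equivalent description of Theorem \ref{ty}: it suffices to produce an $\R$-bilinear, skew-symmetric bracket on $\Sec(AG)$ together with a vector bundle morphism $AG\to\T M$ obeying the Leibniz rule. Skew-symmetry and $\R$-bilinearity of both $[\cdot,\cdot]_l$ and $[\cdot,\cdot]_r$ are inherited at once from the Lie bracket of vector fields and from the $\R$-linearity of the prolongation maps $X\mapsto\lvec X$ and $X\mapsto\rvec X$. That these brackets are genuine sections of $AG$ follows from the smoothness of $\lvec X$ and $\rvec X$ established above, together with the smoothness of the quotient projection $\T G_{|M}\to AG=\T G_{|M}/\T M$; concretely, $[X,Y]_l(a)$ is the class of $[\lvec X,\lvec Y](a)$ modulo $\T_aM$.

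Before the main computation I would record the behaviour at units. For $a\in M$ we have $\za|_M=\zb|_M=\Id$, and since $\za(h)h=h$ the left translation $l_a$ is the identity on $\cF^\za(a)$; hence $\lvec X(a)=\T_al_a(\bar X^\za(a))=\bar X^\za(a)\in\ker\T_a\za$, a representative of $X(a)$ in $A_aG$, and symmetrically $\rvec X(a)=\bar X^\zb(a)$. The Leibniz rule then rests on the identity
\[
\lvec{fY}=(f\circ\zb)\,\lvec Y\,,\qquad f\in C^\infty(M)\,,
\]
which holds because $\overline{fY}^\za=f\,\bar Y^\za$ over $M$ and $\T l_g$ is linear, the scalar being evaluated at $\zb(g)$. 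Expanding
\[
[\lvec X,\lvec{fY}]=(f\circ\zb)\,[\lvec X,\lvec Y]+\lvec X(f\circ\zb)\,\lvec Y
\]
and evaluating at $a\in M$, I would use $\lvec Y(a)=\bar Y^\za(a)$, $f(\zb(a))=f(a)$, and the chain rule $\lvec X(f\circ\zb)(a)=\big(\T_a\zb(\bar X^\za(a))\big)(f)=\zr_l(X)(a)(f)$. Projecting this equality in $\T_aG$ down to $A_aG$, and using that $\bar Y^\za(a)$ represents $Y(a)$, yields
\[
[X,fY]_l=\zr_l(X)(f)\,Y+f\,[X,Y]_l\,.
\]
Thus $[\cdot,\cdot]_l$ is a skew algebroid bracket with anchor $\zr_l(X)=\T\zb(\bar X^\za)$, which is a bona fide bundle morphism since $\bar X^\za(a)$ depends $C^\infty(M)$-linearly on $X$. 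The statements for $[\cdot,\cdot]_r$ and $\zr_r$ are obtained by exchanging $\za\leftrightarrow\zb$ and $l\leftrightarrow r$ throughout.

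It remains to prove $\zr_r=-\zr_l$, which I would do pointwise. Fix $a\in M$ and $X\in\Sec(AG)$; the vectors $\bar X^\za(a)\in\ker\T_a\za$ and $\bar X^\zb(a)\in\ker\T_a\zb$ both represent $X(a)$ in $\T_aG/\T_aM$, so $\bar X^\za(a)-\bar X^\zb(a)=v$ for some $v\in\T_aM$. Applying $\T_a\zb$ and using $\T_a\zb(\bar X^\zb(a))=0$ together with $\zb|_M=\Id$ gives $\zr_l(X)(a)=\T_a\zb(\bar X^\za(a))=v$, while applying $\T_a\za$ and using $\T_a\za(\bar X^\za(a))=0$, $\za|_M=\Id$ gives $\zr_r(X)(a)=\T_a\za(\bar X^\zb(a))=-v$. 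Hence $\zr_r=-\zr_l$.

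The step demanding the most care is the passage to the quotient $AG$: because $G$ is nonassociative, the identity $\za(gh)=\za(g)$ fails, so $\lvec X$ is not tangent to the $\za$-fibres away from $M$ and $[\lvec X,\lvec Y](a)$ is merely a vector in $\T_aG$ rather than an element of $\ker\T_a\za$. One must verify that the Leibniz computation is compatible with the projection modulo $\T_aM$; this works precisely because the anchor term carries the representative $\bar Y^\za(a)$ of $Y(a)$, so no information is lost under the quotient. Everything else reduces to the linearity of prolongation and the chain rule.
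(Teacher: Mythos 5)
Your proposal is correct and takes essentially the same route as the paper: skew-symmetry and $\R$-bilinearity inherited from the Lie bracket, the Leibniz rule via $\lvec{fY}=(f\circ\zb)\,\lvec{Y}$ with the chain-rule computation $\lvec{X}(f\circ\zb)(a)=\T_a\zb(\bar{X}^\za)(f)$ identifying the anchor, and the relation $\zr_r=-\zr_l$ from the observation that $\bar{X}^\za-\bar{X}^\zb$ is tangent to $M$ (the paper names this vector $\zr(X)$ and applies $\T\zb$ and $\T\za$ to it exactly as you do). Your explicit remarks on the quotient projection to $AG$ and on $l_a=\Id$ on $\cF^\za(a)$ only spell out steps the paper leaves implicit.
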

	\begin{proof}
		The brackets are clearly skew-symmetric and $\R$-bilinear, as $\lvec{cX}=c\lvec{X}$, $\rvec{cX}=c\rvec{X}$ for $c\in\R$. More generally, if $f$ is a smooth function on $M$, then $\lvec{fX}=f^\zb\lvec{X}$, where $f^\zb=f\circ\zb$ is the $\zb$-pull-back of $f$ to $G$.
		In consequence,
		$$
		{[X,fY]}_l^\za(a)=[\lvec{X},\lvec{fY}](a)=[\lvec{X},f^\zb\lvec{Y}](a)=f(a)[\lvec{X},\lvec{Y}](a)+\lvec{X}(f^\zb)(a)\bar{Y}^\za(a)\,.
		$$
		As
		$$\lvec{X}(f^\zb)(a)=\lvec{X}(f\circ\zb)(a)=\T_a\zb(\bar{X}^\alpha)(f)=\zr_l(X)(f)(a)\,,$$
		we get
		$$
		{[X,fY]}_l=f[X,Y]_l+\zr_l(X)(f)Y\,,
		$$
 which shows that $\zr_l$ is the anchor for the left bracket. Similarly for the right bracket.
		For $X\in AG$  we put
$$\zr(X)=\bar{X}^{\za}-\bar{X}^{\zb}\,.
$$
As $\bar{X}^{\za}$ and $\bar{X}^{\zb}$ represent the same normal vector to $M$, we have $\zr(X)\in\T M$. Then,
		$$\zr_l(X)=\T\zb(\bar X^\za)=\T\zb\left(\bar X^\za-\bar{X}^\zb\right)=\zr(X)\,.$$
		Similarly we show that
		$$\zr_r(X)=\T\za(\bar X^\zb)=\T\za\left(\bar X^\zb-\bar{X}^\za\right)=-\zr(X)\,.$$
	\end{proof}
	As the two anchors differ only by sign, we will call the left anchor just an anchor and denote it by $\zr$.

 \begin{theorem}
		For a smooth local loop $G$ the left and the right skew infinitesimal algebras differ by sign, $[X,Y]_l=[\lvec{X},\lvec{Y}](e)=-[\rvec{X},\rvec{Y}](e)=-[X,Y]_r$\,. Moreover, $[\lvec{X},\rvec{Y}](e)=0$ for all $X,Y\in AG$\,.
	\end{theorem}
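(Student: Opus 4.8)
The plan is to reduce all three identities to a single second-order Taylor computation at the unit, exploiting the fact that each bracket is evaluated \emph{only} at $e$, and hence depends on the fundamental vector fields solely through their values and first derivatives there. First I would pass to local coordinates centred at $e=0$ in $\R^n$, so that $AG=\T_eG\cong\R^n$ (over a single point $\T_eM=0$, so the representative $\bar X^\za(e)\in\T_e\cF^\za(e)=\T_eG$ is just $X$). Over a point the $\za$- and $\zb$-fibres coincide with $G$ and the translations are the ordinary $l_g(x)=m(g,x)$, $r_g(x)=m(x,g)$, whence $\lvec X(g)=(\T_0l_g)X$ and $\rvec X(g)=(\T_0r_g)X$. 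The unit axioms $m(0,y)=y$ and $m(x,0)=x$ annihilate every pure power of $x$ and of $y$ in the Taylor expansion except the two linear ones, so that
\[
m(x,y)=x+y+B(x,y)+\text{(cubic and higher)}\,,
\]
where $B$ is bilinear, the degree-$2$ part carrying no $xx$ or $yy$ component.

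Next I would linearise the fundamental fields. Differentiating $m$ in its second, respectively first, slot and evaluating on the fixed vector gives $\lvec Y(g)=Y+B(g,Y)+O(g^2)$ and $\rvec Y(g)=Y+B(Y,g)+O(g^2)$; consequently the Jacobians at $e$ are the linear maps $Z\mapsto B(Z,Y)$ for $\lvec Y$ and $Z\mapsto B(Y,Z)$ for $\rvec Y$. The only remaining ingredient is the coordinate formula for the bracket at the base point, $[V,W](e)=DW(e)\!\cdot\!V(e)-DV(e)\!\cdot\!W(e)$, valid because $e=0$. Substituting the three cases, and using $\lvec X(e)=\rvec X(e)=X$, yields
\[
[X,Y]_l=B(X,Y)-B(Y,X)\,,\qquad [X,Y]_r=B(Y,X)-B(X,Y)\,,
\]
so that $[X,Y]_l=-[X,Y]_r$, while for the mixed bracket $[\lvec X,\rvec Y](e)=D\rvec Y(e)\!\cdot\!X-D\lvec X(e)\!\cdot\!Y=B(Y,X)-B(Y,X)=0$. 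These are exactly the three assertions.

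The substantive point --- and the place where one must resist importing the Lie-group argument --- is the mixed bracket. For Lie groups $[\lvec X,\rvec Y]$ vanishes \emph{identically} because associativity makes left and right translations commute, and $[X,Y]_l=-[X,Y]_r$ follows from the inversion $\zi$ with $\T_e\zi=-\Id$; neither tool is available for a loop, and indeed $[\lvec X,\rvec Y]$ is generally nonzero away from $e$. The vanishing at $e$ is therefore not a commutation phenomenon but the second-order coincidence $D\rvec Y(e)\!\cdot\!X=D\lvec X(e)\!\cdot\!Y=B(Y,X)$, and the sign relation is simply the antisymmetry built into the bracket formula together with the fact that the left field feels $B$ through its first slot and the right field through its second. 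Thus the main obstacle is conceptual rather than computational: one has to recognise that only the $2$-jet of $m$ at $e$ enters, and that the left/right asymmetry of the two slots of $B$ is precisely what produces both the sign flip and the cancellation, so no global structure (associativity, inverses) is needed or used.
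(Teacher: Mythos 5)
Your proof is correct and takes essentially the same route as the paper's: the paper Taylor-expands the multiplication at the unit, uses the unit axioms to reduce it to $(x\bullet y)^k=x^k+y^k+c^k_{ij}x^iy^j+\cdots$, expands the fundamental fields to first order as $\lvec{\pa_{x^i}}=\pa_{x^i}+c^k_{ji}x^j\pa_{x^k}+O(|x|^2)$ and $\rvec{\pa_{x^i}}=\pa_{x^i}+c^k_{ij}x^j\pa_{x^k}+O(|x|^2)$, and reads off all three brackets at $0$, exactly as you do. Your bilinear form $B(x,y)$ is just a coordinate-free rewriting of the paper's structure constants $c^k_{ij}$, and your diagnosis that only the $2$-jet of $m$ enters (with the sign flip and the mixed-bracket cancellation coming from the two slots of $B$) is the same mechanism the paper's index computation exhibits.
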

	\begin{proof}
		Consider a local loop multiplication in a neighborhood of $0$ in $\R^n$ with $0$ as the neutral element with the Taylor expansion around $0$
		$$(x\bullet y)^k=a^k+b^k_ix^i+d^k_jy^j+c^k_{ij}x^iy^j+e^k_{ij}x^ix^j+f^k_{ij}y^iy^j+o(\nm{x}^2)+o(\nm{y}^2)+ o(|\bra{x}{y}\ran|)\,.$$
		According to the fact that $0$ is the unit, we have $a^k=0$\,, $b^k_i=d^k_i=\zd^k_i$\,, $o(\nm{x}^2)=0$\,, $o(\nm{y}^2)=0$\,, $e^k_{ij}=f^k_{ij}=0$\,, so that
		$$(x\bullet y)^k=\zd^k_ix^i+\zd^k_jy^j+c^k_{ij}x^iy^j+o(||\bra{x}{y}\ran||)\,.$$
		Consequently,
		$$\lvec{\pa_{x^i}}=\pa_{x^i}+c^k_{ji}x^j\pa_{x^k}+v_i$$
and
$$\rvec{\pa_{x^i}}=\pa_{x^i}+c^k_{ij}x^j\pa_{x^k}+w_i\,,$$
		where $v_i,w_i$ is at least quadratic in $x$. Hence,
		$$[\lvec{\pa_{x^i}},\lvec{\pa_{x^j}}](0)=(c^k_{ij}-c^k_{ji})\pa_{x^k}$$
		and
		$$[\rvec{\pa_{x^i}},\rvec{\pa_{x^j}}](0)=(c^k_{ji}-c^k_{ij})\pa_{x^k}\,.$$
		As we see, the structure constants of the skew brackets are skew-symmetrization of $c^k_{ij}$ with respect to lower indices. Moreover,
$$[\lvec{\pa_{x^i}},\rvec{\pa_{x^j}}](0)=(c^k_{ji}-c^k_{ji})\pa_{x^k}=0\,.$$
	\end{proof}
	\begin{example}
		Consider the smooth inverse loop defined in the example \ref{e11}. If we take $\{\pa_{x^i}\}_{i=1,...,n}$ as a basis for $\mathbb R^n$, then the left and right prolongations are
		\[
		\lvec {\pa_{x^i}}(x)=\pa_{x^i}-\frac{1}{2}C^k_{ij}x^j\pa_{x^k\,},\quad \rvec {\pa_{x^i}}(x)=\pa_{x^i}+\frac{1}{2}C^k_{ij}x^j\pa_{x^k}\,,\quad x\in\mathbb R^n\,,
		\]
		where $x^j$'s are the coefficients of the vector field $x=x^j\pa_{x^j}\in \sT \mathbb R^n$\,.
		Then, the skew bracket associated to the skew algebra is
		\[
		[\lvec {\pa_{x^i}},\lvec {\pa_{x^j}}]=-[\rvec {\pa_{x^i}},\rvec {\pa_{x^j}}]=\frac{1}{2}(C_{ij}^k-C^k_{ji})\pa_{x^k}\,,
		\]
		where $\frac{1}{2}(C^k_{ij}-C^k_{ji})$ are structure constants of the skew symmetric bracket.
	\end{example}	
	We could not find an example of a loopoid with entirely different left and right tangent algebras, so we propose the following conjecture.
	 \begin{conjecture}\label{c1}
		The left and right skew algebroids of a smooth loopoid differ by sign, $[X,Y]_l=-[X,Y]_r$\,.
	\end{conjecture}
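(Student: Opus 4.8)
The plan is to establish the identity pointwise, by a local computation at each unit $a\in M$ that parallels the Taylor-expansion argument already used above for smooth local loops. Since $\za$ is a surjective submersion and $M$ is a section transverse to the $\za$-fibres, I would first choose local coordinates $(x^b,y^i)$ on $G$ around $a$ in which $\za(x,y)=x$, the units are $M=\{y=0\}$, and the unit embedding reads $x\mapsto(x,0)$. Then the $\za$-fibres are the slices $\{x=\text{const}\}$, the normal bundle $AG=\zn(G,M)$ is framed along $M$ by the classes of the $\pa_{y^i}$, and, writing the target as $\zb^c(x,y)=x^c+p^c_i(x)y^i+O(y^2)$, one gets $\bar X^\za=\pa_{y^i}$ and $\bar X^\zb=\pa_{y^i}-p^c_i(x)\pa_{x^c}$ for $X=\pa_{y^i}$, which recovers the anchor $\zr(\pa_{y^i})=p^c_i\pa_{x^c}$.

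Because $G$ is a loopoid, the unities-associativity property (established in the Proposition above) gives $\za(gh)=\za(g)$ and $\zb(gh)=\zb(h)$, so for composable $g=(x,y)$, $h=(x',y')$ with $x'=\zb(x,y)$ the product takes the form $gh=(x,F(x,y,y'))$. The unit axioms $g\zb(g)=g$ and $\za(h)h=h$ translate into $F(x,y,0)=y$ and $F(x,0,y')=y'$ \emph{for all} $x$, exactly the two conditions that, as in the local-loop theorem, force
$$F^k(x,y,y')=y^k+y'^k+c^k_{ij}(x)y^iy'^j+\text{(cubic and higher)}.$$
Next I would compute the two fundamental vector fields to first order. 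As $\lvec X$ is tangent to the $\za$-fibres it has no $\pa_x$-part, and differentiating $l_g$ gives $\lvec{\pa_{y^i}}=(\zd^k_i+c^k_{li}(x)y^l+O(y^2))\pa_{y^k}$; as $\rvec X$ is tangent to the $\zb$-fibres it carries an anchor component along $M$, and differentiating $r_g$ along the parametrised $\zb$-fibre gives $\rvec{\pa_{y^i}}=-p^c_i(x)\pa_{x^c}+(\zd^k_i+c^k_{il}(x)y^l+O(y^2))\pa_{y^k}$. The crucial simplification here is that $F(x,0,y')=y'$ holds for \emph{every} base point $x$, so $\pa_{x^c}F^k(x,0,y')=0$ and the base direction of the $\zb$-fibre does not feed into the fibre coefficients of $\rvec{\pa_{y^i}}$.

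Finally I would take Lie brackets at the unit $(a,0)$ and project onto $AG$, i.e. discard the $\pa_x$ (tangent-to-$M$) components. For the left field this is immediate and yields $[\pa_{y^i},\pa_{y^j}]_l(a)=(c^k_{ij}(a)-c^k_{ji}(a))\pa_{y^k}$. For the right field one must check that its $\pa_x$-part, acting on the $x$-dependent fibre coefficients $c^k_{il}(x)$, contributes nothing to the $\pa_{y^k}$-component along $M$ (each such coefficient multiplies a factor $y$, so its $x$-derivative vanishes at $y=0$), while the $p^c_i(x)$ affect only the discarded $\pa_x$-part; what survives is $[\pa_{y^i},\pa_{y^j}]_r(a)=(c^k_{ji}(a)-c^k_{ij}(a))\pa_{y^k}$, the negative of the left bracket. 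Matching on a local frame, together with the already-proved relation $\zr_r=-\zr_l$, then extends by the Leibniz rule to $[X,Y]_l=-[X,Y]_r$ for all sections. I expect the main obstacle to be exactly the bookkeeping of this anchor (base) direction in $\rvec X$: unlike the loop case, $\rvec X$ is not fibre-tangent in the $\za$-adapted coordinates, so one must verify rigorously that its transverse component, together with all cubic-and-higher terms of $F$, drops out of the fibre part of the bracket along $M$ — and this is precisely the step where the loopoid (rather than merely quasiloopoid) hypothesis, through $\za(gh)=\za(g)$ and $\zb(gh)=\zb(h)$, is indispensable.
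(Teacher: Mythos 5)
You should first be aware that the paper contains \emph{no} proof of this statement: it is deliberately presented as a conjecture, supported only by two special cases established in the text --- the case of smooth (local) loops, proved by precisely the Taylor--expansion argument you are relativizing over $M$, and the case of I.P.~loopoids, proved by an entirely different global argument based on $l_g=\zi\circ r_{g^{-1}}\circ\zi$ together with the lemma $\T_a\zi(\bar{X}^\za_a)=-\bar{X}^\zb_a$. So there is no ``paper route'' for the general case to compare against; your proposal, if correct, settles the conjecture rather than reproving it.

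Having checked the details, I find the computation sound. The adapted chart exists (straighten the submersion $\za$, then translate the fibre coordinates by the local section whose image is $M$), and the two vanishing statements you isolate are exactly the crux, both following from the unit axioms holding \emph{identically in the base variable}: $F(x,0,y')=y'$ for all $x$ gives $\pa_{x^c}F^k(x,0,y')=0$, so the transverse part $-p^c_i(x)\pa_{x^c}$ of $\bar{X}^\zb$ does not feed into the fibre coefficients of $\rvec{X_i}=-p^c_i(x)\pa_{x^c}+B^k_i(x,y)\pa_{y^k}$, where $B^k_i(x,y)=\pa F^k/\pa y^i(x,0,y)$; and $F(x,y,0)=y$ gives $B^k_i(x,0)=\zd^k_i$, so the $\pa_x$-part of one right field annihilates the fibre coefficients of the other along $M$. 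The sign flip is then driven by Schwarz symmetry: the single mixed Hessian $c^k_{ij}(x)=\pa^2F^k/\pa y^i\pa y'^j(x,0,0)$ enters the left fields as $c^k_{li}(x)y^l$ and the right fields as $c^k_{il}(x)y^l$, so the brackets at units are the opposite skew-symmetrizations $c^k_{ij}-c^k_{ji}$ and $c^k_{ji}-c^k_{ij}$; the cubic remainders vanish on $\{y=0\}\cup\{y'=0\}$ and therefore cannot affect the first derivatives of the coefficient matrices along $M$, and the final Leibniz argument with $\zr_r=-\zr_l$ correctly promotes the frame identity to arbitrary sections. Two remarks. First, your closing attribution is slightly off: nowhere do you use that the translations are fibre-to-fibre \emph{diffeomorphisms} --- only the unities-associativity consequence $\za(gh)=\za(g)$ (to write $gh=(x,F(x,y,y'))$) and the two unit axioms --- so your argument actually yields the stronger statement that the identity holds for every quasiloopoid satisfying unities associativity; this over-strength should be stated explicitly and treated as a prompt for one more careful pass, precisely because you are resolving a question the authors left open. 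Second, as sanity checks, your formulas reduce over a point to the paper's own proof for local loops, and they reproduce the paper's explicit loopoid example $G=H\ti\R^2\ti\R^2$, where $[X_1,X_2]_l=-[X_1,X_2]_r$ was verified by direct computation.
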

\begin{example}
		Consider the left inverse quasiloopoid of the Example \ref{almost}. We will characterize its infinitesimal counterpart. The source and the target maps are
		\[
		\za\left((a_1,b_1),(a_2,b_2)\right)=(a_1,b_1)\,, \quad \zb\left((a_1,b_1),(a_2,b_2)\right)=(a_2,b_2)\,.
		\]
		The tangent space of $G$ is
		\[
		\T G=\left\lbrace \left( \left( (a_1,b_1),(a_2,b_2)\right),\left( ( \dot a_1,\dot b_1),(\dot a_2,\dot b_2)\right)\right)  :\quad  \dot a_1-\dot a_2=\phi'(b_1-b_2)(\dot b_1-\dot b_2) \right\rbrace\,.
		\]
Since the projection $\alpha$ is constant on the first component, then $\dot a_1=\dot b_1=0$ which implies $\dot a_2=\phi '(b_1-b_2)\dot b_2$\,. On the other hand the target and the source maps coincide on the base $\mathbb R^2$ which means $a_1=a_2$ and $b_1=b_2$ on $\mathbb R^2$\,, so the vector bundle $AG=Ker(\T \za)\bigcap \T G$ over $\mathbb R^2$ is
		\[
		AG=\left\lbrace
		\left(
		\left( (a_1,b_1),(a_1,b_1)\right),\left( ( 0,0),(\phi'(0)\dot b_2,\dot b_2)\right)\right)  :\quad c(x,y):=\dot b_2
		\right\rbrace.
		\]
		Let $\{\partial_x,\partial_y\}$ be a basis of the tangent bundle of $\mathbb R^2$. The sections of $AG$ are of the form
		\[
		X=c(x,y)(\phi'(0)\partial_x+\partial_y)\,,
		\]
		which are one dimensional vector fields on $\mathbb R^2$. The vector field $Y=\phi'(0)\partial_x+\partial_y$ is the generator of bundle $AG$ and any other section of $AG$ is of the form $c(x,y)Y$ with the bracket
		\[
		[c_1(x,y)Y,c_2(x,y)Y]=c_1(x,y)\left( \phi'(0)\partial_x (c_2)+\partial_y (c_2)\right)Y+c_2(x,y)\left( \phi'(0)\partial_x (c_1)+\partial_y (c_1)\right)Y\,.
		\]
Thus, the skew algebroid structure on $AG$\,, as a vector bundle of rank one, depends only on the anchor map. Therefore,
		\[
		[X,fX]=(\rho(X)f)X=(\phi'(0)\partial_x f+\partial_y f)X\,, \quad f\in C^{\infty}(\mathbb R^2)\,,
		\]
		which means, $\rho (X)=\phi'(0)\partial_x+\partial_y$\,. Note that
		\[
		\rho[X,fX]=[\rho(X),\rho(fX)]=(\rho(X)f)\rho(X)\,.
		\]
		In this one dimension case the anchor map is an algebra homomorphism, therefore $AG$ is an almost-Lie algebroid.
		
	\end{example}

\begin{example}\label{Ex.loopoid}
		On $\mathbb R^2$ consider a smooth local loop structure $H$ defined by the multiplication
		\[
		x.y=(x_1,x_2).(y_1,y_2)=(x_1+y_1+x_1y_2,x_2+y_2+x_2y_1)\,,
		\]
		and the identity $(0,0)$\,. Taking $X_1=\pa_{x_1}$ and $X_2=\pa_{x_2}$ as a basis for $\sT_0 H$\,, we get
		\beas
		&\lvec {X_1}(x_1,x_2)=\pa_{x_1}+x_2\pa_{x_2}\,, \quad 	\lvec {X_2}(x_1,x_2)=x_1 \pa_{x_1}+\pa_{x_2}\,,\\ 	
&\rvec {X_1}(x_1,x_2)=(1+x_2)\pa_{x_1}\,,\quad	\rvec {X_2}(x_1,x_2)=(1+x_1)\pa_{x_2}\,.
		\eeas
Hence,
$$[\lvec{ X_1}, \lvec {X_2}]{(x_1,x_2)}=-[\rvec {X_1}, \rvec{X_2}]{(x_1,x_2)}=(\pa_{x_1}-\pa_{x_2})\,,$$
so that the bracket for the corresponding skew algebra $\mathfrak h\cong \sT_0 H$ is
		$$[X_1, X_2]=[\lvec{ X_1}, \lvec {X_2}]{(0,0)}=\pa_{x_1}-\pa_{x_2} =X_1-X_2\,.$$
		
		Using  the loop structure of $H$ and the pair groupoid $\mathbb R^2 \times\mathbb R^2 \rightrightarrows \mathbb R^2$, we define a smooth loopoid $G:=H\times \mathbb R^2\times \mathbb R^2$ over $M=\{(0,t,t)| t\in \mathbb R^2\}$ with the source map $\za(x,t,s)=(0,t,t)$\,, the target map $\zb(x,t,s)=(0,s,s)$\,, and the multiplication
		\[
		(x,t,s)\bullet(x',s,r)=(x.x',t,r)\,,
		\]
		which means
		\[
		((x_1,x_2),t,s)\bullet((x'_1,x'_2),s,r)=
		\left(  (x_1+x'_1+x_1x'_2,x_2+x'_2+x_2x'_1),t,r\right).
		\]
Let us introduce coordinates $(x_1,x_2,x_3,x_4,x_5,x_6)$ on the smooth loopoid $G=H\ti\R^2\ti\R^2$. The vector bundle $AG$ is the trivial vector bundle of rank four over $M=\R^2$\,. As a basis of its sections let us take the vector fields
 $$X_1=[\pa_{x_1}]\,,\quad  X_2=[\pa_{x_2}]\,,\quad X_3=[\pa_{x_3}]=[\pa_{x_5}]\,,\quad X_4=[\pa_{x_4}]=[\pa_{x_6}]\,,$$
along the submanifold $M$. Here, $[\pa_{x_i}]$ denotes the class of the vector field $\pa_{x_i}$ in the normal bundle of $M$ in $G$. Note that the representative $\pa_{x_1}\,,\pa_{x_2}\,,\pa_{x_5}\,,\pa_{x_6}$ are tangent to $\za$-fibers, and $\pa_{x_1}\,,\pa_{x_2}\,,\pa_{x_3}\,,\pa_{x_4}$ are tangent to $\zb$-fibers.
The left prolongations are
		\beas
		&\lvec{X_1}=\pa_{x_1}+x_2\pa_{x_2}\,, \quad \lvec{X_2}=x_1\pa_{x_1}+\pa_{x_2}\,,\\
		&\lvec{X_3}=\pa_{x_5}\,,\quad \lvec{X_4}=\pa_{x_6}\,.
		\eeas
		The only non-zero bracket is
$$[\lvec{X_1},\lvec{X_2}]=\pa_{x_1}-\pa_{x_2}=\lvec{X_1}-\lvec{X_2}\,.$$
Eventually, we get the brackets on sections of $A^lG$\,:
	\[
		{[X_1,X_2]}_l(0,t,t)=(\lvec {X_1}-\lvec {X_2})(0,t,t)=X_1(0)-X_2(0)\,.\]
 The rest of the brackets are zero.

\noindent Now, identifying the vector bundle $AG$ with $\sT\cF^\zb\,\big|_M$\,, we get the right prolongations
		\beas
		&\rvec {X_1}=(1+x_2)\pa_{x_1}\,, \quad \rvec {X_2}=(1+x_1)\pa_{x_2}\,,\\
		&\rvec {X_3}=\pa_{x_3}\,, \quad \rvec {X_4}=\pa_{x_4}\,.
		\eeas
The only non-zero bracket is
 $$[\rvec{X_1},\rvec{X_2}]=(1+x_2)\pa_{x_2}-(1+x_1)\pa_{x_1}=\rvec{X_2}-\rvec{X_1}\,.$$
This induces the following bracket on sections of $A^rG$:		
 		\[{[X_1,X_2]}_r{(0,t,t)}=\pa_{x_2}-\pa_{x_1}=(\rvec {X_2}-\rvec {X_1})(0,t,t)=X_2(0)-X_1(0)\,.\]
		Therefore, we have
		\[
		[X_1,X_2]_l=-[X_1,X_2]_r=X_1-X_2\,,
		\]
and the rest of the brackets equal zero.
		
	\end{example}
Note  that for Lie groupoids, the left and right algebroid structures differ just by sign. One of our main results is that this is also the case of smooth I.P. loopoids.  In other words the conjecture \ref{c1} is true for I.P. loopoids. Adapting (2) in \cite{MMS}, proved for Lie groupoids, we obtain the following.
\begin{lemma}\label{IP}
For any smooth I.P. loopoid $G$ with an inverse  $\zi:G\to G$\,, for any $X\in A_aG$ we have
\be\label{zi}\T_a\zi(\bar{X}^\za_a)=-\bar{X}^\zb_a\,.\ee
\end{lemma}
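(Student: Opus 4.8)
The plan is to realise both sides of (\ref{zi}) as velocities of explicit curves and reduce the identity to the derivative of the multiplication at a unit. First I would record how $\zi$ interacts with the structural maps. Since $G$ has the Inverse Property, the theorem preceding this lemma gives the morphism relations $\za(gh)=\za(g)$, $\zb(gh)=\zb(h)$, together with the identities $gg^{-1}=\za(g)$ and $g^{-1}g=\zb(g)$ stated just after it. Applying $\za$ and $\zb$ to these and using $\za|_M=\zb|_M=\Id_M$ yields $\za\circ\zi=\zb$ and $\zb\circ\zi=\za$, so $\zi$ interchanges the target and source fibres, $\zi(\cF^\za(a))=\cF^\zb(a)$. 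Moreover, for a unit $a\in M$ we have $a\cdot a=\za(a)\,a=a$ and $a\cdot a^{-1}=\za(a)=a$, both sides lying in the domain of $l_a$, so injectivity of $l_a$ forces $\zi(a)=a$. Hence $\zi$ fixes $M$ pointwise and $\T_a\zi$ carries $\T_a\cF^\za(a)$ into $\T_a\cF^\zb(a)$, which is precisely where both sides of (\ref{zi}) live.

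Next I would choose a curve $c\colon(-\ze,\ze)\to\cF^\za(a)$ with $c(0)=a$ and $\dot c(0)=\bar{X}^\za_a$, and set $d(t)=\zi(c(t))=c(t)^{-1}$. By the previous step $d$ is a curve in $\cF^\zb(a)$ with $d(0)=a$ and $\dot d(0)=\T_a\zi(\bar{X}^\za_a)$. Because $\za(c(t)^{-1})=\zb(c(t))$, the pair $(c(t),c(t)^{-1})$ is composable for every $t$, and the identity $gg^{-1}=\za(g)$ makes $m(c(t),c(t)^{-1})=\za(c(t))=a$ a constant curve. Differentiating at $t=0$ produces the key relation $\T_{(a,a)}m(\bar{X}^\za_a,\dot d(0))=0$.

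The heart of the argument, and the main obstacle, is the computation of $\T_{(a,a)}m$ at the unit: associativity is unavailable, so the formula must be extracted from the unit axioms alone. I would use $\za|_M=\zb|_M=\Id_M$ to split $\T_aG=\T_a\cF^\za(a)\oplus\T_aM=\T_a\cF^\zb(a)\oplus\T_aM$, and that $\T_{(a,a)}G^{(2)}=\{(u,v):\T_a\zb(u)=\T_a\za(v)\}$. Every such pair decomposes uniquely as $(\tilde u+w,\tilde v+w)$ with $\tilde u\in\T_a\cF^\zb(a)$, $\tilde v\in\T_a\cF^\za(a)$ and $w\in\T_aM$. Testing $m$ on the three elementary families of curves --- moving $g$ inside $\cF^\zb(a)$ with $h=a$ (where $m(g,a)=g$), moving $h$ inside $\cF^\za(a)$ with $g=a$ (where $m(a,h)=h$), and moving both along $M$ (where $e\cdot e=e$) --- and invoking linearity of $\T_{(a,a)}m$, I obtain
$$\T_{(a,a)}m(\tilde u+w,\tilde v+w)=\tilde u+\tilde v+w\,.$$

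Finally I would substitute. Put $\rho=\T_a\zb(\bar{X}^\za_a)\in\T_aM$; then $\bar{X}^\za_a=\tilde u+\rho$ with $\tilde u=\bar{X}^\za_a-\rho\in\T_a\cF^\zb(a)$, while from $\za\circ d=\zb\circ c$ we get $\T_a\za(\dot d(0))=\T_a\zb(\bar{X}^\za_a)=\rho$, so $\dot d(0)=\tilde v+\rho$ with $\tilde v=\dot d(0)-\rho\in\T_a\cF^\za(a)$. The relation $\T_{(a,a)}m(\bar{X}^\za_a,\dot d(0))=0$ then reads $\bar{X}^\za_a+\dot d(0)-\rho=0$, i.e. $\dot d(0)=\rho-\bar{X}^\za_a$. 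Since $\rho=\zr(X)=\bar{X}^\za_a-\bar{X}^\zb_a$ by the definition of the anchor established above, this collapses to $\T_a\zi(\bar{X}^\za_a)=\dot d(0)=-\bar{X}^\zb_a$, which is (\ref{zi}).
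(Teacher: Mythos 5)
Your proof is correct, and it is essentially a mirror image of the paper's argument rather than a different method: both realise the identity by differentiating an Inverse Property relation along a curve through the unit, exploiting linearity of $\T_{(a,a)}m$ on elementary directions, and closing with the anchor identification $\T_a\zb(\bar{X}^\za_a)=\bar{X}^\za_a-\bar{X}^\zb_a$. The concrete difference is the choice of identity. The paper differentiates $\zg^{-1}(t)\zg(t)=\zb(\zg(t))$, i.e.\ $g^{-1}g=\zb(g)$, whose right-hand side moves in $M$ and gives $\T_{(a,a)}m(\T_a\zi(v),v)=\T_a\zb(v)$; since there both entries of the tangent pair are fibre-tangent ($v\in\ker\T_a\za$ and $\T_a\zi(v)\in\ker\T_a\zb$ because $\zb\circ\zi=\za$), additivity on the two slices $(\T_a\zi(v),0)$ and $(0,v)$ suffices, each evaluated from the unit laws $m(g,\zb(g))=g$ and $m(\za(h),h)=h$. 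You instead differentiate $c(t)c(t)^{-1}=\za(c(t))\equiv a$, whose constancy is convenient, but the price is that $\T_a\zi(\bar{X}^\za_a)$ then carries the nontrivial $M$-component $\rho=\T_a\zb(\bar{X}^\za_a)$, which forces you to establish the full three-term splitting $\T_{(a,a)}m(\tilde u+w,\tilde v+w)=\tilde u+\tilde v+w$ on all of $\T_{(a,a)}G^{(2)}$ --- in effect the unit-point case of the general tangent-multiplication formula the paper proves later via local bisections. Your preliminary steps ($\za\circ\zi=\zb$ and $\zb\circ\zi=\za$ from composability of $gg^{-1}$ and $g^{-1}g$, and $\zi|_M=\Id$ from injectivity of $l_a$) are facts the paper uses tacitly, and spelling them out is a genuine gain in rigour. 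In short: the paper's choice of identity keeps every vector inside the kernels and makes the computation two lines; yours is slightly longer but more self-contained, since the formula you derive for $\T_{(a,a)}m$ would serve equally well for any other identity differentiated at a unit.
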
	
\begin{proof}
Let $X$ be an element of $A_aG$\,. We take a curve $\zg:\R\to\cF^\za(a)$ such that its tangent vector $v$ at $a$ represents $X$\,, i.e, $\bar{X}^\za_a=v=\dot\zg(0)$\,.
 Let us observe that
$$\sT_{(a,a)}m(0,v)=\frac{\xd}{\xd t}m(a,\zg(t)|_{t=0}=v\,;$$
likewise for the $\zb$-path $\zg^{-1}(t)$ we have
$$\sT_{(a,a)}m(\sT_a\zi(v),0)=\sT_a\zi(v)\,.$$
Therefore
$$\sT_{(a,a)}m(\sT_a\zi(v),v)=v+\sT_a\zi(v)=\sT_a\zb(v)\,,$$
since
$m(\zg^{-1}(t),\zg(t))=\zb(\zg(t))$\,.
Hence,
$$\sT_a\zi(v)=-v+\sT_a\zb(v)\,.$$
 This proves (\ref{zi}), as $\sT_a\zb(v)$ is the anchor of $v$\,.
\end{proof}
\begin{remark}
Note that the above Lemma implies that $\sT\zi$ induces a map on the normal bundle
$$\widetilde{\sT\zi}:AG\to AG\,,\quad \widetilde{\sT\zi}(X)=-X\,.$$
\end{remark}
	 \begin{theorem}
		Let $G$ be a smooth I.P. loopoid. Then, $[X,Y]_l=-[X,Y]_r$\,.
	\end{theorem}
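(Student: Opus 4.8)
The plan is to show that the inversion $\zi$ intertwines the left and right fundamental vector fields up to a sign, and then to invoke the naturality of the Lie bracket under $\zi$-related vector fields. Concretely, I first prove that for every $X\in\Sec(AG)$ the vector field $\lvec{X}$ is $\zi$-related to $-\rvec{X}$, i.e.
$$\sT_g\zi\left(\lvec{X}(g)\right)=-\rvec{X}(\zi(g))\qquad\text{for all }g\in G.$$
Granting this, since $\zi$-related vector fields have $\zi$-related brackets and the two minus signs cancel, $[\lvec{X},\lvec{Y}]$ will be $\zi$-related to $[-\rvec{X},-\rvec{Y}]=[\rvec{X},\rvec{Y}]$; evaluating at the fixed points of $\zi$ (the units $M$) and projecting to the normal bundle then yields the claim. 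Note that this scheme nowhere uses associativity of $G$, only that $\zi$ is a global diffeomorphism fixing $M$ and inducing $-\Id$ on $AG$.

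To establish the displayed relation I would argue with curves. Fix $g$ and choose $t\mapsto h(t)$ in $\cF^\za(\zb(g))$ with $h(0)=\zb(g)$ and $\dot h(0)=\bar{X}^\za(\zb(g))$, so that $t\mapsto l_g(h(t))=gh(t)$ represents $\lvec{X}(g)$ at $t=0$. Applying $\zi$ and using the I.P.\ identity $(gh)^{-1}=h^{-1}g^{-1}$ gives $\zi(gh(t))=h(t)^{-1}g^{-1}=r_{g^{-1}}\!\left(\zi(h(t))\right)$; all composability constraints hold because $\za(g^{-1})=\zb(g)$ and $\zb(g^{-1})=\za(g)$ for I.P.\ loopoids. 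The curve $t\mapsto\zi(h(t))$ passes through the unit $\zb(g)$ (as $\zi$ fixes units) and lies in $\cF^\zb(\zb(g))$, and by Lemma \ref{IP}, equation (\ref{zi}), its velocity at $t=0$ is $\sT_{\zb(g)}\zi(\bar{X}^\za(\zb(g)))=-\bar{X}^\zb(\zb(g))$. Differentiating $r_{g^{-1}}(\zi(h(t)))$ at $t=0$ and comparing with the definition $\rvec{X}(g^{-1})=\sT_{\za(g^{-1})}r_{g^{-1}}(\bar{X}^\zb(\za(g^{-1})))$, where $\za(g^{-1})=\zb(g)$, produces exactly $-\rvec{X}(g^{-1})=-\rvec{X}(\zi(g))$, which is the desired relation. (The base point is correct since $\zb(g)\,g^{-1}=\za(g^{-1})\,g^{-1}=g^{-1}$.)

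With the $\zi$-relatedness in hand I apply the standard fact that if $V_i$ is $\zi$-related to $W_i$ for $i=1,2$, then $[V_1,V_2]$ is $\zi$-related to $[W_1,W_2]$. Taking $V_i=\lvec{X},\lvec{Y}$ and $W_i=-\rvec{X},-\rvec{Y}$ gives
$$\sT_g\zi\left([\lvec{X},\lvec{Y}](g)\right)=[\rvec{X},\rvec{Y}](\zi(g)).$$
Evaluating at a unit $a\in M$, where $\zi(a)=a$, yields $\sT_a\zi([\lvec{X},\lvec{Y}](a))=[\rvec{X},\rvec{Y}](a)$. Finally I pass to classes in the normal bundle $A_aG$: since $\zi$ fixes $M$ pointwise, $\sT_a\zi$ preserves $\sT_aM$ and descends to the induced map $\widetilde{\sT\zi}=-\Id$ of the Remark following Lemma \ref{IP}. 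Projecting the last equality therefore gives $-[X,Y]_l(a)=[X,Y]_r(a)$, that is $[X,Y]_l=-[X,Y]_r$.

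The main obstacle I expect lies entirely in the first step: verifying carefully that the curve computation respects all source/target composability conditions of the partially defined multiplication, and that $\zi$ genuinely fixes the units so that the transformed curve is based at $\zb(g)\in M$ rather than at a nearby nonunit. Once the $\zi$-relatedness is pinned down, the remaining steps—naturality of the bracket and projection to $AG$—are formal.
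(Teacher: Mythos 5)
Your proof is correct and is essentially the paper's own argument: your curve computation $\zi(gh(t))=r_{g^{-1}}(\zi(h(t)))$ is exactly the paper's identity $l_g=\zi\circ r_{g^{-1}}\circ\zi$ in infinitesimal form, and both proofs then combine Lemma \ref{IP} with the naturality of the Lie bracket under the involution $\zi$ and the induced map $\widetilde{\sT\zi}=-\Id$ on $AG$ to conclude $[X,Y]_l=-[X,Y]_r$. If anything, you are slightly more explicit than the paper in verifying the composability conditions, that $\zi$ fixes the units, and the final projection to the normal bundle.
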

	\begin{proof}
		According to Lemma \ref{IP}, $\T_a\zi(\bar{X}^\za_a)=-\bar{X}^\zb_a$\,.
		As $\zi$ is the inverse, we have
		$$
		l_g=\zi\circ r_{g^{-1}}\circ \zi\,.
		$$
		Hence,
		$$\lvec{X}(g)=\T l_g(\bar{X}^\za)=\T\zi\circ\T r_{g^{-1}}\circ \T\zi(\bar{X}^\za)=\T\zi\circ\T r_{g^{-1}}(-\bar{X}^\zb)=-\T\zi(\rvec{X}(g^{-1}))=-\zi_*\rvec{X}(g)\,.
		$$
		In consequence, for $a\in M$,
		$${[X,Y]}_l(a)=[\lvec{X},\lvec{Y}](a)=[\zi_*(\rvec{X}),\zi_*(\rvec{Y})](a)=
\zi_*[\rvec{X},\rvec{Y}](a)=\sT_a\zi{[X,Y]}_r(a)=-{[X,Y]}_r(a)\,.$$

	\end{proof}
\begin{remark}
It is well known that the tangent algebra of a general analytical loop has a structure of an \emph{Akivis algebra} \cite{HS}. Unfortunately, the same Akivis algebra might correspond to non-isomorphic local analytic loops, so a finer structure is required in order to determine the analytical loop. The situation is similar in our case: a skew algebroid can be the infinitesimal part of many smooth loopoids. Whether additionally we have a ternary operation that makes our skew algebroid into an `Akivis algebroid' is not clear yet, but it is definitely an important question which is open up to now.
\end{remark}

\subsection{Functoriality}
\begin{lemma} If $B\rightrightarrows N$ is a smooth quasisubloopoid of the smooth quasiloopoid $G\rightrightarrows M$, then the natural map of normal
		bundles, induced functorially by the map of pairs $(B,N)\hookrightarrow(G,M)$, defines a morphism
		of skew algebroids $A^lB\hookrightarrow A^lG$ with left brackets. The same is true for the right brackets.
	\end{lemma}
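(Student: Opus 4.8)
The plan is to realise $A^lB$ as a skew subalgebroid of $A^lG$ and to check that the resulting restriction map on sections intertwines the two left brackets. The inclusion of pairs $(B,N)\hookrightarrow(G,M)$ induces on normal bundles a vector bundle monomorphism $\zy\colon AB\to AG$ over $N\hookrightarrow M$ (the intersection being clean, $\sT_aB\cap\sT_aM=\sT_aN$, so that $AB$ is a subbundle of $AG|_N$). By the graph definition of a skew algebroid morphism and the Proposition on skew subalgebroids, it suffices to show that $AB$ is a skew subalgebroid of $A^lG$ and that the inherited structure is the intrinsic left structure $A^lB$. First I would record the anchor condition. For $X\in A_aB$, $a\in N$, one may represent $X$ by a vector $\bar X^\za(a)\in\sT_a\cF^\za_B(a)$ tangent to the $\za$-fibre of $B$: this uses the canonical isomorphism $AB\cong\sT\cF^\za_B\big|_N$, valid because $\za|_N=\Id_N$ splits $\sT_aB=\sT_a\cF^\za_B(a)\oplus\sT_aN$. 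Since $\zb|_B\colon B\to N$, we then get $\zr(X)=\sT_a\zb(\bar X^\za)\in\sT_aN$, so $\zr(AB)\subset\sT N$ as required.

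The heart of the argument, and the step I expect to be the main obstacle, is the following tangency statement. Let $X\in\Sec(AG)$ with $X|_N\in\Sec(AB)$, and denote by $\lvec{(\cdot)}^{B}$ the left fundamental field computed intrinsically in $B$. I claim that $\lvec X$ is tangent to $B$ along $B$ and that $\lvec X\big|_B=\lvec{X|_N}^{B}$. Indeed, for $g\in B$ we have $\zb(g)\in N$, so $\lvec X(g)=\sT_{\zb(g)}l_g\bigl(\bar X^\za(\zb(g))\bigr)$ depends only on $X|_N$. Choosing the representative $\bar X^\za(\zb(g))$ tangent to $\cF^\za_B(\zb(g))$ as above, and using that $B$ is closed under the multiplication of $G$ (so that $l_g$ maps $\cF^\za_B(\zb(g))$ into $B$ and there coincides with the intrinsic left translation $l_g^B$), we obtain $\lvec X(g)=\sT l_g^B(\bar X^\za)=\lvec{X|_N}^{B}(g)\in\sT_gB$. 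Thus $\lvec X\big|_B=\lvec{X|_N}^{B}$ is tangent to $B$.

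Finally I would invoke the standard fact that the Lie bracket of two vector fields tangent to a submanifold is again tangent to it and restricts there to the bracket of the restrictions. Applying this to $\lvec X$ and $\lvec Y$ for $X,Y\in\Sec(AG)$ with $X|_N,Y|_N\in\Sec(AB)$ gives, for every $a\in N$,
\[
[X,Y]_l(a)=[\lvec X,\lvec Y](a)=[\lvec X\big|_B,\lvec Y\big|_B](a)=[\lvec{X|_N}^{B},\lvec{Y|_N}^{B}](a)\in\sT_aB,
\]
and the class of this vector modulo $\sT_aN$ is exactly $[X|_N,Y|_N]_l^{B}(a)$, which $\zy$ sends into $AG$. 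Hence $[X,Y]_l|_N\in\Sec(AB)$, so $AB$ is a skew subalgebroid of $A^lG$; by the Proposition the induced structure is the intrinsic $A^lB$, and $\zy$ preserves the left bracket, giving the desired morphism $A^lB\hookrightarrow A^lG$. The right-bracket statement follows verbatim after replacing $\za$-fibres, left translations and $\lvec{(\cdot)}$ by $\zb$-fibres, right translations and $\rvec{(\cdot)}$, recalling $\zr_r=-\zr_l$.
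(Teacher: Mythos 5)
Your proof is correct and follows essentially the same route as the paper: identify $AB$ as a subbundle of $AG$, show that a section of $AG$ restricting to $\Sec(AB)$ has left fundamental field tangent to $B$, and conclude via closure of tangent vector fields under the Lie bracket that $\Sec(AG,AB)$ is closed under $[\cdot,\cdot]_l$. The only difference is one of detail, not of method: you spell out the tangency claim (choosing the representative $\bar X^\za$ in $\T\cF^\za_B$ and using that $l_g$ restricted to the $\za$-fibre of $B$ is the intrinsic $l_g^B$, so $\lvec{X}\big|_B=\lvec{X|_N}^{B}$) and the anchor condition $\zr(AB)\subset\T N$, both of which the paper asserts as immediate.
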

	\begin{proof}  It is immediate that $AB\to N$ is a subbundle of $AG\to M$\,. Furthermore,
		$X\in\Sec(AG)$ restricts to a section of $AB$ if and only if the vector field $\lvec{X}$ is tangent to $B$\,. If
		$X,Y$ are two such sections, then $[\lvec{X},\lvec{Y}]$ again is tangent to $B$\,. That is, the space
		$\Sec(AG, AB)$ of sections $AG$ that restrict to sections of $AB$\,, is a skew subalgebra with respect to the left bracket. Similarly, for the right bracket.
		
	\end{proof}
	More generally, we have:
	\begin{theorem} If  $B\rightrightarrows N$ and $G\rightrightarrows M$  are smooth quasiloopoids, then the map on normal
		bundles, defined by quasiloopoid morphism of pairs $F : (B,N) \to (G,M)$\,, is a morphism of left and right skew algebroids
		$AF : AB\to AG$\,.
	\end{theorem}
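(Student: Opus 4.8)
The plan is to reduce the general morphism statement to the quasisubloopoid case already settled in the previous Lemma, using the graph construction, exactly as one does in the Lie groupoid/Lie algebroid setting. First I would form the \emph{product quasiloopoid} $B\ti G\rightrightarrows N\ti M$, with source and target $\za_B\ti\za_G$ and $\zb_B\ti\zb_G$, componentwise partial multiplication $(x,x')(y,y')=(xy,x'y')$, and units $N\ti M$. That $B\ti G$ is again a quasiloopoid is routine: $(B\ti G)^{(2)}=B^{(2)}\ti G^{(2)}$ is a closed embedded submanifold of $(B\ti G)\ti(B\ti G)$, and the translations $l_{(g,h)}=l_g\ti l_h$ and $r_{(g,h)}=r_g\ti r_h$ are injective immersions, being products of injective immersions.

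The first substantive step is to check that the generalized Lie functor respects products, i.e.\ that $A(B\ti G)=AB\oplus AG$ as skew algebroids, for both the left and the right bracket. On underlying bundles this is the canonical splitting $\zn(B\ti G,N\ti M)=\zn(B,N)\oplus\zn(G,M)$. The key observation is that, since $l_{(g,h)}=l_g\ti l_h$, the left prolongation on the product splits, $\lvec{(X,Y)}=(\lvec X,\lvec Y)$, whence along $N\ti M$ one has $[\lvec{(X_1,Y_1)},\lvec{(X_2,Y_2)}]=([\lvec{X_1},\lvec{X_2}],[\lvec{Y_1},\lvec{Y_2}])$, so the left bracket and anchor on $A(B\ti G)$ are the product bracket and anchor on $AB\oplus AG$; the same holds for the right structure via $r_{(g,h)}=r_g\ti r_h$. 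I expect this product-compatibility to be the main point of the argument, everything after it being formal.

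Next I would pass to the graph $\Graph(F)=\{(x,F(x)):x\in B\}$. Because $F$ intertwines the structure maps, $\za_G\circ F=\phi\circ\za_B$ and $\zb_G\circ F=\phi\circ\zb_B$ (where $\phi:N\to M$ is the induced map on units), a pair $\bigl((x,F(x)),(y,F(y))\bigr)$ is composable in $B\ti G$ if and only if $\zb_B(x)=\za_B(y)$, the condition $\zb_G(F(x))=\za_G(F(y))$ being then automatic. Multiplicativity $F(xy)=F(x)F(y)$ gives that the product $(xy,F(x)F(y))=(xy,F(xy))$ again lies in $\Graph(F)$, and the units of $\Graph(F)$ are $\Graph(\phi)\subset N\ti M$, since $F(N)\subset M$. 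Hence $\Graph(F)\rightrightarrows\Graph(\phi)$ is a smooth quasisubloopoid of $B\ti G\rightrightarrows N\ti M$.

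Finally, by the previous Lemma the inclusion $\Graph(F)\hookrightarrow B\ti G$ induces a skew subalgebroid $A(\Graph(F))\hookrightarrow A(B\ti G)=AB\oplus AG$, for both the left and the right bracket. It remains only to identify this subbundle with $\Graph(AF)$. The map $j_F:B\to B\ti G$, $j_F(x)=(x,F(x))$, is a quasiloopoid isomorphism onto $\Graph(F)$, and $\sT j_F=(\Id,\sT F)$; applying the normal-bundle functor to the inclusion of pairs therefore gives $A j_F=(\Id,AF)$, so that the embedded subbundle $A(\Graph(F))\subset AB\oplus AG$ is precisely $\Graph(AF)$. As $\Graph(AF)$ is then a skew subalgebroid of the direct product $AB\ti AG$ for both brackets, the defining condition for a skew algebroid morphism is fulfilled, and $AF:AB\to AG$ is a morphism of both the left and the right skew algebroids, as claimed.
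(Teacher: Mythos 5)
Your proposal is correct and follows exactly the paper's route: the paper's proof likewise observes that $F$ is a quasiloopoid morphism if and only if $\Graph(F)\subset B\ti G$ is a quasisubloopoid, and then invokes the preceding Lemma to get $A^l(\Graph(F))\subset A^l(B\ti G)=A^lB\ti A^lG$ as a skew subalgebroid, which by the graph definition of skew algebroid morphisms yields the claim. You merely make explicit the steps the paper leaves implicit (that $B\ti G$ is a quasiloopoid, that $A$ respects products, and the identification $A(\Graph(F))=\Graph(AF)$), all of which are carried out correctly.
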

	\begin{proof} $F$ is a morphism of quasiloopoids if and only if $\Graph(F) \subset B\ti G$ is a quasi subgroupoid.
		But then $$A^l(\Graph(F)) \subset A^l(B\ti G) = A^lB \ti A^lG$$ is a skew subalgebroid.
	\end{proof}
	In summary, we have constructed a \emph{generalized Lie functor} from the category of smooth quasiloopoids (and
	their morphisms) to pairs of skew algebroids (and their morphisms) with opposite anchors. Note however that in the nonassociative case our Lie functor is not an equivalence of categories. In general, a morphism of skew algebroids do not induce a morphism of the quasiloopoids; one skew algebroid can be the infinitesimal part of many (local) smooth quasiloopoids.
	
	\subsection{The case of loopoids}
	If we start from a loopoid $G\rightrightarrows M$\,, then
	$$
	\za(gh)=\za(g)\ \text{and}\ \zb(gh)=\zb(h)\,.
	$$
	This means that the \emph{anchor map}
	$$(\za,\zb):G\to M\ti M$$
	is a morphism of the loopoid $G$ into the Lie groupoid of pairs $M\ti M$\,. As the Lie algebroid of $M\ti M$ is $\T M$\,, we have natural morphisms of skew algebroids $$A^l(\za,\zb): A^lG\to\T M=A^l(M\ti M)\,.$$
	\begin{proposition}
		$A^l(\za,\zb)$ is the anchor of skew algebroid $A^lG\to M$.
	\end{proposition}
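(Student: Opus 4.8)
The plan is to unwind the definition of the generalized Lie functor on morphisms and match the result against the anchor formula established earlier. By the functoriality theorem, $A^l(\za,\zb)$ is nothing but the map of normal bundles induced by the loopoid morphism $(\za,\zb)\colon(G,M)\to(M\ti M,M)$; recall that this morphism covers the identity on $M$, since $\za$ and $\zb$ restrict to the identity on the units and hence send $\ze(m)$ to $(m,m)$. On the target side, the Lie algebroid $A^l(M\ti M)$ of the pair groupoid is canonically identified with $\T M$ through its own anchor, which is an isomorphism; concretely $\zr_l^{M\ti M}(\bar X^\za)=\T\zb(\bar X^\za)$ sends the class of a pair $(v,w)\in\T_mM\oplus\T_mM$ to $w-v$. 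On the source side, recall that the anchor of $A^lG$ is $\zr=\zr_l$ with $\zr_l(X)=\T\zb(\bar X^\za)$, where $\bar X^\za$ is the representative of the normal class $X$ tangent to the $\za$-fibre.

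First I would fix $m\in M$ and $X\in A_mG$, and choose precisely the $\za$-fibre representative $\bar X^\za\in\T_m\cF^\za(m)$ for the class $X$; this is legitimate because the induced map on normal bundles is computed on any representative by the tangent map $\T(\za,\zb)=(\T\za,\T\zb)$ and is independent of the choice (the tangent space to the units maps into the diagonal, as guaranteed by the functoriality theorem). Since $\bar X^\za$ is tangent to $\cF^\za(m)=\za^{-1}(m)$, we have $\T_m\za(\bar X^\za)=0$, so that
$$\T_m(\za,\zb)(\bar X^\za)=\bigl(\T_m\za(\bar X^\za),\T_m\zb(\bar X^\za)\bigr)=\bigl(0,\T_m\zb(\bar X^\za)\bigr)\in\T_mM\oplus\T_mM\,.$$
Passing to the normal bundle $A(M\ti M)=\T(M\ti M)_{|M}/\T M$ along the diagonal and applying the identification $A(M\ti M)\cong\T M$ given by the pair groupoid anchor, the class of $\bigl(0,\T_m\zb(\bar X^\za)\bigr)$ is sent to $\T_m\zb(\bar X^\za)-0=\T_m\zb(\bar X^\za)$. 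This equals $\zr_l(X)=\zr(X)$, which establishes $A^l(\za,\zb)(X)=\zr(X)$, i.e. $A^l(\za,\zb)$ is the anchor.

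The only delicate point is the bookkeeping of the two identifications, so I would single it out. One must check that the canonical isomorphism $A(M\ti M)\cong\T M$ invoked here is exactly the one coming from the pair groupoid anchor $\zr_l^{M\ti M}=\T\zb(\,\cdot\,)$ under the conventions fixed earlier, and not its negative or the one coming from $\T\za$; once this compatibility is verified the computation is self-consistent, because choosing the $\za$-fibre representative annihilates the $\T\za$-component and leaves precisely $\T\zb(\bar X^\za)=\zr(X)$. Everything else — well-definedness of the map on normal bundles and its skew-algebroid morphism property — is already supplied by the functoriality theorem, so no further estimates are needed. I expect the whole argument to reduce to this single observation, which is the loopoid analog of the familiar fact that for a Lie groupoid the Lie functor sends the anchor morphism $(\za,\zb)$ into the pair groupoid to the algebroid anchor.
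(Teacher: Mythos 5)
Your proof is correct and takes essentially the same route as the paper: the paper likewise evaluates $A^l(\za,\zb)$ on the $\za$-fibre representative $\bar X^\za$ (represented there by a curve $\zg(t)$ in the $\za$-fibre, whose image under $(\za,\zb)$ is the curve $(\zb(g),\zb(\zg(t)))$ with constant first component) and reads off the class in $A^l(M\ti M)\cong\T M$ as $\T\zb(\bar X^\za)=\zr(X)$. Your explicit check that the identification $A(M\ti M)\cong\T M$ sends the class of $(v,w)$ to $w-v$, matching the left-anchor convention $\zr_l(X)=\T\zb(\bar X^\za)$, is a sign verification the paper leaves implicit but changes nothing in substance.
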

	\begin{proof} Let $X\in A^lG$ and $\bar{X}^\za\in\T_{\zb(g)}\cF^\za(\zb(g)$ be the corresponding vertical representant
		tangent to the $\za$-fiber. Let $\bar{X}^\za$ be represented by a smooth curve $\zg:\R\to G$, $\za(\zg(t))=\zb(g)$\,.
		The curve is mapped by the anchor map $(\za,\zb)$ to the curve $(\zb(g),\zb(\zg(t)))$. It represents the tangent vector $A^l(\za,\zb)(X)$ in $\T M$ viewed as $A^l(M\ti M)$\,.
		
	\end{proof}
	\begin{theorem}
	The left and right skew algebroid structures for a smooth loopoid $G\rightrightarrows M$ are almost Lie algebroids with opposite anchors.
	\end{theorem}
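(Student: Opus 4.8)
The plan is to read the almost Lie property off functoriality of the generalized Lie functor together with the observation, already recorded above, that the anchor map $(\za,\zb)\colon G\to M\ti M$ is a morphism of loopoids into the pair groupoid. The crucial point is that $M\ti M$ is a genuine Lie groupoid, so both its left and right skew algebroids $A^l(M\ti M)$ and $A^r(M\ti M)$ are honest Lie algebroids (the second being the opposite of the first), and in particular their anchors are bracket homomorphisms. Applying the functoriality theorem to the morphism $(\za,\zb)$ produces simultaneously a morphism of left skew algebroids $A^l(\za,\zb)\colon A^lG\to A^l(M\ti M)$ and a morphism of right skew algebroids $A^r(\za,\zb)\colon A^rG\to A^r(M\ti M)$. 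Both cover $\Id_M$ on the base, since $(\za,\zb)$ carries a unit $m\in M$ to the diagonal point $(m,m)$.

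For the left structure the conclusion is then immediate. By the preceding Proposition the anchor $\zr$ of $A^lG$ coincides with $A^l(\za,\zb)$, while the target $A^l(M\ti M)=\T M$ is the tangent algebroid, whose anchor is the identity and whose bracket is the ordinary Lie bracket of vector fields. Since $A^l(\za,\zb)$ is a skew algebroid morphism over $\Id_M$, closedness of the sections of its graph under the bracket translates directly into
$$\zr([X,Y]_l)=[\zr(X),\zr(Y)]\,,\qquad X,Y\in\Sec(A^lG)\,,$$
the right-hand bracket being that of vector fields. This is precisely the almost Lie condition for $A^lG$.

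The right structure must be handled separately, and here lies the only genuine subtlety: one is tempted to deduce the statement from the sign relation $[X,Y]_l=-[X,Y]_r$, but that identity is merely Conjecture \ref{c1} and is \emph{not} available for a general loopoid. Instead I would run the same argument independently for the right skew algebroid. Writing $\psi=A^r(\za,\zb)$, letting $\zr_r$ denote the anchor of $A^rG$ and $\zr_{M\ti M}$ the anchor of the Lie algebroid $A^r(M\ti M)$, the symmetric form of the Proposition gives $\zr_r=\zr_{M\ti M}\circ\psi$, the morphism property of $\psi$ over $\Id_M$ gives $\psi([X,Y]_r)=[\psi(X),\psi(Y)]_{M\ti M}$, and the almost Lie property of the target gives $\zr_{M\ti M}([U,V]_{M\ti M})=[\zr_{M\ti M}(U),\zr_{M\ti M}(V)]$. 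Chaining these three facts yields
$$\zr_r([X,Y]_r)=\zr_{M\ti M}\big([\psi(X),\psi(Y)]_{M\ti M}\big)=[\zr_r(X),\zr_r(Y)]\,,$$
so $A^rG$ is almost Lie as well. That the two anchors are opposite, $\zr_r=-\zr_l$, has already been established, which completes the statement.

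The step I expect to require the most care is verifying that the functoriality theorem really delivers $A(\za,\zb)$ as a morphism for \emph{both} the left and the right brackets in the exact sense of the definition (graph a skew subalgebroid of the product), and that $A^r(M\ti M)$ is genuinely a Lie algebroid so that its anchor may be used as a bracket homomorphism; once these are in place, the remainder is the formal chase through anchor and bracket compatibility displayed above.
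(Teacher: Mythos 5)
Your proposal is correct and takes essentially the same route as the paper: the paper's entire proof consists of observing that $\zr=A^l(\za,\zb)$ is a morphism of skew algebroids over $\Id_M$ into $\T M=A^l(M\ti M)$, from which $\zr[X,Y]_l=[\zr(X),\zr(Y)]$ follows, exactly as in your left-hand argument. Your explicit independent treatment of the right bracket—correctly refusing to invoke the unproven Conjecture \ref{c1} and instead rerunning the morphism argument through $A^r(\za,\zb)$ and the Lie algebroid $A^r(M\ti M)$—merely spells out what the paper leaves implicit, since the paper displays only the left case.
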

	\begin{proof} The anchor map $\zr=A^l(\za,\zb)$ is a morphism of skew algebroids covering the identity on $M$, therefore
		\be\label{almost1}\zr[X,X']_l=[\zr(X),\zr(X')]_l\,.\ee
	\end{proof}
	\begin{example}
Let $G=G'\ti N\ti N\rightrightarrows M$ be the smooth loopoid in the Example \ref{exa}. The submanifold of units is diffeomorphic with $N$ and the normal bundle $AG=\zn(G,M)$ can be identified with $AG'\ti \sT N$. If $X=(Y_0,Y)\in \Sec(AG)$, where $Y_0\in AG'$ and $Y$ is a vector field on $N$, then
$$\lvec{X}(g,x,y)=(\lvec{Y_0},0,Y)\quad\text{and}\quad \rvec{X}(g,x,y)=(\rvec{Y_0},-Y,0)\,,$$
so that $[X,X']_l=([Y_0,Y'_0]_l,[Y,Y'])$
The anchor map $\zr:AG\to \sT M$ reads $\zr(Y_0,Y)=Y$ and (\ref{almost1}) is satisfied.
\end{example}	
	
		\begin{example}
	Let $G\rightrightarrows M$ be a quasiloopoid (or loopoid) and $AG$ the corresponding skew algebroid. The prolongation of $AG$ over a fibration $\pi:P\to M$ which is defined by
\[
		\mathcal P^{\pi}(AG)=\{(X_m,Y_p)\in A_mG\times \sT_pP|\quad \zr(X_m)=\sT_p\pi(Y_p)\} \quad p\in P\,, \quad m=\pi(p)\,,
		\]
		is an almost Lie algebroid on $P$\,.
		
		Now, we denote by $A (\mathcal P^{\pi} G)$ the almost Lie algebroid of the loopoid $ \mathcal P^{\pi} G\rightrightarrows P$  defined in Remark \ref{prolong}. Then for $p\in P$ and $m=\pi(p)$\,, it follows that
 \[
A_p(\mathcal P^{\pi} G)=\{(0_p,X_m,Y_p)\in \sT_pP\times A_mG\times \sT_pP|\quad (\sT_p\pi)(Y_p)=(\sT_m\zb)(X_m)\}\,.
\]
One can see that there is a linear isomorphism
 $$A_p(\mathcal P^{\pi}G)\to \mathcal P_p^{\pi}(AG),\quad  (0_p,X_m,Y_p)\to (X_m,Y_p)$$
which induces an isomorphism between the skew algebroids $A(\mathcal P^{\pi}G)$ and $\mathcal P^{\pi}(AG))$\,. For more details see \cite{HiMa}.
		\end{example}

	%%%%%%%%%%%%%%%%%%%%%%%%%%%%%%%%%%%%%%%%%%%%%%%%%%%%%%%
\section{Tangent and cotangent bundles of smooth loopoids}
	\subsection{ Tangent bundle of a quasiloopoid}
Like in the case of a Lie groupoid, the tangent bundle of a smooth quasiloopoid is a smooth quasiloopoid.
 The following is obvious.
\begin{lemma}\label{6.1}
If $\T\za(X_g)=\T\zb(Y_h)$\,, then $X_g$ and $Y_h$ can be represented by smooth curves $\zg_X, \zg_Y$ in $G$\,, $\zg_X(0)=g$\,, $\zg_Y(0)=h$\,, such that
$\za(\zg_X(t))=\zb(\zg_Y(t))$\,. In this case $\sT m(X_g,Y_h)$ is represented by $m(\zg_X(t),\zg_Y(t))=\zg_X(t)\cdot\zg_Y(t)$\,.
\end{lemma}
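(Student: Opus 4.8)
The plan is to read the hypothesis not as an isolated condition on two tangent vectors but as the assertion that the pair $(X_g,Y_h)$ is a genuine tangent vector to the \emph{domain} $G^{(2)}$ of the multiplication $m$, and then to obtain $\sT m(X_g,Y_h)$ as the velocity of the image under $m$ of a curve representing it. First I would use that, since $\za$ and $\zb$ are surjective submersions, $G^{(2)}$ is the fibre product of $G$ with itself over $M$ (along $\zb$ and $\za$), which by the quasiloopoid axioms is a closed embedded submanifold of $G\ti G$; consequently its tangent space at a composable pair $(g,h)$ is the fibre product of the tangent spaces $\T_gG$ and $\T_hG$ over $\T M$ via $\T\za$ and $\T\zb$. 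In other words, the tangent-level compatibility condition in the statement says exactly that $(X_g,Y_h)\in\T_{(g,h)}G^{(2)}$.

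Next I would produce the curves. Because $G^{(2)}$ is an embedded submanifold and $(X_g,Y_h)$ is tangent to it at $(g,h)$, there is a smooth curve $c:(-\zve,\zve)\to G^{(2)}$ with $c(0)=(g,h)$ and $\dot c(0)=(X_g,Y_h)$. Writing $c(t)=(\zg_X(t),\zg_Y(t))$ yields smooth curves in $G$ with $\zg_X(0)=g$, $\zg_Y(0)=h$, $\dot\zg_X(0)=X_g$, $\dot\zg_Y(0)=Y_h$, and, crucially, the source–target constraint defining $G^{(2)}$ now holds identically in $t$: each pair $(\zg_X(t),\zg_Y(t))$ is composable, so the product $\zg_X(t)\cdot\zg_Y(t)=m(\zg_X(t),\zg_Y(t))$ is defined for all small $t$. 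This is the only step with any content, and it is precisely where embeddedness of $G^{(2)}$ is used: it upgrades ``composable at $t=0$'' to ``composable for all $t$'' by letting the representing curve stay inside $G^{(2)}$.

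Finally I would invoke smoothness of $m$ and the chain rule. Since $m:G^{(2)}\to G$ is smooth, $t\mapsto m(c(t))$ is a smooth curve in $G$ through $gh$, and
$$\frac{\xd}{\xd t}\Big|_{t=0}\,m(\zg_X(t),\zg_Y(t))=\sT_{(g,h)}m\bigl(\dot c(0)\bigr)=\sT m(X_g,Y_h)\,,$$
so $\sT m(X_g,Y_h)$ is represented by $t\mapsto\zg_X(t)\cdot\zg_Y(t)$. I do not expect a serious obstacle: once the hypothesis is identified with tangency to $G^{(2)}$ and a representing curve lying in $G^{(2)}$ is chosen, the formula for $\sT m$ is just the definition of the differential evaluated on a curve, which is why the lemma can fairly be called obvious.
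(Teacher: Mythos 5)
Your proposal is correct and is exactly the argument the paper leaves implicit --- the lemma appears there without proof, prefaced only by ``The following is obvious'' --- namely: identify the hypothesis with tangency to $G^{(2)}$, which is an embedded submanifold with $\T_{(g,h)}G^{(2)}$ equal to the fibre product of $\T_gG$ and $\T_hG$ over $\T M$ (this uses that $\za,\zb$ are submersions, plus the embeddedness built into the definition of a quasiloopoid), choose a representing curve lying in $G^{(2)}$, and apply the chain rule to the smooth map $m$. The only point worth flagging is notational: since $G^{(2)}=\{(g,h)\,|\,\zb(g)=\za(h)\}$, the composability constraint along the curve is $\zb(\zg_X(t))=\za(\zg_Y(t))$ and the tangent-level hypothesis should read $\T\zb(X_g)=\T\za(Y_h)$, so the statement as printed (and your faithful paraphrase of it) has the roles of $\za$ and $\zb$ swapped relative to the definition of $G^{(2)}$ --- a typo in the paper that affects neither your argument nor the conclusion.
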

\noindent In the following theorem we adapt the proof of Theorem A.1 in \cite{MMS}.
	\begin{theorem}
		Let $G\rightrightarrows M$ be a quasiloopoid with the multiplication $m:G^{(2)}\to G$\,. For the composable pairs $(g,h)\in G^{(2)}$ denote $\zb(g)=\za(h)=q$\,. If $\tau$ and $\sigma$ are local $\za$-section and $\zb$-section defined in a neighborhood of the unit element $q\in M$ such that $g=\sigma(q)$ and $h=\tau(q)$\,, then the tangent multiplication $\T _{(g,h)}m:\T _{(g,h)}G^{(2)} \to \T _{(g,h)} G$ is given by
		\[
		\T _{(g,h)}m(v_{g},v_{h})=\T_g r_{\tau}(v_g)+\T_h l_{\sigma}(v_h)-\T_q (l_{\sigma}\circ r_{\tau})(v_q)\;,
		\]
		where $v_q=\T _g\zb(v_g)=\T _h \za (v_h)\,$. Obviously, for loopoids over a point (loops) the multiplication will be
		\begin{equation}\label{mT}
		\T _{(g,h)}m(v_{g},v_{h})=\T_g r_{h}(v_g)+\T_h l_{g}(v_h)\,.
		\end{equation}
	\end{theorem}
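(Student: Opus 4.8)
The plan is to exploit the $\R$-linearity of the tangent map $\T_{(g,h)}m$ on the vector space $\T_{(g,h)}G^{(2)}$, together with a splitting of $(v_g,v_h)$ into three "elementary" tangent vectors that $\T m$ sends exactly to the three terms on the right-hand side. Concretely, I would introduce
$$(v_g,\T_q\tau(v_q)),\qquad (\T_q\sigma(v_q),v_h),\qquad (\T_q\sigma(v_q),\T_q\tau(v_q)),$$
and first check that each lies in $\T_{(g,h)}G^{(2)}$. Since $\tau$ is an $\za$-section and $\sigma$ a $\zb$-section, we have $\za\circ\tau=\Id$ and $\zb\circ\sigma=\Id$ near $q$, whence $\T\za(\T_q\tau(v_q))=v_q=\T\zb(v_g)$ and $\T\zb(\T_q\sigma(v_q))=v_q=\T\za(v_h)$; these are precisely the incidence conditions $\T\zb(\text{first})=\T\za(\text{second})$ cutting out $\T G^{(2)}$. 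A one-line computation then shows that the combination (first $+$ second $-$ third) equals $(v_g,v_h)$, because the $\T_q\sigma(v_q)$-entries and the $\T_q\tau(v_q)$-entries cancel.

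Next I would identify $\T m$ of each elementary vector with the corresponding translation tangent, representing vectors by curves as in Lemma~\ref{6.1}. For a curve $x(t)$ with $\dot x(0)=v_g$, the definition $r_\tau(x)=x\,\tau(\zb(x))$ gives $r_\tau(x(t))=m\big(x(t),\tau(\zb(x(t)))\big)$, and since $\zb(x(t))$ has velocity $v_q$ the second factor is the curve $\tau(q(t))$ with velocity $\T_q\tau(v_q)$; hence $\T_g r_\tau(v_g)=\T_{(g,h)}m(v_g,\T_q\tau(v_q))$. Symmetrically, from $l_\sigma(y)=\sigma(\za(y))\,y$ one gets $\T_h l_\sigma(v_h)=\T_{(g,h)}m(\T_q\sigma(v_q),v_h)$. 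For the diagonal correction it suffices to differentiate $l_\sigma\circ r_\tau$ along $M$, since $v_q\in\T_q M$: for a unit $m'\in M$ near $q$ one has $\zb(m')=m'$ and $\za(\tau(m'))=m'$, so the unit axiom $\za(h')h'=h'$ yields $r_\tau(m')=m'\,\tau(m')=\tau(m')$, and then $l_\sigma(\tau(m'))=\sigma(m')\,\tau(m')=m(\sigma(m'),\tau(m'))$. This gives $\T_q(l_\sigma\circ r_\tau)(v_q)=\T_{(g,h)}m(\T_q\sigma(v_q),\T_q\tau(v_q))$.

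With these three identifications, linearity of $\T_{(g,h)}m$ applied to the decomposition from the first paragraph yields
$$\T_{(g,h)}m(v_g,v_h)=\T_g r_\tau(v_g)+\T_h l_\sigma(v_h)-\T_q(l_\sigma\circ r_\tau)(v_q),$$
and the loop case (\ref{mT}) is the specialization $M=\{e\}$, $v_q=0$: the sections are constant ($\sigma\equiv g$, $\tau\equiv h$, so $r_\tau=r_h$ and $l_\sigma=l_g$) and the diagonal term vanishes. The main obstacle I anticipate is not depth but bookkeeping: one must ensure that $r_\tau$ and $l_\sigma$ are genuinely defined on the relevant neighbourhoods of $g$, $h$ and of $q$ in $M$, that the three elementary vectors really are tangent to the embedded submanifold $G^{(2)}$ so that $\T m$ may be evaluated on them, and that the unit axioms are invoked correctly to collapse $r_\tau$ and $l_\sigma\circ r_\tau$ on $M$. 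Getting the fibres and domains straight is the only place where the argument could go astray.
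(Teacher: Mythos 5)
Your proposal is correct and follows essentially the same route as the paper: the identical three-term decomposition $(v_g,v_h)=(v_g,\T_q\tau(v_q))+(\T_q\sigma(v_q),v_h)-(\T_q\sigma(v_q),\T_q\tau(v_q))$, the same curve-based identification of each term with $\T_g r_\tau(v_g)$, $\T_h l_\sigma(v_h)$ and $\T_q(l_\sigma\circ r_\tau)(v_q)$, and the same use of the unit axiom to collapse $r_\tau$ on $M$ via $(l_\sigma\circ r_\tau)(q(t))=m(\sigma(q(t)),\tau(q(t)))$. Your explicit verification that the three elementary vectors satisfy the incidence condition cutting out $\T G^{(2)}$ is a point the paper leaves implicit, but it does not change the argument.
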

	\begin{proof}
		Consider the tangent vector $(v_g,v_h)\in \T _{(g,h)}G^{(2)}$ as the following decomposition
		\[
		(v_g,v_h)=(v_g,\T_q\tau(v_q))+(\T_q\sigma(v_q),v_h)-(\T_q\sigma(v_q),\T_q \tau(v_q))\,,
		\]
		and then apply the tangent map $\sT m_{(g,h)}$ to each term. For the first two terms the situation is rather obvious:
$$\sT m_{(g,h)}(v_g,\T_q\tau(v_q))=\T_g r_{\tau}(v_g)\quad\text{and}\quad \sT m_{(g,h)}(\T_q\sigma(v_q),v_h)=\T_h l_{\sigma}(v_h)\,.$$
 The third term requires more attention.
We will show that
		\begin{equation}\label{3}
		\sT_{(g,h)} m(\T_q\sigma(v_q),\T_q \tau(v_q))=\T_q(l_\sigma\circ r_\tau)(v_q)\,.
		\end{equation}
		If we take a curve $q(t)$ such that $v_q$ is tangent to it, then $\sigma$ and $\tau$ maps this curve to curves $g(t)$ and $h(t)$ for which $v_g$ and $v_h$ are tangent to them, respectively. So
	 \[
		\sT_{(g,h)} m(\T_q\sigma(v_q),\T_q \tau(v_q))=\frac{\xd}{\xd t}\big|_{t=0}m(g(t),h(t))=\frac{\xd}{\xd t}\big|_{t=0} m(\sigma(q(t)),\tau(q(t)))\,.
		\]
		But
		\[
		\begin{array}{rcl}
		(l_\sigma\circ r_\tau)(q(t))=l_\sigma(r_\tau(q(t)))&=&l_\sigma(q(t))\tau(\zb(q(t)))
		=l_\sigma(\tau(q(t)))\\[4pt]
		&=&\sigma(\za(\tau(q(t))))\tau(q(t))=m(\sigma(q(t)),\tau(q(t)))\,,
		\end{array}
		\]
		which proves (\ref{3}).
		
	\end{proof} 	
	Therefore, the lack of associativity is not an obstacle neither for definition of local bisections nor for the tangent multiplication in tangent bundle of quasiloopoids.
\begin{theorem} The tangent bundle $\sT G$ of a loopoid $G \rightrightarrows M$ is again a loopoid, with $\T M$ as the set of units and $\T\alpha,\T\beta : \T G\to \T M$ as the target and source maps. The multiplication map is $$\T m:\T G^{(2)}=(\T G)^{(2)}\ni(X_g, Y_h)\mapsto X_g\bullet Y_h \in \T_{gh} G\,.$$ If $G$ is an I.P. loopoid, then $\sT G$ is an I.P. loopoid.
  \end{theorem}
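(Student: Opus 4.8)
The plan is to transport the loopoid structure along the tangent functor $\sT$, using the tangent-multiplication formula of the previous theorem together with Lemma~\ref{6.1} to control the multiplication. Since $\za,\zb$ are surjective submersions, the tangent maps $\sT\za,\sT\zb:\sT G\to\sT M$ are again surjective submersions, and the unit submanifold is $\sT M\subset\sT G$, embedded by $\sT\ze$. Because $G^{(2)}$ is the transverse preimage $\{(g,h):\zb(g)=\za(h)\}$ cut out by the submersions $\za,\zb$, the tangent functor yields $\sT(G^{(2)})=\{(X_g,Y_h):\sT\zb(X_g)=\sT\za(Y_h)\}=(\sT G)^{(2)}$, a closed embedded submanifold of $\sT G\ti\sT G$, and $\sT m$ is a smooth map $(\sT G)^{(2)}\to\sT G$ sending $(X_g,Y_h)$ into $\sT_{gh}G$. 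Lemma~\ref{6.1} moreover identifies $X_g\bullet Y_h=\sT m(X_g,Y_h)$ with the tangent lift of the product of representing composable curves.

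Next I would check the base axioms for $\sT G$. The unit laws follow from Lemma~\ref{6.1}: if $X_g$ is represented by a curve $\zg_X$ with $\zg_X(0)=g$, then $\sT\zb(X_g)\in\sT M$ is represented by the curve $\zb(\zg_X(t))$ lying in $M$, the two curves are composable, and $\zg_X(t)\,\zb(\zg_X(t))=\zg_X(t)$ by the unit law $g\zb(g)=g$ of $G$; differentiating gives $X_g\bullet\sT\zb(X_g)=X_g$, and symmetrically $\sT\za(Y_h)\bullet Y_h=Y_h$. For the regularity of the translations I would fix $X_g$ and invoke the tangent-multiplication formula $\sT_{(g,h)}m(X_g,Y_h)=\sT_g r_\tau(X_g)+\sT_h l_\sigma(Y_h)-\sT_q(l_\sigma\circ r_\tau)(v_q)$, in which $v_q=\sT\zb(X_g)$ is constant along the fibre $\cF^{\sT\za}(\sT\zb(X_g))$. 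The left translation $L_{X_g}$ of $\sT G$ covers the injective immersion $l_g$ of $G$ on $\za$-fibres, and its fibrewise behaviour is governed by the tangent of the bisection translation $l_\sigma$; since $l_\sigma$ is a local diffeomorphism of $G$, one concludes that $L_{X_g}$ is an injective immersion, and the right translations are treated the same way through $\sT r_\tau$.

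To upgrade $\sT G$ to a loopoid I would verify unities associativity by functoriality: applying $\sT$ to the identities $\za\circ m=\za\circ\pr_1$ and $\zb\circ m=\zb\circ\pr_2$, valid on $G^{(2)}$ because $G$ is a loopoid, yields $\sT\za(X_g\bullet Y_h)=\sT\za(X_g)$ and $\sT\zb(X_g\bullet Y_h)=\sT\zb(Y_h)$, so that $(\sT\za,\sT\zb)$ is a morphism of $\sT G$ into the pair groupoid $\sT M\ti\sT M=\sT(M\ti M)$; by the Proposition characterising unities associativity this is equivalent to the required condition. Combining this with the fact that for a loopoid $l_g$ and $r_h$ are diffeomorphisms of the $\za$- and $\zb$-fibres of $G$---so that $L_{X_g}$ and the right translation become diffeomorphisms of the corresponding $\sT\za$- and $\sT\zb$-fibres---shows that $\sT G$ is a loopoid.

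For the I.P.\ case I would take the inversion of $\sT G$ to be $\sT\zi:\sT G\to\sT G$, a diffeomorphism because $\zi$ is. Applying $\sT$ to the Inverse Property identities $\zi(g)(gh)=h$ and $(hg)\zi(g)=h$ of $G$ gives $\sT\zi(X_g)\bullet(X_g\bullet Y_h)=Y_h$ and $(Y_h\bullet X_g)\bullet\sT\zi(X_g)=Y_h$, so $\sT G$ has the Inverse Property; by the earlier theorem asserting that a quasiloopoid with the Inverse Property is a loopoid, $\sT G$ is then an I.P.\ loopoid. The step I expect to be the main obstacle is exactly the regularity of the translations of $\sT G$: in contrast with Lie groupoids, where it is read off from the inversion, here there is a priori no inversion, and one must extract the injective-immersion (resp.\ diffeomorphism) property of $L_{X_g}$ from the explicit tangent-multiplication formula and the bisection translation $l_\sigma$---this is precisely what makes the theorem genuinely nontrivial.
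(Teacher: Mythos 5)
Your scaffolding matches the paper's proof: the identification $\sT(G^{(2)})=(\sT G)^{(2)}$ by transversality, the unit laws checked on representing curves as in Lemma~\ref{6.1}, unities associativity for $\sT G$ obtained by applying the tangent functor to $\za\circ m=\za\circ\pr_1$ and $\zb\circ m=\zb\circ\pr_2$, and $\sT\zi$ as the inverse in the I.P.\ case are all exactly the paper's steps (and your I.P.\ branch is genuinely self-contained, since an inverse supplies smooth two-sided inverses for the translations, making them diffeomorphisms of the $\sT\za$- and $\sT\zb$-fibers without further work). The problem is that for a \emph{general} loopoid --- the actual content of the theorem, as you yourself note --- your treatment of the regularity of the translations is an assertion, not a proof. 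You write that the fibrewise behaviour of $l_{(X_g)}$ ``is governed by the tangent of the bisection translation $l_\sigma$'' and that ``one concludes'' it is an injective immersion; nothing in the proposal performs this step.

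There are two concrete obstructions to your sketch. First, the tangent-multiplication formula computes the map $l_{(X_g)}$ itself, i.e.\ first-order data $\sT m(X_g,Y_h)\in\sT G$, whereas immersivity of $l_{(X_g)}:\cF^{\sT\za}(\sT\zb(X_g))\to\cF^{\sT\za}(\sT\za(X_g))$ concerns its differential, an object in $\sT\sT G$; you never differentiate, and in the formula the $\za$-section $\tau$ varies with the base point $h$, so the terms $\sT_g r_\tau(X_g)$ and $\sT_q(l_\sigma\circ r_\tau)(v_q)$ are $h$-dependent and cannot simply be discarded as constants. Second, even the claim that $l_\sigma$ is a local diffeomorphism is itself something to prove from the loopoid axioms (it fails for quasiloopoids), and it does not by itself control the \emph{vertical} directions of the fiber $\cF^{\sT\za}(\sT\zb(X_g))$, i.e.\ variations of $Y_h$ with $h$ fixed. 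The paper resolves precisely this point by a direct second-order argument: a kernel vector $\dot Y$ of $\sT l_{(X_g)}$ is represented by a curve $Y_{h(t)}(t)$; projecting by $\sT\sT G\to\sT G$ gives $\sT_h l_g(\dot h)=0$, hence $\dot h=0$ since $l_g$ is a diffeomorphism, reducing $\dot Y$ to a vertical curve $Y_h+t\bar Y_h$; then fiberwise linearity of $\sT m$ yields $X_g\bullet(Y_h+t\bar Y_h)=X_g\bullet Y_h+t\,(0_g\bullet\bar Y_h)$ together with the key identity $0_g\bullet\bar Y_h=\sT_h l_g(\bar Y_h)$, forcing $\bar Y_h=0$. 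The same base-plus-vertical decomposition then gives injectivity ($X_g\bullet Y_h=X_g\bullet Y'_{h'}$ forces $h=h'$ and then $\bar Y_h=0$) and surjectivity (vertical translates $X_g\bullet Y_h+\sT_h l_g(\bar Y_h)$ sweep the target fiber). Without an argument of this kind --- or a rigorous completion of your affine-over-$l_g$ reduction, including the missing justification that $l_\sigma$ is a local diffeomorphism --- the loopoid property of $\sT G$ is not established outside the I.P.\ case.
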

  \begin{proof}
Indeed, $\sT\za$ and $\sT\zb$ are surjective submersions. Moreover $\sT M\subset\sT G$ is the submanifold of units for the multiplication $\sT m\,$. Indeed,
if $X_a\in\sT_aM$ and $Y_g\in \cF^\za(\za(g))$ satisfying $\sT\za(Y_g)=X_a$, are represented by curves $a(t)\in M$ and $g(t)\in\cF^\za(\za(g)$ such that $a(t)=\za(g(t))$, then
 $$X_a\bullet Y_g=\sT m(X_a,Y_g)=\frac{\xd}{\xd t}\big|_{t=0}(a(t)\cdot g(t))=\frac{\xd}{\xd t}\big|_{t=0}(g(t))=Y_g\,.$$
Representing $X_g$ and $Y_h$, like in Lemma \ref{6.1}, by curves $\zg_X(t)$ and $\zg_Y(t)$\,, we have
$$\za(\zg_X(t)\cdot\zg_Y(t))=\za(\zg_X(t))\,.$$ After differentiating we get that
$$\sT\za(X_g\bullet Y_h)=\sT\za(X_g)\,.$$
Similarly
$$\sT\zb(X_g\bullet Y_h)=\sT\zb(Y_h)\,.$$
It remains to show that
\be\label{e0}l_{(X_g)}:\cF^{\sT\za}(\sT\zb(X_g))\to\cF^{\sT\za}(\sT\za(X_g))\ee
is a diffeomorphism (and similarly for $r_{(Y_h)}$)\,.

 First we show that $l_{(X_g)}$ is regular (local diffeomorphism), i.e, the differential
$$\sT_{(Y_h)}l_{(X_g)}:\sT_{(Y_h)}\left(\cF^{\sT\za}(\sT\zb(X_g))\right)\to\sT_{(X_g\bullet Y_h)}\left(\cF^{\sT\za}(\sT\za(X_g))\right)$$
has trivial kernel. Take a $\dot Y\in \sT_{(Y_h)}\left(\cF^{\sT\za}(\sT\zb(X_g))\right)$ and suppose
\be\label{e10}
\sT_{(Y_h)}l_{(X_g)}(\dot Y)=0_{(X_g\bullet Y_h)}\,.
\ee
We will show that $\dot Y=0_{(Y_h)}$\,. We can represent $\dot Y$ with a curve $Y_{h(t)}(t)\in \cF^{\sT\za}(\sT\zb(X_g))$\,, $h(0)=h$\,, $Y_h(0)=Y_h$\,.
Under the canonical projection $\sT\sT G\to\sT G$ the vector $\sT_{(Y_h)}l_{(X_g)}(\dot Y)$ goes to vector $\sT_h(l_g)(\dot h)$ represented by the curve $gh(t)$\,. In view of (\ref{e10}), $\sT_h(l_g)(\dot h)=0_{gh}$\,, so $\dot h=0_h$ because $l_g$ is a diffeomorphism. This means that we can reduce the curve $Y_{h(t)}(t)$ to the vertical curve $Y_h(t)\in\sT_h(\cF^\za(\zb(g))$\,, $Y_h(t)=Y_h+t\bar Y_h$\,, representing $\dot Y$. Of course $\sT\za(\bar Y_h)=0_{\za(h)}$\,. We have then that the curve
$$X_g\bullet Y_h(t)=\sT m(X_g,Y_h+t\bar Y_h)=\sT m(X_g+0_g,Y_h+t\bar Y_h)=X_g\bullet Y_h+0_g\bullet(t\bar Y_h)\,,$$
so $0_g\bullet(t\bar Y_h)$\,,
represents the vector $0_{gh}$\,. $\sT\za(\bar Y_h)=0$\,. Note that the vector $0_g\bullet\bar Y_h$ is represented by $g\zg(t)$, where $\zg(t)$ represents $\bar Y_h$\,, so $0_g\bullet\bar Y_h=\sT_h(l_g)(\bar Y_h)$ and
$$0_g\bullet(t\bar Y_h)=t\sT_h(l_g)(\bar Y_h)$$
represents the 0-vector, so $\bar Y_h=0$\,, as $l_g$ is a diffeomorphism. Hence, $\dot Y=0_{(Y_h)}$ and the map (\ref{e0}) is a local diffeomorphism.

 It remains to check global properties of $l_{(X_g)}$\,. First, we will show that it is injective. Indeed, suppose $X_g\bullet Y_h=X_g\bullet Y'_{h'}$\,. It is easy to see that in this case $h=h'$\,, so $Y'_h=Y_h+\bar Y_h$ and $\sT\za(\bar Y_h)=0_{\za(h)}$\,. As before, this means that $0_g\bullet \bar Y_h=\sT_h(l_g)(\bar Y_h)=0_{gh}$\,, so $\bar Y_h=0_h$\,.
Surjectivity of (\ref{e0}) is also clear. If we fix $Y_h$, then  $$X_g\bullet(Y_h+\bar Y_h)=X_g\bullet Y_h+0_g\bullet \bar Y_h\,,$$ where $\sT\za(\bar Y_h)=0_{\za(h)}$\,, and $0_g\bullet \bar Y_h=\sT_h(l_g)(\bar Y_h)$ can be arbitrary.

Finally, it is easy to see that if $\zi:G\to G$ is an inverse for $(G\rightrightarrows M,m)$, then $\sT\zi:\sT G\to \sT G$ is the inverse for $(\sT G\rightrightarrows\sT M,\sT m)$\,.

\end{proof}
\begin{remark} In the case of smooth loops an analogous result can be found in \cite{FN}. The structure on $\sT G$ is essentially the same as the one we propose here, however, the authors describe there the loop structure on $\sT G$ by means of the canonical trivialization $\sT G=G\ti\sT_eG$ rather than by means of the general differential geometric tangent functor.
\end{remark}

\subsection{ Cotangent bundle of a quasiloopoid}
 	The structure of $\T ^* G$ is much more complicated. It is well known that for any Lie group $G$, the cotangent bundle $\T ^*G$ has a natural structure of a Lie groupoid having the dual Lie algebra $\mathfrak g^*$ of $G$ as the unit submanifold. To go deeper in our understanding of structures of cotangent bundle of quasiloopoids, let us consider a quasiloopoid over a point which is actually a loop. Let $<G,\cdot , e>$ be a loop with $\mathfrak g$ as its corresponding skew algebra and $\mathfrak g^*$ as its dual skew algebra. In this case we can take the target and source maps $\za:=r^*_h$ and $\zb:=l^*_g$ to be pullbacks of smooth left and right translations, defined by
	\[
	(l^*_g\zx)(X)=\zx(\T_el_gX)\,,\quad (r^*_h\zx)(X)=\zx(\T_er_gX)\,,\quad X\in \mathfrak g, \zx\in \T _g^* G\,.
	\]
	
	Let $\mu_g\in \T^*_gG$ and $\nu_h\in \T^*_hG$ be composable pairs, that is $ \beta(\mu_g)= \alpha (\nu_h)=\sigma\in \mathfrak g^*$ or
	\begin{equation}\label{lr2}
	l^*_g(\mu_g)= r^*_h (\nu_h)=\sigma\,.
	\end{equation}
	The possible multiplication $\zm_g\bullet\zn_h\in\T^*_{gh}G$
	is defined by the equation
	\be\label{e4} \langle\zm_g,X_g\rangle+\langle\zn_h,Y_h\rangle=\langle\zm_g\bullet\zn_h, X_g\bullet Y_h\rangle\,,\quad  \mu_g\in \T ^*_gG\,,\quad\nu_h\in \T ^*_hG\,,
	\ee
	where $ X_g\bullet Y_h$ is the multiplication in the tangent loop (\ref{mT}).
	Denote $\zvy_{gh}=\zm_g\bullet\zn_h$\,, then
	$$
	\langle\zm_g,X_g\rangle+\langle\zn_h,Y_h\rangle=\langle\zvy_{gh},\T_gr_hX_g+\T_hl_gY_h\rangle=\langle r^*_h\zvy_{gh},X_g\rangle+\langle l^*_g\zvy_{gh},Y_h\rangle\,,
	$$
	implies $\zm_g=r^*_h\zvy_{gh}$ and $\zn_h= l^*_g\zvy_{gh}$ and then from (\ref{lr2}), we get
	\[
	l^*_g(r^*_h\zvy_{gh})= r^*_h (l^*_g\zvy_{gh})=\sigma\,,
	\]
	This is a sort of associativity which we do not have in our case.
	
	One may see that this problem comes from the fact that the equation (\ref{e4}) defines properly $\zm_g\bullet\zn_h$ if and only if the left-hand side vanishes for $(X_g,Y_h)$ in the kernel of multiplication map $\T m:\sT G^{(2)}\to G$ defined by (\ref{mT}). But $(X_g,Y_h)\in \ker(\sT m)$ is equivalent to
$$X_g=-(\T_g r_h)^{-1}(\T_h l_gY_h)\,.$$ If we insert the latter into (\ref{e4}), we get $l^*_g(r^*_h)^{-1}\zm_g=\zn_h$, which in the lack of associativity is a contradiction with the composability condition (\ref{lr2}). Eventually, we conclude that the standard way of defining the multiplication in $\T^*G$ does not give a well-defined product and that is why there is not a canonical quasiloopoid(loopoid) structure on the cotangent bundle $\T ^*G$ of a smooth quasiloopoid(loopoid) $G$.

Although the cotangent bundles of quasiloopoid(loopoid) have no canonical structures of quasiloopoid(loopoid), we still have good candidates for the source and the target maps $\tilde\alpha$ and $\tilde\beta$,
	$$\tilde\alpha,\tilde\beta:\T ^*G\to A^*G\,,$$ where  $A^\ast G$ is the dual bundle of the normal bundle $AG=\nu (G,M)$, defined by
	 \begin{equation}\label{eq:cotangent:groupoid}
	\kern-15pt
	\begin{array}{l}
	\langle\tilde\beta(\mu _g),X_{\beta(g)}\rangle=\langle\mu _g,\lvec{X}(g)
	\rangle\,, \quad\mu _g\in
	\T^\ast _gG \,,\quad X_{\beta(g)}\in A_{\beta(g)}G\,, \\[5pt]
	
	\langle\tilde\alpha (\nu
	_h),Y_{\alpha (h)}\rangle=\langle\nu _h,\rvec{Y}(h)\rangle\,,
	\quad\nu _h\in \T^\ast _hG\,,\quad Y_{\alpha (h)}\in A_{\alpha(h)}G\,.
	\end{array}
	\end{equation}
	 \begin{proposition}
For any smooth quasiloopoid $G$ the maps $\tilde\alpha,\tilde\beta:\T ^*G\to A^*G$ are smooth surjective submersions.
\end{proposition}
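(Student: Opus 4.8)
The plan is to recognise $\tilde\beta$ (and symmetrically $\tilde\alpha$) as a fibrewise-linear bundle map covering the submersion $\beta$, to check that it is fibrewise surjective, and then to invoke a general lemma forcing such a map to be a surjective submersion. First I would observe that, by the very definition of the left prolongation, $\lvec{X}(g)=(\T_{\beta(g)}l_g)(\bar X^{\alpha}(\beta(g)))$ depends only on the value $X_{\beta(g)}\in A_{\beta(g)}G$ and not on any extending section, so formula (\ref{eq:cotangent:groupoid}) genuinely defines a linear functional $\tilde\beta(\mu_g)\in A^*_{\beta(g)}G$. In particular $\tilde\beta$ carries the fibre $\T^*_gG$ into $A^*_{\beta(g)}G$, i.e. it covers $\beta$. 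Writing $L_g\colon A_{\beta(g)}G\to\T_gG$, $X\mapsto\lvec{X}(g)$, for the linear map appearing in the pairing, Proposition \ref{preg} says precisely that $L_g$ is injective, and a direct check gives $\tilde\beta|_{\T^*_gG}=L_g^*$, the transpose of $L_g$.

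For smoothness and fibrewise surjectivity I would work with a local frame $(X_1,\dots,X_r)$ of $AG$ over an open set $U\subset M$, where $r=\rk(AG)=\dim G-\dim M$, with dual frame $(X^1,\dots,X^r)$ of $A^*G$. By the theorem that the fundamental vector fields are smooth, the $\lvec{X_i}$ are smooth vector fields on $\beta^{-1}(U)$, and over this set
$$\tilde\beta(\mu_g)=\sum_{i=1}^r\langle\mu_g,\lvec{X_i}(g)\rangle\,X^i_{\beta(g)}\,,$$
whose coefficients are manifestly smooth (linear in $\mu_g$, with smooth dependence on $g$); hence $\tilde\beta$ is smooth. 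Injectivity of $L_g$ means $\lvec{X_1}(g),\dots,\lvec{X_r}(g)$ are linearly independent in $\T_gG$, so $\mu_g\mapsto(\langle\mu_g,\lvec{X_i}(g)\rangle)_i$ is onto $\R^r$; equivalently $L_g^*=\tilde\beta|_{\T^*_gG}$ is surjective. Since $\beta$ is surjective, choosing for each $\xi\in A^*_mG$ some $g$ with $\beta(g)=m$ (for instance $g=m$) shows $\tilde\beta$ is globally surjective onto $A^*G$.

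It remains to upgrade fibrewise surjectivity to the submersion property, and here I would isolate the elementary fact that a smooth map $\Phi\colon E\to F$ between vector bundles $E\to X$, $F\to Y$ covering a submersion $b\colon X\to Y$, which is fibrewise linear and fibrewise surjective, is itself a submersion. At $e\in E_x$ one splits the target: the projection $\T_{\Phi(e)}F\to\T_{b(x)}Y$ has kernel the vertical space, canonically $F_{b(x)}$, and $\T_e\Phi$ hits all of $F_{b(x)}$ because on vertical vectors it is just the surjection $\Phi_x\colon E_x\to F_{b(x)}$, while $\T\pi_F\circ\T_e\Phi=\T b\circ\T\pi_E$ is onto as a composite of submersions. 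Given $v\in\T_{\Phi(e)}F$ one first finds $w_1$ with $\T\pi_F(\T_e\Phi(w_1))=\T\pi_F(v)$, then corrects by a vertical $w_2$ with $\T_e\Phi(w_2)=v-\T_e\Phi(w_1)$, so $\T_e\Phi$ is surjective. Applying this with $E=\T^*G$, $F=A^*G$, $b=\beta$ and $\Phi=\tilde\beta$ yields the claim; the argument for $\tilde\alpha$ is identical, using the right prolongations $\rvec{Y}$ and the right-hand assertion of Proposition \ref{preg}.

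The only genuinely non-formal input is Proposition \ref{preg} (equivalently, that the translations are immersions), which provides the fibrewise injectivity of $L_g$ and hence the fibrewise surjectivity of $\tilde\beta$; everything else is the bookkeeping of the bundle lemma. I expect the main point to watch is simply that the pairing in (\ref{eq:cotangent:groupoid}) is well defined on the normal bundle $AG$ rather than on $\T G|_M$, which is exactly what makes $\tilde\beta$ and $\tilde\alpha$ descend to maps into $A^*G$.
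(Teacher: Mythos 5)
Your proposal is correct and takes essentially the same approach as the paper: the paper's entire proof is the single remark that the statement is a direct consequence of Proposition \ref{preg}, and your argument is exactly the detailed unpacking of that deduction, identifying $\tilde\beta$ restricted to $\T^*_gG$ with the transpose of the injective map $X\mapsto\lvec{X}(g)$ furnished by Proposition \ref{preg}. The remaining ingredients you supply (smoothness via the smoothness of the fundamental vector fields, and the lemma that a fibrewise-linear, fibrewise-surjective bundle map covering the submersion $\beta$ is a surjective submersion) are precisely the routine bookkeeping the paper leaves implicit.
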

\begin{proof}
This is a direct consequence of Proposition \ref{preg}.
\end{proof}
	\begin{proposition}
		For any quasiloopoid $G\rightrightarrows M$, the cotangent bundle $\T^*G$ is fibred over the dual vector bundle $A^*G$ of associated skew algebroid $AG$ by the two fibrations $\tilde \za$ and $\tilde \zb$ defined by (\ref {eq:cotangent:groupoid}). Thus $\T^*G$ is a \emph{double fibred bundle}.
	\end{proposition}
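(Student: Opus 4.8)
The plan is to obtain the double fibration directly from the two submersions constructed in the preceding proposition, adding only the verification that they interlock with the cotangent projection $\zp_G:\T^*G\to G$ and with the dual-bundle projection $\zp_A:A^*G\to M$. By the preceding proposition, $\tilde\za$ and $\tilde\zb$ are smooth surjective submersions of $\T^*G$ onto $A^*G$, which already exhibits the two announced fibrations; what remains is to record how they cover the base maps, so that the pair genuinely deserves the name of a double fibred bundle.

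First I would pin down the base point of each image. Reading off the defining formulas (\ref{eq:cotangent:groupoid}), $\tilde\zb(\mu_g)$ is paired only with vectors of $A_{\zb(g)}G$, hence $\tilde\zb(\mu_g)\in A^*_{\zb(g)}G$; symmetrically $\tilde\za(\nu_h)\in A^*_{\za(h)}G$. Therefore the two squares
$$
\zp_A\circ\tilde\zb=\zb\circ\zp_G\,,\qquad \zp_A\circ\tilde\za=\za\circ\zp_G
$$
commute, so that $\tilde\zb$ covers $\zb$ and $\tilde\za$ covers $\za$ over $M$.

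Second, I would note that on each cotangent fibre the two maps are linear. Fixing $g$, by Proposition \ref{preg} the assignment $X\mapsto\lvec{X}(g)$ is a linear isomorphism of $A_{\zb(g)}G$ onto its image in $\T_gG$, and the restriction $\tilde\zb|_{\T^*_gG}$ is precisely its transpose, hence a linear surjection $\T^*_gG\to A^*_{\zb(g)}G$; the same holds for $\tilde\za$ using $X\mapsto\rvec{X}(g)$. Thus each of $\tilde\za,\tilde\zb$ is a fibrewise-surjective morphism of vector bundles from $\zp_G:\T^*G\to G$ to $\zp_A:A^*G\to M$, covering $\za$ respectively $\zb$. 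Together with $\zp_G$ this realises $\T^*G$ as a manifold carrying two fibrations over $A^*G$, each compatible with the fibration over $G$, that is, as a double fibred bundle.

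The argument is essentially bookkeeping once Proposition \ref{preg} is available, and I do not anticipate a serious obstacle; the only point demanding care is the identification of the correct base fibres $A^*_{\za(h)}G$ and $A^*_{\zb(g)}G$ of $\tilde\za(\nu_h)$ and $\tilde\zb(\mu_g)$, since it is exactly this that makes the two commuting squares hold and prevents the double structure from degenerating. I would also stress that no product on $\T^*G$ is being asserted here — precisely the datum whose absence was established in the preceding discussion — so the statement remains within the purely fibrational content already secured.
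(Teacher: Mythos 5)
Your proposal is correct and follows the same route the paper takes: the paper states this proposition without a separate proof, treating it as an immediate consequence of the preceding proposition that $\tilde\za,\tilde\zb$ are smooth surjective submersions, which in turn rests on Proposition \ref{preg}. Your additional bookkeeping --- identifying $\tilde\zb(\mu_g)\in A^*_{\zb(g)}G$ and $\tilde\za(\nu_h)\in A^*_{\za(h)}G$, the commuting squares over $\zb$ and $\za$, and the fibrewise identification of $\tilde\zb|_{\T^*_gG}$ with the transpose of the injection $X\mapsto\lvec{X}(g)$ --- simply makes explicit what the paper leaves implicit, and is accurate throughout.
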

	\begin{example}
		Consider a local smooth loop structure $G$ on $\mathbb R$ with the multiplication $x.y=x+y+x^2y$ and the target and source map $\alpha, \beta:G \to \{0\}\,$. The identity element is zero and there is no inverse.

\medskip		
		Let $(x,\dot x)$ be the canonical coordinates on $\T G\,$. We have the tangent maps of the left and right translations being $(\T_yl_x)(y,\dot y)=\dot y(1+x^2)$ and  $(\T_xr_y)(x,\dot x)=\dot x(1+2xy)\,$, respectively. The multiplication on the tangent bundle $\T G$ is therefore
		\[
		(x,\dot x)	\bullet (y,\dot y)=\left(x+y+x^2y\,, \dot x\, (1+2xy)+\dot y\,(1+x^2)\right)\,.
		\]
		The target and the source maps $\tilde \alpha , \tilde \beta: \T^*G \to (AG)^*=\mathbb R^*$ are defined by
		\[
		\begin{array}{rcl}
		\tilde \beta(x,p)(\dot y)&=&\left\langle p,\T_0l_x(\dot y)\right\rangle=p\,\dot y(1+x^2)\,,\\[4pt]
		\tilde \alpha(y,q)(\dot x)&=&\left\langle q,\T_0r_y(\dot x)\right\rangle=q\,\dot x\,.
		\end{array}
		\]
		Hence, $\tilde \beta(x,p)=p(1+x^2)$ and $\tilde \alpha(y,q)=q\,$.

	\end{example}

	%%%%%%%%%%%%%%%%%%%%%%%%%%%%%%%%%%%%%%%%%%%%%%%%%%
	\section{Discrete Lagrangian  mechanics on loopoids}
	In this section, we introduce Lagrangian mechanics on smooth loopoids. Let us first recall the discrete Lagrangian mechanics on Lie groupoids. Let $G\rightrightarrows M$ be a Lie groupoid, $\alpha, \beta : G \to M$ being its target and source maps, with a multiplication map $m : G^{(2)} \to G\,$, where $G^{(2)} = \{(g,h) \in G\times G| \; \beta(g) = \alpha(h)\}\,$. Denote its corresponding Lie algebroid by $AG$ represented by the normal bundle $\zn(M)=\sT G_{|M}/\sT M$ to the submanifold of units $M\subset G\,$. Sections of $AG$ are represented by the left-invariant $\lvec X$ (or right-invariant $\rvec X$) vector fields on $G$ associated with $X\in\Sec(AG)\,$. Discrete mechanics on a groupoid $G$ for a smooth, real-valued function $L$ on $G$ is defined as follows \cite{weinstein}.
	
	Let $L^{(2)}$ be the restriction to the set of composable pairs $G^{(2)}$ of the function $(g,h) \to L(g) + L(h)$ and $\Sigma_L \subset G^{(2)}$ be the set of critical points of $L^{(2)}$ along the fibers of the multiplication map $m$; that is, the points in $\Sigma_L$ are stationary points of the function $L(g)+L(h)$ when $g$ and $h$ are restricted to admissible pairs with the constraint that the product $gh$ is fixed. Variations of the constraint are of the form $(gu,u^{-1}h)\in G^{(2)}$.
	
	A solution of the Hamilton principle for the Lagrangian function $L$ is a sequence $...,g_{-2},g_{-1},g_0,g_1,g_2,...$ of elements of $G$\,, defined on some “interval” in $\mathbb Z$\,, such that $(g_i,g_{i+1}) \in \Sigma_L$ for each $i$\,.
	%The Hamiltonian formalism for discrete Lagrangian systems is based on the fact that each Lagrangian submanifold of a symplectic groupoid (see \cite{We}) determines a Poisson automorphism on the base Poisson manifold. Recall that the cotangent bundle $\sT ^*G$ is, in addition to being a symplectic manifold, a Lie groupoid itself, the base being $A^*G$; notice that both manifolds are naturally Poisson. The target and source mappings $\tilde\alpha , \tilde\beta : \sT^* G \to A^*G$ are Poisson maps induced by $\alpha$ and $\beta$.
	
%	In \cite{MMS}, the authors showed that Lagrangian submanifolds of symplectic groupoids give rise to discrete dynamical system.
	
	Discrete Euler-Lagrange equations on Lie groupoids can be derived from the variational principle. In \cite{MMM},  the discrete Euler-Lagrange equations take the form
	$$
	\lvec X(g_i)(L)-\rvec X(g_{i+1}) (L)=0
	$$
	on the Lie groupoid $G\rightrightarrows M$, for every section $X$ of $AG$. Note that $(g_i,g_{i+1})\in G^{(2)}$ and the left and right arrow denotes the right and left-invariant vector field on $G$ associated with $X\in \Sec(AG)$ understood as a section on the normal bundle.
	
	In our previous work \cite{GR} on the category of smooth loops, we indicate that because of the lack of associativity there is not clear variations like what we have in Lie groups. But still we can define the discrete Euler-Lagrange equations using the smooth prolongation of vector fields. It can be extended to the definition of Euler-Lagrange equations on loopoids.
	
	\begin{definition}
		The \emph{discrete Euler-Lagrange equation} for a discrete Lagrangian system on a smooth quasiloopoid $G\rightrightarrows M$ with Lagrangian $L:G\to \mathbb R$ is given by
		\begin{equation}\label{ELE.}
		\lvec X(L)(g_i)-\rvec X (L)(g_{i+1})=0,\quad( g_i,g_{i+1})\in G^{(2)}\,.
		\end{equation}
		for every $X\in A G$\,, where $\lvec X$ and $\rvec X$ are the left and right prolongation, respectively.
		
		A sequence $g_1,g_2,...$ of elements $G^{(2)}$ is a solution of the Euler-Lagrange equations if $g_i$ and $g_{i+1}$ satisfy (\ref{ELE.})  for $i=1,2,\dots$\, .
	\end{definition}
		
		The \emph{discrete Euler-Lagrange operator} $$DL(g,h): G^{(2)} \to A^*G\,,$$ where $A^*G$ is the dual of the skew algebroid $AG$,  is given by
		\[
		DL(g,h)(X)=	\lvec X(L)(g)-\rvec X (L)(h)\,.
		\]
		
A smooth mapping  $\gamma:G\to G$ is called a \emph{discrete flow} or \emph{discrete Lagrangian evolution operator} for a discrete Lagrangian $L$ if the graph

$$\graph(\gamma)=\{(g,\gamma(g))\,|\, g\in G\}$$
is a subset of $G^{(2)}$ and $(g,\gamma(g))$ is a solution of the discrete Euler-Lagrange equation for $L$ for all $g\in G$\,.
Equivalently, $	DL(g,\gamma(g))=0$ for all $g\in G$\,.
		
	We have the discrete Legendre transformation for a smooth loopoid $G$ which is similar to what we have for Lie groupoids \cite{MMS}.	
	Given a Lagrangian $L:G\to \mathbb R$ on a smooth loopoid $G\rightrightarrows M$ with the almost algebroid $AG$\,, we have two \emph{discrete Legendre transformations}

$$\mathbb F^+L=\tilde \zb\circ \xd L:G\to A^*G\quad\text{and}\quad\mathbb F^-L=\tilde \za\circ \xd L:G\to A^*G\,,$$ where $\xd L:G\to \sT ^*G$, and $\tilde \za$ and $\tilde \zb$ are the two fibrations of the double fiberd bundle $\sT^*G$ defined by
	\[
	\mathbb F^+L(g)(X)=\lvec X (L)(g)\,,\quad \mathbb F^-L(g)(X)=\rvec X (L)(g)\,,
	\]
 for $X\in\Sec(AG)$\,.
	Directly from the definitions, we get the following.
 \begin{proposition}\label{p2}
		A map $\gamma:G\to G$ is a discrete flow for the Lagrangian $L:G\to\R$ if and only if
		$$
		\mathbb F^-L\circ\zg=\mathbb F^+L \,.
		$$
	\end{proposition}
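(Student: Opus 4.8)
The plan is simply to unwind the two sides and observe that they encode one and the same fiberwise condition, so the statement is essentially definitional. First I would recall that, by definition, $\zg$ is a discrete flow precisely when $\graph(\zg)\subset G^{(2)}$ and $DL(g,\zg(g))=0$ for every $g\in G$. The inclusion $\graph(\zg)\subset G^{(2)}$ means $\zb(g)=\za(\zg(g))$, which is exactly what makes the pair $(g,\zg(g))$ composable; it also forces the identification $A_{\zb(g)}G=A_{\za(\zg(g))}G$, so that $\lvec{X}(g)$ and $\rvec{X}(\zg(g))$ are evaluated on one and the same fiber of $AG$. This bookkeeping of base points is the only point that genuinely needs care, and I would make it explicit at the outset.

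Next I would rewrite the condition $DL(g,\zg(g))=0$ as an equality of elements of $A^*_{\zb(g)}G$. By the defining formula for the discrete Euler-Lagrange operator, for every $X\in A_{\zb(g)}G$ we have $DL(g,\zg(g))(X)=\lvec{X}(L)(g)-\rvec{X}(L)(\zg(g))$. Using the definitions of the two discrete Legendre transformations, namely $\mathbb F^+L(g)(X)=\lvec{X}(L)(g)$ and $\mathbb F^-L(g)(X)=\rvec{X}(L)(g)$, the second term becomes $\rvec{X}(L)(\zg(g))=\mathbb F^-L(\zg(g))(X)=(\mathbb F^-L\circ\zg)(g)(X)$. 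Hence $DL(g,\zg(g))(X)=\big(\mathbb F^+L(g)-(\mathbb F^-L\circ\zg)(g)\big)(X)$ for all $X$ in the fiber $A_{\zb(g)}G$.

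Finally I would conclude by the fact that a covector vanishes if and only if it annihilates every $X$ in its fiber. Thus $DL(g,\zg(g))=0$ holds if and only if $\mathbb F^+L(g)=(\mathbb F^-L\circ\zg)(g)$ as elements of $A^*_{\zb(g)}G$; and since this is required to hold for every $g\in G$, it is equivalent to the single identity $\mathbb F^-L\circ\zg=\mathbb F^+L$ of maps $G\to A^*G$. There is no real obstacle beyond the base-point bookkeeping noted above: once one checks that the composability constraint $\graph(\zg)\subset G^{(2)}$ is precisely what allows $\mathbb F^+L(g)$ and $\mathbb F^-L(\zg(g))$ to be tested against the common fiber $A_{\zb(g)}G$, the equivalence follows directly from the definitions.
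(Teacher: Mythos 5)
Your proof is correct and matches the paper, which offers no argument beyond the remark that the proposition follows ``directly from the definitions'': unwinding $DL$, $\mathbb F^{+}L$ and $\mathbb F^{-}L$ fiberwise is exactly the intended reasoning. Your explicit base-point bookkeeping (that equality in the bundle $A^{*}G$ forces $\zb(g)=\za(\zg(g))$, so the identity $\mathbb F^{-}L\circ\zg=\mathbb F^{+}L$ already encodes $\graph(\zg)\subset G^{(2)}$) is the one point the paper leaves tacit, and you handle it properly.
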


\begin{definition}
	 A discrete Lagrangian $L:G\to \mathbb R$ on a smooth loopoid $G\rightrightarrows M$ is called \emph{regular} if the Legendre transformation $\mathbb F^+L$ is a local diffeomorphism in a neighborhood of $M$\,. If $\mathbb F^+L$  is global diffeomorphism, $L$ is called to be \emph{hyperregular}.
	\end{definition}
\begin{theorem}\label{tl1} For an inverse smooth loopoid $G\rightrightarrows M$ the following are equivalent:
		\begin{itemize}
			\item A discrete Lagrangian $L:G\to \mathbb R$ on smooth loopoid $G\rightrightarrows M$ is regular;
			\item $\mathbb F^-L$ is a local diffeomorphism;
		\end{itemize}
		Moreover, $L$ is hyperregular if and only if $\mathbb F^- L$ is a global diffeomorphism. In this case the discrete Lagrangian evolution operator is a diffeomorphism.
	\end{theorem}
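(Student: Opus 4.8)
The plan is to reduce both equivalences to the nondegeneracy of a single mixed second derivative of $L$, and to use the inversion $\zi$ as the bridge between the two Legendre maps. First I would fix a local frame $(e_i)$ of $AG$ and work in coordinates $(z,w)$ on $G$ adapted to $\zb$, so that $\zb(z,w)=z$ and the $w$'s parametrise the $\zb$-fibres. Since the base map of $\mathbb F^+L$ is $\tilde\zb$, in such coordinates $\mathbb F^+L(z,w)=\big(z,(\lvec{e_i}L)(z,w)\big)$, whence $\T\mathbb F^+L$ is invertible at $g$ exactly when the $n\times n$ matrix $B^+_g=\big(\rvec{e_j}(\lvec{e_i}L)(g)\big)$ is invertible; here the left prolongations $\lvec{e_i}(g)$ form a basis of $\ker\T_g\za$ and the right prolongations $\rvec{e_j}(g)$ a basis of $\ker\T_g\zb$ (they are $\za$- and $\zb$-vertical, respectively), by Proposition \ref{preg}. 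Symmetrically, $\T\mathbb F^-L$ is invertible at $g$ iff $B^-_g=\big(\lvec{e_i}(\rvec{e_j}L)(g)\big)$ is invertible. Thus regularity of $L$ and local invertibility of $\mathbb F^-L$ become nondegeneracy statements for $B^+$ and $B^-$.

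The key step is that along $M$ these two matrices are transposes. Indeed $(B^+_g)_{ji}-(B^-_g)_{ij}=[\rvec{e_j},\lvec{e_i}]L(g)$, and at a unit $u=\ze(m)$ the left and right fundamental vector fields commute, $[\lvec{X},\rvec{Y}](u)=0$; this is the loopoid analogue of the local-loop identity established earlier and is checked by the same second-order Taylor expansion of the multiplication (it is visible in Example \ref{Ex.loopoid}). Hence $B^+_u=(B^-_u)^{\mathsf T}$, so $B^+_u$ is invertible iff $B^-_u$ is. As invertibility of $\T\mathbb F^{\pm}L$ is an open condition, $\mathbb F^+L$ is a local diffeomorphism on a neighbourhood of $M$ iff $B^+$ is nondegenerate at every unit, iff $B^-$ is, iff $\mathbb F^-L$ is a local diffeomorphism near $M$. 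This gives the equivalence of the first two items.

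For the global statement I would use the I.P. identity $l_g=\zi\circ r_{g^{-1}}\circ\zi$ together with Lemma \ref{IP}, which yield $\lvec X(g)=-\T_{g^{-1}}\zi\big(\rvec X(g^{-1})\big)$ and hence, writing $\tilde L=L\circ\zi$ and letting $\bar\zi$ denote fibrewise multiplication by $-1$ on $A^*G$,
\[
\mathbb F^-L=\bar\zi\circ\mathbb F^+\tilde L\circ\zi .
\]
Because $\zi$ and $\bar\zi$ are diffeomorphisms, $\mathbb F^-L$ is a global diffeomorphism iff $\mathbb F^+\tilde L$ is, so the hyperregular equivalence amounts to the statement that $L$ and $\tilde L$ are simultaneously hyperregular. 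Inversion being an isomorphism of $G$ onto its opposite, this is the expected symmetry, whose infinitesimal version along $M$ is precisely the transpose relation above. Finally, once $L$ is hyperregular both $\mathbb F^+L$ and $\mathbb F^-L$ are diffeomorphisms, so by Proposition \ref{p2} the evolution operator, characterised by $\mathbb F^-L\circ\zg=\mathbb F^+L$, is $\zg=(\mathbb F^-L)^{-1}\circ\mathbb F^+L$, a composition of diffeomorphisms.

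The main obstacle will be the global (hyperregular) equivalence rather than the local one. The transpose identity $B^+_g=(B^-_g)^{\mathsf T}$ holds only at units, because in the nonassociative setting the left and right fundamental fields fail to commute away from $M$, so one cannot conclude pointwise that $\mathbb F^-L$ is a local diffeomorphism wherever $\mathbb F^+L$ is. The global argument must therefore route the bijectivity of $\mathbb F^+L$ to that of $\mathbb F^-L$ through the inversion symmetry $\mathbb F^-L=\bar\zi\circ\mathbb F^+\tilde L\circ\zi$ while separately controlling local invertibility off $M$; making this passage from the infinitesimal comparison at $M$ to a genuinely global statement is the delicate point.
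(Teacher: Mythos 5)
The paper, as it happens, states Theorem \ref{tl1} without any proof, so your proposal must stand on its own. Its local half is essentially sound: reducing invertibility of $\T\,\mathbb F^{\pm}L$ to the matrices $B^{\pm}$ via the block-triangular Jacobian in fibration-adapted coordinates is correct, and for a loopoid the images of $X\mapsto\lvec{X}(g)$ and $X\mapsto\rvec{X}(g)$ do fill $\ker\T_g\za$ and $\ker\T_g\zb$ (Proposition \ref{preg} gives injectivity, the loopoid axioms place the images inside the kernels, and a dimension count finishes). But the crux of that half, the commutation $[\lvec{X},\rvec{Y}]\,|_M=0$ for a loopoid, is asserted rather than proved: the paper establishes it only for local loops, over a single unit. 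The loopoid case is true and your proposed method works, but it is genuinely more involved than the loop computation. In coordinates $(x,u)$ with $\za(x,u)=x$ and $M=\{u=0\}$, the product takes the form $(x,u)\cdot(\zb(x,u),v)=(x,\,u+v+C(x)(u,v)+O(3))$, and $\rvec{Y}$ acquires nontrivial base components, $\rvec{Y}(x,0)=(-B(x)Y(x),\,Y(x))$ with $B(x)=\pa_u\zb(x,0)$; at $u=0$ the bracket vanishes only after the mixed bilinear terms $C(x)(Y,X)$ cancel against each other and the base contribution $-\pa_xX\,(B(x)Y)$ cancels against the $u$-derivative of $X(\zb(x,u))$. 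This computation must be written out; as it stands it is an unproven lemma on which the whole equivalence rests.

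The genuine gap is the one you flag yourself: the hyperregularity clause. Your identity $\mathbb F^-L=\bar\zi\circ\mathbb F^+\tilde L\circ\zi$ with $\tilde L=L\circ\zi$ is correct — it follows from $\lvec{X}=-\zi_*\rvec{X}$, which the paper proves for I.P.\ loopoids — but it compares $\mathbb F^-L$ with the Legendre map of a \emph{different} Lagrangian. Applying the same identity to $\tilde L$ merely returns $\mathbb F^+\tilde L=\bar\zi\circ\mathbb F^-L\circ\zi$, so the inversion symmetry is circular: it cannot by itself convert ``$\mathbb F^+L$ is a global diffeomorphism'' into ``$\mathbb F^-L$ is a global diffeomorphism'' unless $L\circ\zi=L$, and the infinitesimal transpose relation along $M$ gives no global information. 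Since your closing paragraph concedes this passage is unresolved, the ``moreover'' part of the theorem is simply not proven by your proposal; a correct argument needs a new idea (or an extra hypothesis) rather than the route you sketch. Two smaller points are fine as you have them: the second bullet must indeed be read as ``local diffeomorphism on a neighbourhood of $M$'', since away from $M$ the left and right prolongations no longer commute and the two conditions can diverge; and the final assertion is immediate from Proposition \ref{p2}, as $\zg=(\mathbb F^-L)^{-1}\circ\mathbb F^+L$ once both Legendre maps are global diffeomorphisms.
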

	The cotangent bundle $\sT ^*G$ is equipped with a canonical symplectic structure but as we observed the lack of associativity is an obstacle for defining a natural loopoid structure on $\sT^*G$ analogous to the Lie groupoid. Nevertheless, it still is a double fibred bundle. Setting aside the loopoid structure, for any function $L:G\to \mathbb R$ on manifold $G$ the submanifolds $\xd L(G)\subset \sT ^*G$ is a Lagrangian submanifold of the cotangent bundle. The discrete Euler-Lagrange dynamics can be equivalently described as follows.
	\begin{definition}
		Let $G\rightrightarrows M$ be a smooth loopoid and $L$ a discrete Lagrangian function on it. A sequence $ \mu_1,...,\mu_n\in \sT^*G$ satisfies the \emph{discrete Lagrangian dynamics} if $\mu_1,...,\mu_n$ are elements of the lagrangian submanifold  $\xd L(G)$ and they are composable sequence in $\sT ^*G$\,, that is
		$$
		\tilde\beta(\mu_k)= \tilde\alpha(\mu_{k+1})\,,\quad k=1,...,n-1\,.
		$$
	\end{definition}
	
	\begin{theorem}
		Let $G\rightrightarrows M$ be a smooth loopoid equipped with a discrete Lagrangian $L:G\to \mathbb R$\,. Then a sequence $ \mu_1,...,\mu_n\in \sT^*G$ satisfies the discrete Lagrangian dynamics if and only if
		\[
		\mu_k=\xd L(g_k)\quad \mbox{for some} \quad g_k\in G\,,\quad k=1,\dots,n\,,
		\]
 such that	$(g_k,g_{k+1})\in G^{(2)}$	and the discrete Euler-Lagrangian equations $\lvec X(g_k)(L)=\rvec X(g_{k+1})(L)$ are satisfied for $k=1,\dots,n-1$\,.
	\end{theorem}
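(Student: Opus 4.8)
The plan is to reduce the statement to a direct unwinding of the definitions of the two fibrations $\tilde\za,\tilde\zb:\sT^*G\to A^*G$ from (\ref{eq:cotangent:groupoid}) and of the discrete Legendre transformations. The central observation is the pair of identities
\[
\tilde\zb\circ\xd L=\mathbb F^+L\,,\qquad \tilde\za\circ\xd L=\mathbb F^-L\,,
\]
which are immediate from the definitions: for $g\in G$ and $X\in A_{\zb(g)}G$ one has $\langle\tilde\zb(\xd L(g)),X\rangle=\langle\xd L(g),\lvec X(g)\rangle=\lvec X(g)(L)=\mathbb F^+L(g)(X)$, and symmetrically $\langle\tilde\za(\xd L(g)),X\rangle=\rvec X(g)(L)=\mathbb F^-L(g)(X)$. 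First I would record that, since $\xd L$ is a section of the cotangent bundle, the projection $\sT^*G\to G$ restricts to a diffeomorphism of the Lagrangian submanifold $\xd L(G)$ onto $G$; hence each point of $\xd L(G)$ is \emph{uniquely} of the form $\xd L(g)$ for some $g\in G$.

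For the forward implication, assume $\mu_1,\dots,\mu_n$ satisfy the discrete Lagrangian dynamics. As every $\mu_k\in\xd L(G)$, write $\mu_k=\xd L(g_k)$ with $g_k$ uniquely determined. The composability condition $\tilde\zb(\mu_k)=\tilde\za(\mu_{k+1})$ then reads $\mathbb F^+L(g_k)=\mathbb F^-L(g_{k+1})$ as elements of $A^*G$. Because $\mathbb F^+L(g_k)\in A^*_{\zb(g_k)}G$ while $\mathbb F^-L(g_{k+1})\in A^*_{\za(g_{k+1})}G$, this equality first forces the base points to coincide, $\zb(g_k)=\za(g_{k+1})$, that is $(g_k,g_{k+1})\in G^{(2)}$; evaluating both sides on an arbitrary $X\in A_{\zb(g_k)}G$ then yields $\lvec X(g_k)(L)=\rvec X(g_{k+1})(L)$ for $k=1,\dots,n-1$, which are precisely the discrete Euler--Lagrange equations.

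The converse runs the same chain in reverse. Given $\mu_k=\xd L(g_k)$ with $(g_k,g_{k+1})\in G^{(2)}$ and the Euler--Lagrange equations satisfied, each $\mu_k$ lies in $\xd L(G)$ by construction. Writing $m=\zb(g_k)=\za(g_{k+1})$, composability of the pair guarantees that $\tilde\zb(\mu_k)=\mathbb F^+L(g_k)$ and $\tilde\za(\mu_{k+1})=\mathbb F^-L(g_{k+1})$ both lie in the common fibre $A^*_mG$. The Euler--Lagrange equations assert that these two covectors agree on every $X\in A_mG$, hence they are equal; this is exactly the condition $\tilde\zb(\mu_k)=\tilde\za(\mu_{k+1})$, so the sequence satisfies the discrete Lagrangian dynamics. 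The whole argument is essentially definition-chasing, and the only step demanding a little care is the fibre-matching one: equality of $\tilde\zb(\mu_k)$ and $\tilde\za(\mu_{k+1})$ in $A^*G$ must be exploited twice, once as equality of base points in $M$ (to obtain $(g_k,g_{k+1})\in G^{(2)}$) and once as equality along the fibre $A^*_mG$ (to obtain the Euler--Lagrange equations), and conversely both pieces of data are needed to reconstruct the equality in $A^*G$.
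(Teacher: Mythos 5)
Your proposal is correct and follows essentially the same route as the paper, whose proof consists precisely of invoking the identities $\mathbb F^+L=\tilde\zb\circ\xd L$ and $\mathbb F^-L=\tilde\za\circ\xd L$ and deferring the remaining definition-chasing to the reference [MMS]. You have simply written out in full the details the paper leaves to that citation, including the correct observation that the equality $\tilde\zb(\mu_k)=\tilde\za(\mu_{k+1})$ in $A^*G$ encodes both the composability condition $\zb(g_k)=\za(g_{k+1})$ (via base points) and the Euler--Lagrange equations (via the fibre $A^*_mG$).
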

	\begin{proof}
		It is enough to consider the discrete Legendre transforms of $L$ as $\mathbb F^+L= \tilde \beta \circ \xd L\,,$ $\mathbb F^-L= \tilde\alpha \circ \xd L:G\to \mathfrak g^*$. For more details consult \cite{MMS}.	
		
	\end{proof}
	\begin{example}
		Consider the smooth loopoid $G:=H\times \mathbb R^2\times \mathbb R^2$ over $M=\{(0,t,t)| t\in \mathbb R^2\}$ defined in Example \ref{Ex.loopoid}. Let $\{x_1,x_2,x_3,x_4,x_5,x_6\}$ be local coordinates for $G$\,. For the basis $X_i\in AG$ ($i=1,...,4$) of sections of $AG$ as in Example \ref{Ex.loopoid}, we have
		\beas
	&\lvec {X_1}(g)=\pa_{x_1}+x_2\pa_{x_2}\,, \quad \lvec {X_2}(g)=x_1\pa_{x_1}+\pa_{x_2}\,,\quad
	\lvec {X_3}(g)=\pa_{x_5}\,,\quad \lvec {X_4}(g)=\pa_{x_6}\,,\\
	& \rvec {X_1}(g)=(1+x_2)\pa_{x_1}\,, \quad \rvec {X_2}(g)=(1+x_1)\pa_{x_2}\,,\quad
	\rvec {X_3}(g)=\pa_{x_3}\,, \quad \rvec {X_4}(g)=\pa_{x_4}\,.
	\eeas
 If we take the Lagrangian function $$L=\sum_{i=1}^6\frac{ x_i^2}{2}$$ as the `total kinetic energy' of the system, then, for composable $g=(x_1,x_2,x_3,x_4,x_5,x_6)$ and $h=(x'_1,x'_2,x_5,x_6,x'_3,x'_4)$\,, the discrete Euler-Lagrange equations read
 \[
 x_1+x_2^2=(1+x'_2)x'_1\,, \quad x_1^2+x_2=(1+x'_1)x'_2\,,\quad x_5=x'_3\,,\quad x_6=x'_4\,.
 \]	
One can see that each sequence of composable pairs $g_i,g_{i+1},...$ of the form
\[
g_i=(x_1^i,x_2^i,a,b,c,d)\,,\quad g_{i+1}=(x_1^{i+1},x_2^{i+1},c,d,c,d)\,,\quad a,b,c,d\in\mathbb R\,,
\]
where
\[
 x^i_1+(x^i_2)^2=(1+x^{i+1}_2)x^{i+1}_1\,, \quad (x_1^i)^2+x^i_2=(1+x^{i+1}_1)x^{i+1}_2\,,
\]		
is a solution of the discrete Euler-Lagrange equations. For instance, if $x_1^i=1$ and $x_2^i=2$\,, then we get the following sequence of composable pairs
\beas &(1,2,a,b,c,d)\,,\left(\frac{1}{2}(1+\sqrt{21})\,,\frac{1}{2}(\sqrt{21}-3),c,d,c,d\right)\,,\\
&\left(\frac{3}{2}-\sqrt{21}+\frac{1}{2}\sqrt{125-16\sqrt{21}}\,,-\frac{5}{2}+
\sqrt{21}+\frac{1}{2}\sqrt{125-16\sqrt{21}},c,d,c,d\right)\,,\dots\,.
\eeas

To check whether  the Lagrangian $L$ is regular or hyperregular, we need to find the Legendre maps associated with $L$\,. The Legendre transformations are
\beas
	&\mathbb F^+L(g)=( x_1+x_2^2)X^1+(x_1^2+x_2)X^2+x_5 X^3+x_6X^4\,,\\
&\mathbb F^-L(g)=(x_1+x_2x_1)X^1+(x_2+x_1x_2)X^2+x_3 X^3+x_4X^4\,,
\eeas
where $(X^i)$ is a basis of sections of $A^*G$ dual to $(X_i)$\,. We have
\[
 \mathbb F^-L(\za(g))=\mathbb F^+L(\za(g))= x_3X^3+x_4X^4\,,\quad \za(g)=(0,0,x_3,x_4,x_3,x_4)\,.
\]
 Taking the derivative at the identity element $\za(g)$\,, we get
\[
\sT_{\za(g)}\mathbb F^+L(\pa_{x_1})=\frac{\xd}{\xd t}\big|_{t=0}\mathbb F^+L(t,0,x_3,x_4,x_3,x_4)=\frac{\xd}{\xd t}\big|_{t=0}(t\,X^1+t^2\,X_2+x_3\, X^3+x_4\,X^4)=X^1\,.
\]
Analogously,
\[
\sT_{\za(g)}\mathbb F^+L(\pa_{x_2})=\frac{\xd}{\xd t}\big|_{t=0}\mathbb F^+L(0,t,x_3,x_4,x_3,x_4)=\frac{\xd}{\xd t}\big|_{t=0}(t^2\,X^1+t\,X_2+x_3\, X^3+x_4\,X^4)=X^2\,.
\]
 By similar calculations we get
$$\sT_{\za(g)}\mathbb F^+L(\pa_{x_i})=\sT_{\zb(g)}\mathbb F^+L(\pa_{x_i})=0$$
for $i=3,4$ and
$$\sT_{\za(g)}\mathbb F^+L(\pa_{x_i})=\sT_{\zb(g)}\mathbb F^+L(\pa_{x_i})=X^{i-2}$$
for $i=5,6$\,.
Analogously,
\[
\sT_{\za(g)}\mathbb F^-L(\pa_{x_i})=X^i
\]
for $i=1,2,3,4$ and
 \[
\sT_{\za(g)}\mathbb F^-L(\pa_{x_i})=0
 \]
for $i=5,6$\,.

\medskip\noindent  This means that the Legendre transformations are local diffeomorphism at the neighborhood of identity elements along the fibers, which implies the Lagrangian $L$ is regular. It can be also computed by mathematical software such as Maple that as a discrete flow $\gamma:G\to G$ for the Lagrangian $L$ can be taken the map
			\[
\gamma(x,y,a,b,c,d)=(z,w,c,d,c,d)\,,		
			\]
such that
$$
z=-\frac{1}{2}\left(1+x+y+x^2+y^2+\sqrt{B}\right)\,,
$$
$$w=\frac{1+x+y+y^2+x^2-\sqrt{B}}{-1+x-y-x^2+y^2+\sqrt{B}}\,,
$$
	
where
\[
B=1+2(x+y-yx+x^2y+xy^2-x^2y^2-x^3-y^3)+3(x^2+y^2)+x^4+y^4.
\]

	\end{example}
	
	%As a final result of three above statements:
	%\begin{itemize}
	%	\item Let $G \rightrightarrows M$ be a symplectic loopoid. Its skew algebroid can be identified with the skew algebroid structure $\pi_{M}:T^*M\to M$ of cotangent bundle to the Leibniz manifold $(M,\Lambda)$.
	
\section{Conclusions}
We have introduced the categories of smooth quasiloopoids and loopoids as nonassociative analogs of Lie groupoids. We have shown that a big part of the Lie theory of Lie groupoids (e.g. Lie functor) can be generalized to this nonassociative case. In particular, infinitesimal parts of smooth loopoids have been recognized as skew algebroids.
We have not obtained ternary structures on these skew algebroids producing `Akivis algebroids'. This question we postpone to a forthcoming article.
We have studied the properties of the tangent and cotangent bundles of smooth loopoids showing that the tangent bundles are smooth loopoids themselves. Smooth quasiloopoids and loopoids provide also a geometric framework for discrete Lagrangian mechanics.

One can also use quasiloopoids and loopoids as a playground for information geometry (cf. \cite{Amari,GGKM}). The contrast (potential) function
is in this picture a function $F:G\ra \R$ on the quasiloopoid/loopoid $G\rightrightarrows M$ with vanishing first jets along $M$. This defines a quasi-Riemannian metric $g$ (an analog of the Fisher-Rao metric) on the corresponding skew algebroid $AG$\,, $$g(X,Y)=\overset{\leftarrow}{X}\overset{\leftarrow}{Y}(F)_{|M}\,.$$
Since linear $AG$-connections and the Levi-Civita connection are well-defined even for skew-algebroids, we can then try to find the dualistic pair of such connections, the difference
of which will define a symmetric covariant three-tensor on $AG$\,. This should lead to `information geometry on loopoids'.
	
	%%%%%%%%%%%%%%%%%%%%%%%%%%%%%%%%%%%%%%%%%%%%%%%%%%%%%%%%%%%%%%%%

	\small{\vskip1cm
		
		\noindent Janusz GRABOWSKI\\ Institute of
		Mathematics\\  Polish Academy of Sciences\\ \'Sniadeckich 8, 00-656 Warszawa, Poland
		\\Email: jagrab@impan.pl \\
		
		\noindent Zohreh RAVANPAK\\ Institute of
		Mathematics\\  Polish Academy of Sciences\\ \'Sniadeckich 8, 00-656 Warszawa, Poland
		\\Email: zravanpak@impan.pl \\

	\end{document}